\newtheorem{theorem}{Theorem}[section]
\newtheorem{lemma}[theorem]{Lemma}
\newtheorem{proposition}[theorem]{Proposition}
\newtheorem{corollary}[theorem]{Corollary}
\theoremstyle{definition}
\newtheorem{definition}[theorem]{Definition}
\theoremstyle{remark}
\newtheorem{remark}[theorem]{Remark}
\numberwithin{equation}{section}
\def\R{\mathbb R}
\def\C{\mathbb C}
\def\Z{\mathbb Z}
\def\N{\mathbb N}
\def\supp{\text{supp}}
\def\({\left(}
\def\){\right)}
\def\[{\left[}
\def\]{\right]}
\def\<{\left<}
\def\>{\right>}
\def\less{\lesssim}
\newcommand{\norm}[2]{\left|\left|#1\right|\right|_{#2}}
\begin{document}

\title{A Bilinear T(b) Theorem for Singular Integral Operators}

\author{Jarod Hart}
\address{Department of Mathematics, University of Kansas, Lawrence, Kansas 66044}
\curraddr{Department of Mathematics,
Wayne State University, 656 W. Kirby, Detroit, Michigan 48202}
\email{jarod.hart@wayne.edu}
\thanks{Research supported in part by NSF Grant \#DMS1069015.}

\subjclass[2010]{Primary 42B20; Secondary 42B25}

\date{\today}

\dedicatory{ }

\keywords{Square Function, Littlewood-Paley, Bilinear, Calder\'on-Zygmund, Tb Theorem}

\begin{abstract}
In this work, we present a bilinear Tb theorem for singular integral operators of Calder\'on-Zygmund type.  We prove some new accretive type Littlewood-Paley results and construct a bilinear paraproduct for a para-accretive function setting.   As an application of our bilinear Tb theorem, we prove product Lebesgue space bounds for bilinear Riesz transforms defined on Lipschitz curves.
\end{abstract}

\maketitle

\section{Introduction}

In the development of Calder\'on-Zygmund singular integral operator theory, measuring cancellation of operators via testing conditions has become a central theme through T1 and Tb theorems.  In the 1980's, David-Journ\'e \cite{DJ} proved the original T1 theorem, which gave a characterization of $L^2$ boundedness for Calder\'on-Zygmund operators.  Driven by the Cauchy integral operator, in the late 1980's David-Journ\'e-Semmes \cite{DJS} and McIntosh-Meyer \cite{McM} proved Tb theorems, which are also characterizations of $L^2$ bounds for Calder\'on-Zygmund operators based on perturbed testing conditions; see also \cite{Chr}.  We state the version from \cite{DJS} to compare to the bilinear version we present in this work.\\

\vspace{-.2cm}

\noindent{\bf Tb Theorem.} Let $b_0,b_1$ be para-accretive functions.  Assume that $T$ is a singular integral operator of Calder\'on-Zygmund type associated to $b_0,b_1$.  Then $T$ can be extended to a bounded operator on $L^2$ if and only if $M_{b_0}TM_{b_1}$ satisfies the weak boundedness property and $M_{b_0}T(b_1),M_{b_1}T^*(b_0)\in BMO$.\\

\vspace{-.2cm}

It should be noted here that we use slightly different notation than was used in \cite{DJS}.  We require here that $M_{b_0}T(b_1),M_{b_1}T^*(b_0)\in BMO$, whereas in \cite{DJS} this condition was written $T(b_1),T^*(b_0)\in BMO$.  These two conditions are equivalent, only using different notation.  Later we discuss why we use a different notation; see Remark \ref{r:BMOnotation}.

From the late 1980's to the early 2000's, multilinear Calder\'on-Zygmund theory was developed and { multilinear T1 theorems and boundedness results were obtained} by Christ-Journ\'e \cite{CJ} and Grafakos-Torres \cite{GT1}. { Many analogs of the linear theory have been found in the multilinear setting,} but to date there has been no multilinear Tb theorem.  In this work we prove a bilinear Tb theorem (which can be naturally extended also to a higher degree of multilinearity).  The proof presented in this work does not rely on the linear Tb theorem of David-Journ\'e-Semmes \cite{DJS}, McIntosh-Meyer \cite{McM}, or Christ \cite{Chr}.  Furthermore a new proof of the linear Tb theorem can be easily extracted from the work in this paper.  { We state our main result.}

\begin{theorem}\label{t:fullTb}
Let $b_0,b_1,b_2$ be para-accretive functions.  Assume that $T$ is a bilinear singular integral operator of Calder\'on-Zygmund type associated to $b_0,b_1,b_2$.  Then $T$ can be extended to a bounded operator from $L^{p_1}\times L^{p_2}$ into $L^p$ for all $1<p_1,p_2<\infty$ satisfying $\frac{1}{p_1}+\frac{1}{p_2}=\frac{1}{P1}$ if and only if $M_{b_0}T(M_{b_1}\,\cdot\,,M_{b_2}\,\cdot\,)$ satisfies the weak boundedness property and $M_{b_0}T(b_1,b_2),M_{b_1}T^{*1}(b_0,b_2),M_{b_2}T^{*2}(b_1,b_0)\in BMO$.
\end{theorem}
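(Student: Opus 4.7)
The plan is to follow the classical $T(b)$ architecture adapted to the bilinear para-accretive setting: prove necessity by direct testing, and prove sufficiency by subtracting three bilinear paraproducts encoding the $BMO$ data and bounding the remaining operator via the Calder\'on reproducing formula together with almost-orthogonality estimates.

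For \textbf{necessity}, once $T$ extends boundedly $L^{p_1}\times L^{p_2}\to L^p$, the weak boundedness property for $M_{b_0}T(M_{b_1}\,\cdot\,, M_{b_2}\,\cdot\,)$ is immediate from H\"older's inequality and the $L^\infty$ bounds on the $b_i$. To see that $M_{b_0}T(b_1,b_2)\in BMO$, I would pair this distribution against an $H^1$-atom $a$ supported in a cube $Q$ and split $b_i = b_i\chi_{2Q} + b_i\chi_{(2Q)^c}$ in each slot: the fully local piece is controlled by the $L^p$-boundedness of $T$, while the three off-diagonal pieces are controlled by the bilinear Calder\'on-Zygmund kernel regularity. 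The two adjoint testing conditions are handled symmetrically.

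For \textbf{sufficiency}, using the reproducing formula for the triple $(b_0,b_1,b_2)$ developed earlier in the paper, I would decompose
\begin{equation*}
    T(f_1,f_2) = \sum_k Q_k^{b_0}\, T\bigl(S_k^{b_1}f_1,\, S_k^{b_2}f_2\bigr) + \text{(symmetric terms)}
\end{equation*}
and then telescope each $S_k^{b_i}$ into $Q_j^{b_i}$ pieces to isolate three paraproduct-type sums corresponding to the three testing slots, plus a fully diagonal remainder. This motivates constructing bilinear paraproducts $\Pi^0_\beta,\Pi^1_\beta,\Pi^2_\beta$, one for each slot, built from the reproducing-formula pieces, each bounded $L^{p_1}\times L^{p_2}\to L^p$ with norm $\lesssim \|\beta\|_{BMO}$, and designed so that
\begin{equation*}
    M_{b_0}\Pi^0_\beta(b_1,b_2) = \beta, \quad M_{b_1}(\Pi^1_\beta)^{*1}(b_0,b_2) = \beta, \quad M_{b_2}(\Pi^2_\beta)^{*2}(b_1,b_0) = \beta,
\end{equation*}
with the remaining two testing expressions for each paraproduct vanishing. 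Setting
\begin{equation*}
    \widetilde T = T - \Pi^0_{M_{b_0}T(b_1,b_2)} - \Pi^1_{M_{b_1}T^{*1}(b_0,b_2)} - \Pi^2_{M_{b_2}T^{*2}(b_1,b_0)},
\end{equation*}
the operator $\widetilde T$ remains Calder\'on-Zygmund, still satisfies the WBP, and now has all three $BMO$ testing expressions equal to zero. For such a $\widetilde T$ I would expand $\langle \widetilde T(f_1,f_2), f_0\rangle$ using the reproducing formula in all three slots and exploit the zero-testing (which transfers genuine cancellation to each $Q_k^{b_i}$ in the appropriate slot) together with the WBP and kernel bounds to prove Cotlar-Stein type almost-orthogonality, yielding the $L^2\times L^2\to L^1$ bound. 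The full range of exponents then follows by multilinear Calder\'on-Zygmund interpolation.

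The \textbf{main obstacle} is constructing the three bilinear paraproducts with the exact intertwining identities above in the para-accretive (not merely accretive) setting. Because $Q_k^{b_i}b_i$ is not identically zero but only a controlled error, the paraproduct must be assembled so that these errors cancel across slots and produce integrable remainders, and proving $L^{p_1}\times L^{p_2}\to L^p$ boundedness for $\beta\in BMO$ requires the new accretive-type Littlewood-Paley estimates announced in the introduction. This construction, together with the verification that the resulting $\widetilde T$ falls inside a framework where almost-orthogonality can be pushed through, is where the technical heart of the argument lies.
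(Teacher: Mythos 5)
Your proposal follows essentially the same architecture as the paper: necessity via the known $L^\infty\times L^\infty\to BMO$ mapping property, and sufficiency by constructing three bilinear paraproducts with exactly the intertwining identities you state (the paper's Theorem \ref{t:paraproduct}, whose proof indeed hinges on the $H^1$ convergence of the accretive reproducing formula), subtracting them to reduce to the case of vanishing testing conditions, and then bounding the reduced operator by expanding the trilinear form through the reproducing formula and taking geometric means of almost-orthogonality estimates (the paper's Theorems \ref{t:reducedTb} and \ref{t:mdualbound}). The only cosmetic difference is that the paper proves the reduced estimate directly for all Banach exponents $1<p,p_1,p_2<\infty$ via duality rather than at the single point $L^2\times L^2\to L^1$ (which is awkward to reach by duality since $L^1$ is not a dual space), and then invokes Grafakos--Torres to cover $p\leq 1$; otherwise your plan matches the paper's proof.
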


The meaning of $M_{b_0}T(b_1,b_2)\in BMO$ is not { a priori} clear here, but we define this notations in Section 2.  We also define other terminology use through out the work, such as para-accretive, singular integral operators of Calder\'on-Zygmund type, weak boundedness property, etc.

Calder\'on \cite{Ca1} proved some convergence results for a reproducing formula of the form
\begin{align*}
\int_0^\infty\phi_t*\phi_t*f\frac{dt}{t}=f,
\end{align*}
for appropriate functions $\phi_t$, which came to be known as Calder\'on's reproducing formula.  The convergence of Calder\'on's reproducing formula holds in many function space topologies; see for example Calder\'on \cite{Ca1,Ca2}, Janson-Taibleson \cite{JT}, Frazier-Jawerth-Weiss \cite{FJW}, and the references therein.  This formula has since been generalized and reformulated in many ways.  For some general formulations of this Calder\'on reproducing formula, see the work of Coifman \cite{Coi}, Nahmod \cite{N1,N2}, and Han \cite{Ha}.  We will use some of the results from \cite{Ha} in this article.  We consider discrete versions of Calder\'on's formula where we replace convolution with $\phi_t$ with certain non-convolution integral operators indexed by a discrete parameter $k\in\Z$ instead of the continuous parameter $t>0$.  We prove a criterion for extending the convergence of perturbed discrete Calder\'on reproducing formulas from $L^p$ spaces to the Hardy space $H^1$.  More precisely, we will prove:

\begin{theorem}\label{t:H1convergence}
Let $b\in L^\infty$ be para-accretive functions and $\theta_k$ be a collection of Littlewood-Paley square function kernels such that $\Theta_kb=\Theta_k^*b=0$ for all $k\in\Z$.  Also assume that 
\begin{align*}
\sum_{k\in\Z}M_b\Theta_kM_bf=bf
\end{align*}
for any $f\in C_0^\delta$ such that $bf$ has mean zero, where the convergence holds in $L^p$ for some $1<p<\infty$.  If $\phi\in C_0^\delta$ for some $0<\delta\leq1$ such that $b\phi$ has mean zero, then $b\phi\in H^1$ and
\begin{align*}
\sum_{k\in\Z}M_b\Theta_kM_b\phi=b\phi,
\end{align*}
where the convergence holds in $H^1$.
\end{theorem}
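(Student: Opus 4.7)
My plan is to reduce the claim to an absolute convergence statement in $H^1$: setting $\psi_k := M_b\Theta_k M_b\phi$, I will prove block estimates $\norm{\psi_k}{H^1}\less 2^{-|k|\alpha}$ for some $\alpha>0$, so that $\sum_k \psi_k$ converges in $H^1$ to some $g\in H^1$, and then identify $g$ with the known $L^p$-limit $b\phi$ by uniqueness of limits in $L^1_{\mathrm{loc}}$. The target $b\phi$ itself lies in $H^1$ essentially for free: since $b\in L^\infty$ and $\phi\in C_0^\delta$, the product $b\phi$ is bounded and compactly supported in some ball $B_0$, and the mean-zero hypothesis exhibits $b\phi$ (after normalization) as a scalar multiple of a classical $H^1$-atom on $B_0$.

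Each block $\psi_k$ has mean zero by adjoint cancellation, $\int\psi_k(x)\,dx=\int \Theta_k^*b(y)\,b(y)\phi(y)\,dy=0$. For fine scales $k\ge 0$ I would use $\Theta_k b=0$ to write
\begin{equation*}
\psi_k(x) = b(x)\int \theta_k(x,y)\,b(y)\,[\phi(y)-\phi(x)]\,dy;
\end{equation*}
combined with the H\"older regularity $|\phi(y)-\phi(x)|\less |x-y|^\delta$ and the standard size/decay bounds for $\theta_k$, this yields $|\psi_k(x)|\less 2^{-k\delta}$ on $2B_0$ with rapid off-diagonal decay, so that a molecular estimate gives $\norm{\psi_k}{H^1}\less 2^{-k\delta}$. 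For coarse scales $k\le 0$ I would instead exploit the mean-zero of $b\phi$ itself, writing
\begin{equation*}
\Theta_k(b\phi)(x) = \int \big[\theta_k(x,y)-\theta_k(x,y_0)\big]\,b(y)\phi(y)\,dy
\end{equation*}
for any fixed $y_0\in\supp\phi$, and using H\"older smoothness of $\theta_k$ in its second variable together with $|y-y_0|\less R_0$ and $2^k R_0\ll 1$ to obtain $|\psi_k(x)|\less 2^{kn}(2^k R_0)^\gamma$ on a ball of radius $\sim 2^{-k}$ around $B_0$, hence $\norm{\psi_k}{H^1}\less (2^k R_0)^\gamma$.

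Summing the two regimes gives absolute convergence of $\sum_k \psi_k$ in $H^1$. The identification $g=b\phi$ then follows because $H^1$-convergence implies $L^1_{\mathrm{loc}}$-convergence, while the $L^p$-convergence hypothesized in the statement likewise yields $L^1_{\mathrm{loc}}$-convergence to $b\phi$. The main obstacle I expect is cleanly packaging the pointwise size together with the tail decay of each $\psi_k$ into a genuine molecular $H^1$-estimate; the crucial point is that at coarse scales the cancellation of $b\phi$ (not merely of $\psi_k$) must be used, since treating $\psi_k$ as a bounded mean-zero function supported near $B_0$ would only yield $\norm{\psi_k}{H^1}\less |B_0|$, which fails to sum as $k\ra-\infty$.
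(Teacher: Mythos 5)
Your proposal follows the paper's proof almost step for step: define the blocks $\psi_k = M_b\Theta_kM_b\phi$, show each has mean zero via $\Theta_k^*b=0$, use $\Theta_kb=0$ together with the H\"older regularity of $\phi$ at fine scales, use the mean zero of $b\phi$ together with the H\"older regularity of $\theta_k$ in its $y$-variable at coarse scales, sum the resulting $H^1$ bounds, and identify the limit with $b\phi$ via the hypothesized $L^p$ convergence. The one place where you are handwaving -- and where you correctly flag ``the main obstacle'' -- is packaging the pointwise/tail bound for $\psi_k$ into an $H^1$ estimate. The paper resolves this with Lemma~\ref{l:H1functions} (due to Stefanov), which says a mean-zero $f$ with $|f|\less\Phi_j^N+\Phi_k^N$ satisfies $\|f\|_{H^1}\less 1+|j-k|$; applied with $j=0$ this gives $\|\psi_k\|_{H^1}\less(1+|k|)2^{-\gamma|k|}$, not the clean $2^{-\gamma|k|}$ your molecular picture suggests. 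The logarithmic factor arises because the derived profile for $\psi_k$ genuinely mixes two scales ($\Phi_0^N$ from $\supp\phi$ and $\Phi_k^N$ from the kernel), so it is not a single atom at scale $2^{-k}$ as your coarse-scale description implies; your claimed bound $|\psi_k(x)|\less 2^{kn}(2^kR_0)^\gamma$ undershoots the true size $\sim 2^{\gamma k}$ near $B_0$. Nonetheless $(1+|k|)2^{-\gamma|k|}$ is still summable, so the overall strategy -- which is the same as the paper's -- goes through once the block estimate is made precise via that lemma.
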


Here $C_0^\delta=C_0^\delta(\R^n)$ denotes the collection of compactly supported, $\delta$-H\"older continuous functions from $\R^n$ into $\C$.  Also we take the typical definition of the Hardy space $H^1$ with norm $||f||_{H^1}=||f||_{L^1}+\sum_{\ell=1}^n||R_\ell f||_{L^1}$, where $R_\ell$ is the $\ell^{th}$ Reisz transform in $\R^n$ for $\ell=1,...,n$, $R_\ell f=c_n\;p.v.\frac{y_\ell}{|y|^{n+1}}*f$ and $c_n$ is a dimensional constant.  Theorem \ref{t:H1convergence} tells us that anytime we have convergence of Calder\'on's reproducing formula in $L^p$ for some $p$, then it also converges in $H^1$, for appropriate operators and functions.

The need of Theorem \ref{t:H1convergence} to prove Theorem \ref{t:fullTb} comes about in a paraproduct construction used to decompose our bilinear singular integral operator $T$ (as in Theorem \ref{t:fullTb}).  To prove Theorem \ref{t:fullTb}, we follow the ideas in \cite{DJ,DJS,Hart2} to write $T=S+L_0+L_1+L_2$, where $M_{b_0}S(b_1,b_2)=M_{b_1}S^{*1}(b_0,b_2)=M_{b_2}S^{*2}(b_1,b_0)=0$ and $L_0,L_1,L_2$ are bilinear paraproducts.  We construct these paraproducts in Section \ref{s:SIO} so that they satisfy $M_{b_0}L_0(b_1,b_2)=M_{b_0}T(b_1,b_2)$ and $M_{b_1}L_0^{*1}(b_0,b_2)=M_{b_2}L_0^{*2}(b_1,b_0)=0$ in $BMO$; likewise for $L_1$ and $L_2$.  The paraproduct $L_0$ is defined in terms of a generalized Calder\'on type reproducing formula, like the ones described in Theorem \ref{t:H1convergence}.  The $H^1$ convergence given by Theorem \ref{t:H1convergence} implies $BMO$ convergence of the formula by duality when paired with appropriate elements of $H^1$, and eventually this convergence yields $M_{b_0}L_0(b_1,b_2)=M_{b_0}T(b_1,b_2)$ for this paraproduct construction.  See Section \ref{s:SIO} for more details on the construction of these paraproducts and the decomposition $T=S+L_0+L_1+L_2$.

This article is organized the following way:  In Section 2, we set notation and give a few pertinent definitions.  In Section 3, we prove a few almost orthogonality estimates for bilinear Littlewood-Paley square function kernels and operators.  In Section 4, we prove a number of convergence results in various spaces, including the Hardy space $H^1$ convergence stated in Theorem \ref{t:H1convergence}.  In Section 5, we prove an estimate closely related to bilinear Littlewood-Paley square function theory, which will serve as an estimate for truncated Calder\'on-Zygmund operators.  In Section 6, we complete the proof of Theorem \ref{t:fullTb} by proving a reduced Tb theorem and constructing a bilinear paraproduct for the para-accretive perturbed setting.  In Section 7, we apply our bilinear Tb theorem \ref{t:fullTb} to bilinear Riesz transforms defined by principle value operators along Lipschitz curves.

{\bf Acknowledgments:}  We would like to thank Rodolfo Torres and Estela Gavosto for their help in preparing his dissertation \cite{Hart3}, which contains some of the results presented here.  We also thank Atanas Stefanov for informing him of a better result for the $H^1$ estimate in Lemma \ref{l:H1functions} than what we originally obtained, and providing a proof for it.  Finally, we thank the referees and \'Arp\'ad B\'enyi for helpful suggestions that improved this article.

\section{Definitions and Preliminaries}

We first define para-accretive functions as one of several equivalent definitions provided by David-Journ\'e-Semmes \cite{DJS}.

\begin{definition}
A function $b\in L^\infty$ is para-accretive if $b^{-1}\in L^\infty$ and there is a $c_0>0$ such that for every cube $Q$, there exists a sub-cube $R\subset Q$ such that
\begin{align*}
\frac{1}{|Q|}\left|\int_Rb(x)dx\right|\geq c_0.
\end{align*}
\end{definition}
Many results involving para-accretive functions were proved by David-Journ\'e-Semmes \cite{DJS}, McIntosh-Meyer \cite{McM}, and by Han in \cite{Ha}.  We will use a number of the results from those works.

\subsection{Bilinear Singular Integrals Associated to Para-Accretive Functions}

Next we introduce the H\"older continuous spaces and para-accretive perturbed H\"older spaces.  These are the functions spaces that we use to form our initial weak continuity assumption for $T$ in Theorem \ref{t:fullTb}, similar to the linear Tb theorem in \cite{DJS}.

\begin{definition}
Define for $0<\delta\leq1$ and $f:\R^n\rightarrow\C$
\begin{align*}
||f||_\delta=\sup_{x\neq y}\frac{|f(x)-f(y)|}{|x-y|^\delta},
\end{align*}
and the space $C^\delta=C^\delta(\R^n)$ to be the collection of all functions $f:\R^n\rightarrow\C$ such that $||f||_\delta<\infty$.  Also define $C_0^\delta=C_0^\delta(\R^n)$ to be the subspace of all compactly supported functions in $C^\delta$. It follows that $||\cdot||_\delta$ is a norm on $C_0^\delta$.  Despite conventional notation, we will take $C^1$ and $C_0^1$ to be the spaces of Lipschitz continuous functions to keep our notation consistent.  Let $b$ be a para-accretive function and define $bC_0^\delta$ to be the collection of functions $bf$ such that $f\in C_0^\delta$ with norm $||bf||_{b,\delta}=||f||_\delta$.  Also let $(bC_0^\delta)'$ be the collection of all sequentially continuous linear functionals on $bC_0^\delta$, i.e. a linear functional $W:bC_0^\delta\rightarrow\C$ is in $(bC_0^\delta)'$ if and only if
\begin{align*}
\lim_{k\rightarrow\infty}||f_k-f||_\delta=0\text{ where }f_k,f\in C_0^\delta\;\;\Longrightarrow\;\;\lim_{k\rightarrow\infty}\<W,bf_k\>=\<W,bf\>,
\end{align*}
where these are both limits of complex numbers.  Given a topological space $X$, we say that an operator $T:X\rightarrow (bC_0^\delta)'$ is continuous if 
\begin{align*}
\lim_{k\rightarrow\infty}x_k=x\text{ in }X\;\;\Longrightarrow\;\;\lim_{k\rightarrow\infty}\<T(x_k),bf\>=\<T(x),bf\>\text{ for all }f\in C_0^\delta.
\end{align*}
\end{definition}

Given a bilinear operator $T:b_1C_0^\delta\times b_2C_0^\delta\rightarrow (b_0C_0^\delta)'$ for some $\delta>0$, define the transposes of $T$ for $f_1,f_2,_3\in C_0^\delta$
\begin{align*}
\<T^{1*}(b_0f_0,b_2f_2),b_1f_1\>=\<T^{*2}(b_1f_1,b_0f_0),b_1f_1\>=\<T(b_1f_1,b_2f_2),b_0f_0\>.
\end{align*}
Then the transposes of $T$ are bilinear operators acting on the following spaces:  $T^{1*}:b_0C_0^\delta\times b_2C_0^\delta\rightarrow (b_1C_0^\delta)'$ and $T^{2*}:b_1C_0^\delta\times b_0C_0^\delta\rightarrow (b_2C_0^\delta)'$.  One could more generally define the transpose $T^{1*}$ on $(b_1C_0^\delta)''\times b_1C_0^\infty$, but this is not necessary for this work.  So we restrict the first spot of $T^{1*}$ to $b_1C_0^\delta$ instead of $(b_1C_0^\delta)''$.  Likewise for $T^{2*}$.

\begin{definition}
A function $K:\R^{3n}\backslash\{(x,x,x):x\in\R^n\}\rightarrow\C$ is a standard bilinear Calder\'on-Zygmund kernel if
\begin{align*}
&|K(x,y_1,y_2)|\less\frac{1}{(|x-y_1|+|x-y_2|)^{2n}}\text{ when }|x-y_1|+|x-y_2|\neq0\\
&|K(x,y_1,y_2)-K(x',y_1,y_2)|\less\frac{|x-x'|}{(|x-y_1|+|x-y_2|)^{2n+\gamma}},\\
&\hspace{6cm}\text{ when }|x-x'|<\max(|x-y_1|,|x-y_2|)/2\\
&|K(x,y_1,y_2)-K(x,y_1',y_2)|\less\frac{|y_1-y_1'|}{(|x-y_1|+|x-y_2|)^{2n+\gamma}},\\
&\hspace{6cm}\text{ when }|y_1-y_1'|<\max(|x-y_1|,|x-y_2|)/2\\
&|K(x,y_1,y_2)-K(x,y_1,y_2')|\less\frac{|y_2-y_2'|}{(|x-y_1|+|x-y_2|)^{2n+\gamma}},\\
&\hspace{6cm}\text{ when }|y_2-y_2'|<\max(|x-y_1|,|x-y_2|)/2.
\end{align*}
Let $b_0,b_1,b_2\in L^\infty(\R^n)$ be para-accretive functions.  We say a bilinear operator $T:b_1C_0^\delta\times b_2C_0^\delta\rightarrow(b_0C_0^\delta)'$ is a bilinear singular integral operator of Calder\'on-Zygmund type associated to $b_0,b_1,b_2$, or for short a bilinear C-Z operator associated to $b_0,b_1,b_2$, if $T$ is continuous from $b_1C_0^\delta\times b_2C_0^\delta$ into $(b_0C_0^\delta)'$ for some $\delta>0$ and there exists a standard Calder\'on-Zygmund kernel $K$ such that for all $f_1,f_2,f_3\in C_0^\delta$ with disjoint support
\begin{align*}
\<T(M_{b_1}f_1,M_{b_2}f_2),M_{b_0}f_0\>&=\int_{\R^{3n}}K(x,y_1,y_2)\prod_{i=0}^2f_i(y_i)b_i(y_i)dy_i.
\end{align*}
\end{definition}

Note that this continuity assumption for $T$ from $b_1C_0^\delta\times b_2C_0^\delta$ into $(b_0C_0^\delta)'$ is equivalent to the following:  For any $f_0,f_1,f_2,g,g_k\in C_0^\delta$ such that $g_k\rightarrow g$ in $C_0^\delta$, we have
\begin{align*}
&\lim_{k\rightarrow\infty}\<T(M_{b_1}g_k,M_{b_2}f_2),M_{b_0}f_0\>=\<T(M_{b_1}g,M_{b_2}f_2),M_{b_0}f_0\>,\\
&\lim_{k\rightarrow\infty}\<T(M_{b_1}f_1,M_{b_2}g_k),M_{b_0}f_0\>=\<T(M_{b_1}f_1,M_{b_2}g),M_{b_0}f_0\>,\\
&\lim_{k\rightarrow\infty}\<T(M_{b_1}f_1,M_{b_2}f_2),M_{b_0}g_k\>=\<T(M_{b_1}f_1,M_{b_2}f_2),M_{b_0}g\>.
\end{align*}
It follows that the continuity assumptions for a bilinear singular integral operator $T$ associated to para-accretive functions $b_0,b_1,b_2$ is symmetric under transposes.  That is, $T$ is a bilinear C-Z operator associated to $b_0,b_1,b_2$ if and only if $T^{1*}$ is a bilinear C-Z operator associated to $b_1,b_0,b_2$ if and only if $T^{2*}$ is a bilinear C-Z operator associated to $b_2,b_1,b_0$.

\begin{definition}
A function $\phi\in C_0^\infty$ is a normalized bump of order $m\in\N$ if $\supp(\phi)\subset B(0,1)$ and
\begin{align*}
\sup_{|\alpha|\leq m}||\partial^\alpha\phi||_{L^\infty}\leq1.
\end{align*}
Let $b_0,b_1,b_2\in L^\infty$ be para-accretive functions, and $T$ be an bilinear C-Z operator associated to $b_0,b_1,b_2$.  We say that $M_{b_0}T(M_{b_1}\,\cdot\,,M_{b_2}\,\cdot\,)$ satisfies the weak boundedness property (written $M_{b_0}T(M_{b_1}\,\cdot\,,M_{b_2}\,\cdot\,)\in WBP$) if there exists an $m\in\N$ such that for all normalized bumps $\phi_0,\phi_1,\phi_2\in C_0^\infty$ of order $m$, $x\in\R^n$, and $R>0$
\begin{align*}
\left|\<T(M_{b_1}\phi_1^{x,R},M_{b_2}\phi_2^{x,R}),M_{b_0}\phi_0^{x,R}\>\right|\less R^n
\end{align*}
where $\phi^{x,R}(u)=\phi(\frac{u-x}{R})$.
\end{definition}

It follows by the symmetry of this definition that $M_{b_0}T(M_{b_1}\,\cdot\,,M_{b_2}\,\cdot\,)\in WBP$ if and only if $M_{b_1}T^{1*}(M_{b_0}\,\cdot\,,M_{b_2}\,\cdot\,)\in WBP$ if and only if $M_{b_2}T^{2*}(M_{b_1}\,\cdot\,,M_{b_0}\,\cdot\,)\in WBP$.  Next we define $T$ on $(b_1C^\delta\cap L^\infty)\times(b_2C^\delta\cap L^\infty)$, so that we can make sense of the testing condition $M_{b_0}T(b_1,b_2)\in BMO$ as well as the transpose conditions.  The definition we give is essentially the same as the one given by Torres \cite{T} in the linear setting and by Grafakos-Torres \cite{GT1} in the multilinear setting.  Here we use the definition from \cite{GT1} with the necessary modifications for the accretive functions $b_0,b_1,b_2$.  A benefit of this definition versus the ones (see e.g. \cite{DJ} or \cite{DJS}) is that we define $T(b_1,b_2)$ paired with any element of $b_0C_0^\delta$, not just the ones with mean zero.  Although one must still take care to note that the definition of $T$ agrees with the given definition of $T$ when paired with elements of $b_0C_0^\delta$ with mean zero.  This is all made precise in the next definition and the remarks that follow it.

\begin{definition}\label{d:Tb}
Let $b_0,b_1,b_2$ be para-accretive function, $T$ be a bilinear singular integral operator associated to $b_0,b_1,b_2$, and $f_1,f_2\in C^\delta\cap L^\infty$.  Also fix functions $\eta_R^i\in C_0^\infty$ for $R>0$, $i=1,2$ such that $\eta_R^i\equiv1$ on $B(0,R)$ and $\supp(\eta_R^i)\subset B(0,2R)$.  Then we define 
\begin{align}
&\hspace{-.5cm}\<T(b_1f_1,b_2f_2),b_0f_0\>=\lim_{R\rightarrow\infty}\<T(\eta_R^1b_1f_1,\eta_R^2b_2f_2),b_0f_0\>\notag\\
&\hspace{1cm}-\int_{\R^3}K(0,y_1,y_2)b_0(x)f_0(x)\prod_{i=1}^2f_i(y_i)(\eta_R^i(y_i)-\eta_{R_0}^i(y_i))b_i(y_i)dx\,dy_1\,dy_2,\label{Tb}
\end{align}
where $f_0\in C_0^\delta$ and $R_0>0$ is minimal such that $\supp(f_0)\subset B(0,R_0/2)$.  When $R>2R_0$, we have
\begin{align*}
\<T(\eta_R^1b_1f_1,\eta_R^2b_2f_2),b_0f_0\>=\<T(\eta_{R_0}^1b_1f_1,\eta_{R_0}^2b_2f_2),b_0f_0\>\hspace{-1cm}&\\
&\hspace{-2cm}+\<T(\eta_{R_0}^1b_1f_1,(\eta_R^2-\eta_{R_0}^2)b_2f_2),b_0f_0\>\\
&\hspace{-2cm}+\<T((\eta_R^1-\eta_{R_0}^1)b_1f_1,\eta_{R_0}^2b_2f_2),b_0f_0\>\\
&\hspace{-2cm}+\<T((\eta_R^1-\eta_{R_0}^1)b_1f_1,(\eta_R^2-\eta_{R_0}^2)b_2f_2),b_0f_0\>\\
&\hspace{-6.93cm}=\<T(\eta_{R_0}^1b_1f_1,\eta_{R_0}^2b_2f_2),b_0f_0\>\\
&\hspace{-6.2cm}+\int_{\R^{3n}}K(y_0,y_1,y_2)\eta_{R_0}^1(y_1)(\eta_R^2(y_2)-\eta_{R_0}^2(y_2))\prod_{i=0}^2b_i(y_i)f_i(y_i)dy_0\,dy_1\,dy_2\\
&\hspace{-6.2cm}+\int_{\R^{3n}}K(y_0,y_1,y_2)(\eta_R^1(y_1)-\eta_{R_0}^1(y_1))\eta_{R_0}^2(y_2)\prod_{i=0}^2b_i(y_i)f_i(y_i)dy_0\,dy_1\,dy_2\\
&\hspace{-6.2cm}+\<T((\eta_R^1-\eta_{R_0}^1)b_1f_1,(\eta_R^2-\eta_{R_0}^2)b_2f_2),b_0f_0\>\\
&\hspace{-6.93cm}=I+II+III+IV.
\end{align*}
The first term $I$ is well defined since $\eta_{R_0}^ib_if_i\in b_iC_0^\delta$ for a fixed $R_0$ (depending on $f_0$).  We check that the first integral term $II$ is absolutely convergent:  The integrand of $II$ is bounded by $||b_0||_{L^\infty}\prod_{i=1}^2||b_i||_{L^\infty}||f_i||_{L^\infty}$ times
\begin{align*}
|K(y_0,y_1,y_2)\eta_{R_0}^1(y_1)(\eta_R^2(y_2)-\eta_{R_0}^2(y_2))f_0(y_0)|&\less\frac{|\eta_{R_0}^1(y_1)(\eta_R^2(y_2)-\eta_{R_0}^2(y_2))f_0(y_0)|}{(|y_0-y_1|+|y_0-y_2|)^{2n}}\\
&\hspace{-1cm}\leq\frac{|\eta_{R_0}^1(y_1)(\eta_R^2(y_2)-\eta_{R_0}^2(y_2))f_0(y_0)|}{(|y_0-y_1|+|y_0-y_2|/2+(R_0-R_0/2)/2)^{2n}}\\
&\hspace{-1cm}\less\frac{|\eta_{R_0}^1(y_1)f_0(y_0)|}{(R_0+|y_0-y_2|)^{2n}}.
\end{align*}
This is an $L^1(\R^{3n})$ function that is independent of $R$ (as long as $R>4R_0$),
\begin{align*}
\int_{\R^{3n}}\frac{|\eta_{R_0}^1(y_1)f_0(y_0)|}{(R_0+|y_0-y_2|)^{2n}}dy_0\,dy_1\,dy_2&\less\int_{\R^{2n}}\frac{|\eta_{R_0}^1(y_1)f_0(y_0)|}{R_0^n}dy_0\,dy_1\less ||f_0||_{L^\infty}R_0^n.
\end{align*}
Since $\eta_R\rightarrow1$ pointwise, by dominated convergence the following limit exists:
\begin{align*}
&\lim_{R\rightarrow\infty}\int_{\R^{3n}}K(y_0,y_1,y_2)\eta_{R_0}^1(y_1)(\eta_R^2(y_2)-\eta_{R_0}^2(y_2))\prod_{i=0}^2b_i(y_i)f_i(y_i)dy_0\,dy_1\,dy_2\\
&\hspace{1.5cm}=\int_{\R^{3n}}K(y_0,y_1,y_2)\eta_{R_0}^1(y_1)(1-\eta_{R_0}^2(y_2))\prod_{i=0}^2b_i(y_i)f_i(y_i)dy_0\,dy_1\,dy_2.
\end{align*}
So $\lim_{R\rightarrow\infty}II$ exists.  A symmetric argument holds for $\lim_{R\rightarrow\infty}III$.  Finally, we consider $IV$ minus the integral term from \eqref{Tb}
\begin{align*}
&IV-\int_{\R^3}K(0,y_1,y_2)b_0(y_0)f_0(y_0)\prod_{i=1}^2f_i(y_i)\eta_R^i(y_i)b_i(y_i)dy_0\,dy_1\,dy_2\\
&\hspace{4cm}=\int_{\R^{3n}}(K(y_0,y_1,y_2)-K(0,y_1,y_2))b_0(y_0)f_0(y_0)\\
&\hspace{5.5cm}\times\prod_{i=1}^2(\eta_R^i(y_i)-\eta_{R_0}^i(y_i))f_i(y_i)b_i(y_i)dy_0\,dy_1\,dy_2.
\end{align*}
Again we bound the integrand by $||b_0||_{L^\infty}\prod_{i=1}^2||b_i||_{L^\infty}||f_i||_{L^\infty}$ times
\begin{align*}
|K(y_0,y_1,y_2)-K(0,y_1,y_2)|\;|f_0(y_0)|(\eta_R^1(y_1)-\eta_{R_0}^1(y_1)&\less\frac{|y_0|^\gamma|\eta_R^1(y_1)-\eta_{R_0}^1(y_1)|}{(|y_0-y_1|+|y_0-y_2|)^{2n+\gamma}}|f_0(y_0)|\\
&\hspace{-2cm}\less\frac{|y_0|^\gamma|\eta_R^1(y_1)-\eta_{R_0}^1(y_1)|}{(|y_0-y_1|/2+R_0/4+|y_0-y_2|)^{2n+\gamma}}|f_0(y_0)|\\
&\hspace{-2cm}\less\frac{R_0^\gamma|f_0(y_0)|}{(R_0+|y_0-y_1|+|y_0-y_2|)^{2n+\gamma}},
\end{align*}
which is an $L^1(\R^{3n})$ function:
\begin{align*}
\int_{\R^{3n}}\frac{R_0^\gamma|f_0(y_0)|}{(R_0+|y_0-y_1|+|y_0-y_2|)^{2n+\gamma}}dy_0\,dy_1\,dy_2&\less\int_{\R^{2n}}\frac{R_0^\gamma|f_0(y_0)|}{(R_0+|y_0-y_1|)^{n+\gamma}}dy_0\,dy_1\\
&\less\int_{\R^n}|f_0(y_0)|dy_0\less||f_0||_{L^\infty}R_0^n.
\end{align*}
Then it follows again by dominated convergence that
\begin{align*}
&\lim_{R\rightarrow\infty}\<T((\eta_R^1-\eta_{R_0}^1)b_1f_1,(\eta_R^2-\eta_{R_0}^2)b_2f_2),b_0f_0\>\\
&\hspace{1.5cm}-\int_{\R^3}K(0,y_1,y_2)b_0(y_0)f_0(y_0)\prod_{i=1}^2f_i(y_i)(\eta_R^i(y_i)-\eta_{R_0}^i(y_i))b_i(y_i)dy_0\,dy_1\,dy_2\\\
&\hspace{.5cm}=\int_{\R^{3n}}(K(x,y_1,y_2)-K(0,y_1,y_2))b_0(x)f_0(x)\prod_{i=1}^2(1-\eta_{R_0}^i(y_i))f_i(y_i)b_i(y_i)dy_1\,dy_2\,dx,
\end{align*}
which is an absolutely convergent integral.  Therefore $T(b_1f_1,b_2f_2)$ is well defined as an element of $(b_0C_0^\delta)'$ for $f_1,f_2\in C^\delta\cap L^\infty$.  Furthermore if $f_0,f_1,f_2\in C_0^\delta$ and $b_0f_0$ has mean zero, then this definition of $T$ is consistent with the a priori definition of $T$ since
\begin{align*}
&\lim_{R\rightarrow\infty}\int_{\R^3}K(0,y_1,y_2)b_0(y_0)f_0(y_0)\prod_{i=1}^2f_i(y_i)(\eta_R^i(y_i)-\eta_{R_0}^i(y_i))b_i(y_i)dy_0\,dy_1\,dy_2\\\
&\hspace{.15cm}=\(\int_{\R^3}K(0,y_1,y_2)\prod_{i=1}^2b_i(y_i)f_i(y_i)(1-\eta_{R_0}^i(y_i))dy_1\,dy_2\)\(\int_{\R^n}b_0(y_0)f_0(y_0)dy_0\)=0,
\end{align*}
since both of these integrals are absolutely convergent.  Also, when $b_0f_0$ has mean zero in this way, the definition of $\<T(b_1,b_2),b_0f_0\>$ is independent of the choice of $\eta_R^1$ and $\eta_R^2$.  We will also use the notation $M_{b_0}T(b_1,b_2)\in BMO$ or $M_{b_0}T(b_1,b_2)=\beta$ for $\beta\in BMO$ to mean that for all $f_0\in C_0^\delta$ such that $b_0f_0$ has mean zero, the following holds:
\begin{align*}
\<T(b_1,b_2),b_0f_0\>=\<\beta,b_0f_0\>.
\end{align*}
Here the left hand side makes sense since $T(b_1,b_2)$ is defined in $(b_0C_0^\delta)'$.  The right hand side also makes sense since $b_0f_0\in H^1$ for $f_0\in C_0^\delta$ where $b_0f_0$ has mean zero.  The condition $M_{b_0}T(b_1,b_2)\in BMO$ defined here is weaker than (possibly equivalent to) $T(b_1,b_2)\in BMO$ when we can make sense of $T(b_1,b_2)$ as a locally integrable function.  This is because our definition of $M_{b_0}T(b_1,b_2)\in BMO$ only requires this equality to hold when paired with a subset of the predual space of $BMO$, namely we require this to hold for $\{b_0f:f\in C_0^\delta\text{ and }b_0f\text{ has mean zero}\}\subsetneq H^1$.  It is possible that this is equivalent through some sort of density argument, but that is not of consequence here.  So we do not pursue it any further, and use the definition of $M_{b_0}T(b_1,b_2)\in BMO$ that we have provided.  Furthermore, if $T$ is bounded from $L^{p_1}\times L^{p_2}$ into $L^p$ for some $1\leq p_1,p_2,p<\infty$, then $T$ can be defined on $L^\infty\times L^\infty$ and is bounded from $L^\infty\times L^\infty$ into $BMO$.  This is result is due to Peetre \cite{P1}, Spanne \cite{Sp}, and Stein \cite{S2} in the linear setting and Grafakos-Torres \cite{GT1} in the bilinear setting.  Hence, if $T$ is bounded, then $M_{b_0}T(b_1,b_2),M_{b_1}T^{*1}(b_0,b_2),M_{b_2}T^{*2}(b_1,b_0)\in BMO$.
\end{definition}

To end this section, we make a short remark about notation.
\begin{remark}\label{r:BMOnotation}
In \cite{DJS}, they require $Tb_1\in BMO$, by which they mean that there is a $\beta\in BMO$ so that  $\<Tb_1,f\>=\<\beta,f\>$ for all $f\in b_0C_0^\delta$ where $f$ has integral zero.  This is equivalent to saying $\<Tb_1,b_0 f\>=\<\beta,b_0 f\>$ for all $f\in C_0^\delta$ where $b_0 f$ has integral zero (note that the function f plays different ``roles'' in these two expressions).  There is a small abuse in notation of \cite{DJS}.  They say that $Tb_1\in BMO$, with no mention of the para-accretive function $b_0$, but their definition does depend on $b_0$.

In the notation used here, we say (by definition) that $M_{b_0}Tb_1\in BMO$ if there is a $\beta\in BMO$ so that $\<Tb_1,b_0 f\>=\<\beta,b_0 f\>$ for all $f\in C_0^\delta$ where $b_0 f$ has integral zero.  This is equivalent to the notion of $Tb_1\in BMO$ as defined in \cite{DJS}.  We also abuse notation here in the sense that if $M_{b_0}Tb_1\in BMO$ as we defined it, then the appropriate identification of an element in $BMO$ would be $Tb_1=\beta$, not $M_{b_0}Tb_1=\beta$ as the notation suggests.  We have two reasons for using this notation.

The first is that we felt it necessary to mention the function $b_0$ in the requirement $Tb_1\in BMO$ since, as a matter of definition, it does depend on $b_0$.

The second reason is a bit more involved.  Note that we do not assert the following:  If $M_{b_0}Tb_1\in BMO$, then $Tb_1\in BMO$.  We don't make this conclusion because 1) we only deal with pairings of the form $\<Tb_1,b_0 f\>$ for $f\in C_0^\delta$ where $b_0 f$ has mean zero, and 2) we have not shown that the collection $\{b_0 f:f\in C_0^\delta\text{ and }b_0f\text{ has mean zero}\}$ is dense in $H^1$.  If this collection is in fact dense in $H^1$, then we conclude that $M_{b_0}Tb_1\in BMO$ (as we've defined it) implies $Tb_1\in BMO$.  It may be the case that this this collection is dense in $H^1$, but it is not of consequence to us in this work.  This discussion applies to the bilinear conditions $M_{b_0}T(b_1,b_2)\in BMO$ as well with the appropriate modifications.

\end{remark}

\subsection{Function, Operator, and General Notations}

Define for $N>0$, $k\in\Z$, and $x\in\R^n$
\begin{align*}
\Phi_k^N(x)=\frac{2^{kn}}{(1+2^k|x|)^N}.
\end{align*}
For $f:\R^n\rightarrow\C$, we use the notation $f_k(x)=2^{kn}f(2^kx)$.  We will say indices $0<p,p_1,p_2<\infty$ satisfy a H\"older relationship if
\begin{align}
\frac{1}{p}=\frac{1}{p_1}+\frac{1}{p_2}.\label{Holder}
\end{align}

\begin{definition}
Let $\theta_k$ be a function from $\R^{2n}$ into $\C$ for each $k\in\Z$.  We call $\{\theta_k\}_{k\in\Z}$ a collection of Littlewood-Paley square function kernels of type $LPK(A,N,\gamma)$ for $A>0$, $N>n$, and $0<\gamma\leq1$ if for all $x,y,y'\in\R^n$ and $k\in\Z$
\begin{align}
&|\theta_k(x,y)|\leq A\Phi_k^{N+\gamma}(x-y)\label{LPker1}\\
&|\theta_k(x,y)-\theta_k(x,y')|\leq A(2^k|y-y'|)^\gamma\(\Phi_k^{N+\gamma}(x-y)+\Phi_k^{N+\gamma}(x-y')\).\label{LPker2}
\end{align}
We say that $\{\theta_k\}_{k\in\Z}$ is a collection of smooth Littlewood-Paley square function kernels of type $SLPK(A,N,\gamma)$ for $A>0$, $N>n$, and $0<\gamma\leq1$ if it satisfies \eqref{LPker1}, \eqref{LPker2}, and for all $x,x',y\in\R^n$ and $k\in\Z$
\begin{align}
&|\theta_k(x,y)-\theta_k(x',y)|\leq A(2^k|x-x'|)^\gamma\prod_{i=1}^2\(\Phi_k^{N+\gamma}(x'-y)-\Phi_k^{N+\gamma}(x'-y)\).\label{LPker3}
\end{align}
If $\{\theta_k\}$ is a collection of Littlewood-Paley square function kernels of type $LPK(A,N,\gamma)$ (respectively $SLPK(A,N,\gamma)$) for some $A>0$, $N>n$, and $0<\gamma\leq1$, then write $\{\theta_k\}\in LPK$ (respectively $\{\theta_k\}\in SLPK$).  We also define for $k\in\Z$, $x\in\R^n$, and $f\in L^1+L^\infty$
\begin{align*}
\Theta_kf(x)=\int_{\R^n}\theta_k(x,y)f(y)dy.
\end{align*}
\end{definition}

\begin{definition}
Let $\theta_k$ be a functions from $\R^{3n}$ into $\C$ for each $k\in\Z$.  We call $\{\theta_k\}_{k\in\Z}$ a collection of bilinear Littlewood-Paley square function kernels of type $BLPK(A,N,\gamma)$ for $A>0$, $N>n$, and $0<\gamma\leq1$ if for all $x,y_1,y_2,y_1',y_2'\in\R^n$ and $k\in\Z$
\begin{align}
&|\theta_k(x,y_1,y_m)|\leq A\Phi_k^{N+\gamma}(x-y_1)\Phi_k^{N+\gamma}(x-y_2)\label{BLPker1}\\
&|\theta_k(x,y_1,y_2)-\theta_k(x,y_1',y_2)|\leq A(2^k|y_1-y_1'|)^\gamma\Phi_k^{N+\gamma}(x-y_2)\notag\\
&\hspace{5.5cm}\times\(\Phi_k^{N+\gamma}(x-y_1)+\Phi_k^{N+\gamma}(x-y_1')\)\label{BLPker2}\\
&|\theta_k(x,y_1,y_2)-\theta_k(x,y_1,y_2')|\leq A(2^k|y_2-y_2'|)^\gamma\Phi_k^{N+\gamma}(x-y_1)\notag\\
&\hspace{5.5cm}\times\(\Phi_k^{N+\gamma}(x-y_2)+\Phi_k^{N+\gamma}(x-y_2')\).\label{BLPker3}
\end{align}
We say that $\{\theta_k\}_{k\in\Z}$ is a collection of smooth Littlewood-Paley square function kernels of type $SBLPK(A,N,\gamma)$ for $A>0$, $N>n$, and $0<\gamma\leq1$ if it satisfies \eqref{LPker1}-\eqref{LPker3} and for all $x,x',y_1,y_2\in\R^n$ and $k\in\Z$
\begin{align}
&|\theta_k(x,y_1,y_2)-\theta_k(x',y_1,y_2)|\leq A(2^k|x-x'|)^\gamma\prod_{i=1}^2\(\Phi_k^{N+\gamma}(x-y_i)-\Phi_k^{N+\gamma}(x'-y_i)\).\label{BLPker4}
\end{align}
If $\{\theta_k\}$ is a collection of bilinear Littlewood-Paley square function kernels of type \\$BLPK(A,N,\gamma)$ (respectively of type $SBLPK(A,N,\gamma)$) for some $A>0$, $N>n$, and $0<\gamma\leq1$, then we write $\{\theta_k\}\in BLPK$ (respectively $\{\theta_k\}\in SBLPK$).  We also define for $k\in\Z$, $x\in\R^n$, and $f_1,f_2\in L^1+L^\infty$
\begin{align*}
\Theta_k(f_1,f_2)(x)=\int_{\R^{2n}}\theta_k(x,y_1,y_2)f_1(y_1)f_2(y_2)dy_1\,dy_2.
\end{align*}
\end{definition}

\begin{remark}\label{r:equivkernelcond}
Let $\theta_k$ be a function from $\R^{3n}$ to $\C$ for each $k\in\Z$.  There exists $A_1>0$, $N_1>n$, and $0<\gamma\leq1$ such that $\{\theta_k\}$ is a collection of Littlewood-Paley square function kernels of type $SBLPK(A_1,N_1,\gamma)$ if and only if there exist $A_2>0$, $N_2>n$, and $0<\gamma\leq1$ such that for all $x,y_1,y_2,y_1',y_2\in\R^n$ and $k\in\Z$
\begin{align}
&|\theta_k(x,y_1,y_2)|\leq A_2\Phi_k^{N_2}(x-y_1)\Phi_k^{N_2}(x-y_2)\label{altkerii1}\\
&|\theta_k(x,y_1,y_2)-\theta_k(x,y_1',y_2)|\leq A_22^{2nk}(2^k|y_1-y_1'|)^{\gamma}\label{altkerii2}\\
&|\theta_k(x,y_1,y_2)-\theta_k(x,y_1,y_2')|\leq A_22^{2nk}(2^k|y_2-y_2'|)^{\gamma}\label{altkerii3}\\
&|\theta_k(x,y_1,y_2)-\theta_k(x',y_1,y_2)|\leq A_22^{2nk}(2^k|x-x'|)^{\gamma}\label{altkerii4}.
\end{align}
A similar equivalence holds for smooth square function kernels of type $BLPK(A,N,\gamma)$, $LPK(A,N,\gamma)$, and $SLPK(A,N,\gamma)$ with the appropriate modifications.
\end{remark}

\begin{proof}
Assume that $\{\theta_k\}\in SBLPK(A_1,N_1,\gamma)$, and define $A_2=2A_1$, $N_2=N_1+\gamma$, and $\gamma=\gamma$.  It follows easily that \eqref{altkerii1} holds.  Also
\begin{align*}
|\theta_k(x,y_1,y_2)-\theta_k(x,y_1',y_2)|&\leq A_1(2^k|y_1-y_1'|)^{\gamma}\Phi_k^{N_1+\gamma}(x-y_2)\\
&\hspace{2cm}\times\(\Phi_k^{N_1+\gamma}(x-y_1)+\Phi_k^{N_1+\gamma}(x-y_1')\)\\
&\leq 2A_12^{2nk}(2^k|y_1-y_1'|)^{\gamma}.
\end{align*}
A similar argument holds for regularity in the $y_2$ and $x$ spots.  Then $\theta_k$ satisfies \eqref{altkerii1}-\eqref{altkerii4}.

Conversely we assume that \eqref{altkerii1}-\eqref{altkerii4} hold.  Define $\eta=\frac{N_2-n}{2(N_2+\gamma)}$, $A_1=A_2$, $N_1=N_2(1-\eta)-\eta\gamma$, and $\gamma=\eta\gamma$.  Estimate \eqref{BLPker1} easily follows since $N_1+\gamma<N_2$.  Estimate \eqref{BLPker2} also follows since
\begin{align*}
|\theta_k(x,y_1,y_2)-\theta_k(x,y_1',y_2)|&\leq A_2(2^k|y_1-y_1'|)^{\eta\gamma}\Phi_k^{N_2(1-\eta)}(x-y_2)\\
&\hspace{1.5cm}\times\(\Phi_k^{N_2(1-\eta)}(x-y_1)+\Phi_k^{N_2(1-\eta)}(x-y_1')\)\\
&\leq A_1(2^k|y_1-y_1'|)^{\gamma}\Phi_k^{N_1+\gamma}(x-y_2)\\
&\hspace{1.5cm}\times\(\Phi_k^{N_1+\gamma}(x-y_1)+\Phi_k^{N_1+\gamma}(x-y_1')\).
\end{align*}
Note that this selection satisfies
\begin{align*}
N_1=N_2-\eta(N_2+\gamma)=\frac{N_2+n}{2}>n.
\end{align*}
Then \eqref{BLPker2} holds for this choice of $A_1$, $N_1$, and $\gamma$ as well.  Estimates \eqref{BLPker3} and \eqref{BLPker4} follow with a similar argument, and hence $\{\theta_k\}$ is a collection of Littlewood-Paley square function kernel of type $BLPK(A_1,N_1,\gamma)$.  The proofs of the other equivalences are contained in the proof of this one.
\end{proof}

\begin{remark}\label{r:prodBLPK}
If $\{\lambda_k^i\}\in LPK$ (respectively $\{\lambda_k^i\}\in SLPK$)for $i=1,2$, then $\{\theta_k\}\in BLPK$ (respectively $\{\theta_k\}\in SBLPK$) where $\theta_k$ is defined, $\theta_k(x,y_1,y_2)=\lambda_k^1(x,y_1)\lambda_k^2(x,y_2)$.
\end{remark}

\section{Almost Orthogonality Estimates}

In this section, we prove some almost orthogonality estimates for kernel functions and for operators.  These type of estimates have been well-developed over the years.  The linear version of these results were implicit in many classical works by Besov \cite{B1,B2}, Taibleson \cite{T1,T2,T3}, Peetre \cite{P1,P2,P3}, Triebel \cite{Tr1,Tr2}, and Lizorkin \cite{Liz}, and they appear explicitly in the work of Frazier-Jawerth \cite{FJ}.  The bilinear versions appear in the work of Grafakos-Torres \cite{GT2}.

\subsection{Kernel Almost Orthogonality}

We first mention a well known almost orthogonality estimate for non-negative functions:  If $M,N>n$, then for all $j,k\in\Z$
\begin{align*}
\int_{\R^n}\Phi_j^M(x-u)\Phi_k^N(u-y)du\less\Phi_j^M(x-y)+\Phi_k^N(x-y).
\end{align*}
Then next result is also a result for integrals with non-negative integrands, but this one involves regularity estimates on the functions.

\begin{proposition}\label{p:smoothAO}
If $\{\theta_k\}_{k\in\Z}\in BLPK$, then for all $j,k\in\Z$, $x,y_1,y_2\in\R^n$
\begin{align*}
\int_{\R^n}&|\theta_j(x,y_1,y_2)-\theta_j(x,u,y_2)|\Phi_k^{N+\gamma}(u-y_1)du\\
&\hspace{4.5cm}\less 2^{\gamma(j-k)}\(\Phi_j^N(x-y_1)+\Phi_k^N(x-y_1)\)\Phi_j^N(x-y_2),\\
\int_{\R^n}&|\theta_j(x,y_1,y_2)-\theta_j(x,y_1,u)|\Phi_k^{N+\gamma}(u-y_2)du\\
&\hspace{4.5cm}\less 2^{\gamma(j-k)}\Phi_j^N(x-y_1)\(\Phi_j^N(x-y_2)+\Phi_k^N(x-y_2)\),
\end{align*}
and
\begin{align*}
\int_{\R^{2n}}&|\theta_j(x,y_1,y_2)-\theta_j(x,u_1,u_2)|\Phi_k^{N+\gamma}(u_1-y_1)\Phi_k^{N+\gamma}(u_2-y_2)du_1\,du_2\\
&\hspace{4.5cm}\less 2^{\gamma(j-k)}\prod_{i=1}^2\(\Phi_j^N(x-y_i)+\Phi_k^N(x-y_i)\).
\end{align*}
\end{proposition}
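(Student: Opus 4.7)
The plan is to prove all three inequalities by the same two-regime case split on $j$ versus $k$. When $j\leq k$ the exponent $2^{\gamma(j-k)}\leq 1$, so I will extract the gain by exploiting the H\"older regularity that $\theta_j$ inherits from the $BLPK$ conditions; when $j>k$ we have $2^{\gamma(j-k)}\geq 1$, so the triangle inequality combined with the size bound \eqref{BLPker1} already beats the target. The workhorse tool for the first regime is the pointwise identity
\begin{align*}
(2^k|u-y|)^\gamma\,\Phi_k^{N+\gamma}(u-y)\leq \Phi_k^N(u-y),
\end{align*}
which trades a H\"older factor for one unit of decay in the exponent. I will apply this repeatedly together with the basic non-negative convolution bound $\int\Phi_j^M(x-u)\Phi_k^N(u-y)\,du\less \Phi_j^M(x-y)+\Phi_k^N(x-y)$ recalled at the start of the section.

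I would first treat the single-variable estimate in $y_1$. In the regime $j\leq k$ I apply \eqref{BLPker2} to the integrand, split $(2^j|y_1-u|)^\gamma=2^{\gamma(j-k)}(2^k|y_1-u|)^\gamma$, absorb the second factor into $\Phi_k^{N+\gamma}(u-y_1)$ via the identity above, then reduce the remaining integral to the basic convolution bound (using $\Phi_j^{N+\gamma}\leq \Phi_j^N$ in the piece that requires no convolution). In the regime $j>k$ I simply bound the difference by the sum of the two size estimates from \eqref{BLPker1} and integrate directly; the resulting $O(1)$ prefactor is absorbed into $2^{\gamma(j-k)}\geq 1$. The $y_2$-estimate is entirely symmetric, with \eqref{BLPker3} playing the role of \eqref{BLPker2}.

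For the double-variable estimate I would telescope
\begin{align*}
\theta_j(x,y_1,y_2)-\theta_j(x,u_1,u_2)=\bigl(\theta_j(x,y_1,y_2)-\theta_j(x,u_1,y_2)\bigr)+\bigl(\theta_j(x,u_1,y_2)-\theta_j(x,u_1,u_2)\bigr)
\end{align*}
and use Fubini on each piece. For the first piece, integrating in $u_1$ first reduces the inner integral to the $y_1$-estimate already proved, and the trivial outer integral $\int\Phi_k^{N+\gamma}(u_2-y_2)\,du_2\less 1$ contributes no additional factor (the needed decay in $x-y_2$ is already supplied by the $\Phi_j^N(x-y_2)$ output of the first estimate). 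For the second piece, integrating in $u_2$ first with the slot $u_1$ held fixed is exactly the setup of the $y_2$-estimate, yielding $2^{\gamma(j-k)}\,\Phi_j^N(x-u_1)\bigl(\Phi_j^N(x-y_2)+\Phi_k^N(x-y_2)\bigr)$; a final convolution of $\Phi_j^N(x-u_1)$ against $\Phi_k^{N+\gamma}(u_1-y_1)$ in $u_1$ then produces the required factor $\Phi_j^N(x-y_1)+\Phi_k^N(x-y_1)$.

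The only genuine obstacle is bookkeeping: tracking which exponent $N$ versus $N+\gamma$ appears on each $\Phi$ and verifying that the unit of decay traded away when absorbing the H\"older factor still leaves enough for the non-negative convolution bound to close (since $N>n$, this is automatic but easy to misstate the first time). No deeper difficulty arises; the architecture is the standard one-sided almost orthogonality scheme, adapted to the bilinear kernel setting.
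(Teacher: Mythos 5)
Your proof is correct and follows essentially the same scheme as the paper's: apply the $BLPK$ regularity condition to the integrand, pull out $2^{\gamma(j-k)}$ via $(2^j|\cdot|)^\gamma=2^{\gamma(j-k)}(2^k|\cdot|)^\gamma$, absorb the remaining H\"older factor into $\Phi_k^{N+\gamma}$ to drop one unit of decay, and close with the non-negative convolution bound; the double-variable estimate is obtained by telescoping and reducing to the single-variable cases. One small remark: the case split on $j\leq k$ versus $j>k$ is unnecessary --- the argument you run in the regime $j\leq k$ works verbatim for all $j,k$, since $2^{\gamma(j-k)}\geq 1$ when $j>k$ only weakens the target bound, and the identity $(2^k|u-y|)^\gamma\Phi_k^{N+\gamma}(u-y)\leq\Phi_k^N(u-y)$ never required a sign condition on $j-k$.
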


\begin{proof}
Since $\{\theta_k\}_{k\in\Z}$ is of type $BLPK(A,N,\gamma)$, it follows that 
\begin{align*}
\int_{\R^n}&|\theta_j(x,y_1,y_2)-\theta_j(x,u,y_2)|\Phi_k^{N+\gamma}(u-y_1)du\\
&\less\Phi_j^N(x-y_2)\int_{\R^n}(2^j|u-y_1|)^\gamma\(\Phi_j^{N+\gamma}(x-y_1)+\Phi_j^{N+\gamma}(x-u)\)\Phi_k^{N+\gamma}(u-y_1)du\\
&\leq 2^{\gamma(j-k)}\Phi_j^N(x-y_2)\int_{\R^n}\(\Phi_j^{N+\gamma}(x-y_1)+\Phi_j^{N+\gamma}(x-u)\)\Phi_k^N(u-y_1)du\\
&\less 2^{\gamma(j-k)}\(\Phi_j^N(x-y_1)+\Phi_k^N(x-y_1)\)\Phi_j^N(x-y_2).
\end{align*}
By symmetry the second estimate holds as well.  For the third estimate, we make a similar argument,
\begin{align*}
\int_{\R^{2n}}&|\theta_j(x,y_1,y_2)-\theta_j(x,u_1,u_2)|\Phi_k^{N+\gamma}(u_1-y_1)\Phi_k^{N+\gamma}(u_2-y_2)du_1\,du_2\\
&\leq\int_{\R^{2n}}|\theta_j(x,y_1,y_2)-\theta_j(x,y_1,u_2)|\Phi_k^{N+\gamma}(u_1-y_1)\Phi_k^{N+\gamma}(u_2-y_2)du_1\,du_2\\
&\hspace{.5cm}+\int_{\R^{2n}}|\theta_j(x,y_1,u_2)-\theta_j(x,u_1,u_2)|\Phi_k^{N+\gamma}(u_1-y_1)\Phi_k^{N+\gamma}(u_2-y_2)du_1\,du_2\\
&\less2^{\gamma(j-k)}\int_{\R^{2n}}\Phi_j^N(x-y_1)\(\Phi_j^N(x-y_2)+\Phi_j^N(x-u_2)\)\Phi_k^{N+\gamma}(u_1-y_1)\Phi_k^N(u_2-y_2)du_1\,du_2\\
&\hspace{.5cm}+2^{\gamma(j-k)}\int_{\R^{2n}}\(\Phi_j^N(x-y_1)+\Phi_j^N(x-u_1)\)\Phi_j^N(x-u_2)\Phi_k^N(u_1-y_1)\Phi_k^{N+\gamma}(u_2-y_2)du_1\,du_2\\
&\less2^{\gamma(j-k)}\(\Phi_j^N(x-y_1)+\Phi_k^N(x-y_1)\)\(\Phi_j^N(x-y_2)+\Phi_k^N(x-y_2)\).
\end{align*}
This completes the proof of the proposition.
\end{proof}

\subsection{Operator Almost Orthogonality Estimates}

It is well-known that if $N>n$ and $f\in L^1+L^\infty$, then $\Phi_k*|f|(x)\less\mathcal Mf(x)$ for all $k\in\Z$, where $\mathcal M$ is the Hardy-Littlewood maximal function
\begin{align*}
\mathcal Mf(x)=\sup_{x\ni B}\frac{1}{|B|}\int_B|f(y)|dy,
\end{align*}
and here the supremum is taken over all balls $B$ containing $x$.  Next we use the kernel function almost orthogonality estimates to prove pointwise estimates for some operators.

\begin{proposition}\label{p:operatorAO}
If $\{\lambda_k\},\{\theta_k\}\in LPK$ and there exists a para-accretive function $b$ such that $\Lambda_k(b)=\Theta_k(b)=0$ for all $k\in\Z$, then for all $f\in L^1+L^\infty$ and $j,k\in\Z$
\begin{align}
|\Theta_jM_b\Lambda_k^*f(x)|\less2^{-\gamma|j-k|}\mathcal Mf(x).\label{linearoperatorAO}
\end{align}
If $\{\lambda_k\}\in LPK$, $\{\theta_k\}\in SBLPK$ and there exists a para-accretive functions $b$ such that $\Lambda_k(b)=0$ and
\begin{align*}
\int_{\R^n}\theta_k(x,y_1,y_2)b(x)dx=0
\end{align*}
for all $k\in\Z$ and $y_1,y_2\in\R^n$, then for all $f_1,f_2\in L^1+L^\infty$ and $j,k\in\Z$
\begin{align}
|\Lambda_kM_b\Theta_j(f_1,f_2)(x)|&\less2^{-\gamma|j-k|}\mathcal M(\mathcal Mf_1\cdot \mathcal Mf_2)(x)\label{adjointoperatorAO}
\end{align}
Finally, if $\{\lambda_k^1\},\{\lambda_k^2\}\in LPK$, $\{\theta_k\}\in BLPK$ and there exist para-accretive functions $b_1,b_2$ and $i\in\{1,2\}$ such that $\Lambda_k^1(b_1)\cdot\Lambda_k^2(b_2)=\Theta_k(b_1,b_2)=0$ for all $k\in\Z$, then for all $f_1,f_2\in L^1+L^\infty$ and $j,k\in\Z$
\begin{align}
|\Theta_j(M_{b_1}\Lambda_k^{1\,*}f_1,M_{b_2}\Lambda_k^{2\,*}f_2)(x)|\less2^{-\gamma|j-k|}\mathcal Mf_1(x)\mathcal Mf_2(x).\label{operatorAO}
\end{align}
\end{proposition}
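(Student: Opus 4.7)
All three estimates follow the same three-step template: (i) write the composition as an integral operator with an explicit kernel, (ii) split into the cases $j\ge k$ and $j\le k$ and in each case use one of the given cancellation conditions to insert a difference in the appropriate variable of whichever factor is ``coarser'', (iii) apply the regularity from $LPK$/$SBLPK$ together with the elementary convolution bound $\int \Phi_j^M(x-u)\Phi_k^N(u-y)\,du\less\Phi_j^M(x-y)+\Phi_k^N(x-y)$ and $\Phi_k^N\ast|f|(x)\less \mathcal{M}f(x)$ to obtain a pointwise kernel estimate with a gain of $2^{-\gamma|j-k|}$, from which the claimed maximal-function bound follows.

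For \eqref{linearoperatorAO} the kernel of $\Theta_jM_b\Lambda_k^*$ is $K(x,z)=\int\theta_j(x,u)b(u)\lambda_k(z,u)\,du$. If $j\ge k$, use $\Theta_jb=0$ to replace $\lambda_k(z,u)$ by $\lambda_k(z,u)-\lambda_k(z,x)$ and apply \eqref{LPker2} for $\lambda_k$; the factor $(2^k|u-x|)^\gamma\leq 2^{-(j-k)\gamma}(2^j|u-x|)^\gamma$ absorbs a power of $(1+2^j|x-u|)^\gamma$ from the $\Phi_j^{N+\gamma}$ bound on $|\theta_j|$, after which the convolution estimate concludes. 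If $j\le k$, use $\Lambda_kb=0$ to insert a difference $\theta_j(x,u)-\theta_j(x,z)$ and appeal instead to the second-variable regularity of $\theta_j$.

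For \eqref{adjointoperatorAO} the kernel of $\Lambda_kM_b\Theta_j$ is $K(x,y_1,y_2)=\int\lambda_k(x,u)b(u)\theta_j(u,y_1,y_2)\,du$. When $j\le k$, use $\Lambda_kb=0$ and subtract $\theta_j(x,y_1,y_2)$, appealing to the first-variable regularity \eqref{BLPker4} of $\theta_j$ supplied by the $SBLPK$ hypothesis: the factor $(2^j|u-x|)^\gamma\leq 2^{-(k-j)\gamma}$ on the effective support of $\lambda_k(x,\cdot)$ gives the gain. When $j\ge k$, the hypothesis $\int\theta_j(u,y_1,y_2)b(u)\,du=0$ allows subtracting $\lambda_k(x,y_1)$ and using the second-variable regularity of $\lambda_k$ together with $(2^k|u-y_1|)^\gamma\leq 2^{-(j-k)\gamma}$ on the effective support of $\theta_j(\cdot,y_1,y_2)$. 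Proposition~\ref{p:smoothAO} then handles the remaining nonnegative integrals to give a pointwise bound of the form $2^{-\gamma|j-k|}\prod_{i=1}^2(\Phi_j^N(x-y_i)+\Phi_k^N(x-y_i))$, and pairing with $f_1,f_2$ and iterating the maximal inequality produces $\mathcal{M}(\mathcal{M}f_1\cdot\mathcal{M}f_2)(x)$.

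For \eqref{operatorAO} the kernel is
\[
K(x,z_1,z_2)=\int\theta_j(x,y_1,y_2)b_1(y_1)b_2(y_2)\lambda_k^1(z_1,y_1)\lambda_k^2(z_2,y_2)\,dy_1\,dy_2.
\]
When $j\le k$, using (WLOG) $\Lambda_k^1b_1=0$ one subtracts $\theta_j(x,z_1,y_2)$ from $\theta_j(x,y_1,y_2)$ and applies \eqref{BLPker2} in the $y_1$-slot to gain $(2^j|y_1-z_1|)^\gamma\leq 2^{-(k-j)\gamma}$ on the effective support of $\lambda_k^1(z_1,\cdot)$. When $j\ge k$ we use the product decomposition
\begin{align*}
\lambda_k^1(z_1,y_1)\lambda_k^2(z_2,y_2)&=\lambda_k^1(z_1,x)\lambda_k^2(z_2,x)+[\lambda_k^1(z_1,y_1)-\lambda_k^1(z_1,x)]\lambda_k^2(z_2,y_2)\\
&\quad+\lambda_k^1(z_1,x)[\lambda_k^2(z_2,y_2)-\lambda_k^2(z_2,x)];
\end{align*}
the first summand is annihilated by $\Theta_j(b_1,b_2)=0$, while each of the two difference terms yields $(2^k|y_i-x|)^\gamma\leq 2^{-(j-k)\gamma}$ on the effective support of $\theta_j(x,\cdot,\cdot)$. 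I expect this last case to be the main obstacle: only the single scalar cancellation $\Theta_j(b_1,b_2)=0$ is available, yet the object to be cancelled is a product of two separate factors $\lambda_k^1\lambda_k^2$, and the product-identity splitting above is exactly what reduces this to two single-cancellation estimates. The remaining bookkeeping---absorbing the regularity-gap powers into $\Phi_k^N$ factors and applying Proposition~\ref{p:smoothAO} to the resulting nonnegative integrals---is routine and produces the bound by $2^{-\gamma|j-k|}\mathcal{M}f_1(x)\mathcal{M}f_2(x)$.
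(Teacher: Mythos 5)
Your treatment of \eqref{linearoperatorAO} matches the paper's (the paper writes one of the two cases and appeals to symmetry), and your argument for \eqref{operatorAO} is correct and, if anything, a bit cleaner than the paper's: the explicit product-rule telescoping of $\lambda_k^1(z_1,\cdot)\lambda_k^2(z_2,\cdot)$ into two single-difference pieces is exactly what makes the scalar cancellation $\Theta_j(b_1,b_2)=0$ usable against a product of two factors, whereas the paper reaches the same estimates somewhat tersely by invoking Remark~\ref{r:prodBLPK} together with Proposition~\ref{p:smoothAO} (whose third estimate is itself proved by precisely that telescope). One small remark: the displayed hypothesis $\Lambda_k^1(b_1)\cdot\Lambda_k^2(b_2)=0$ is usable in the way you and the paper use it only if it is read as $\Lambda_k^1(b_1)(y_1)\,\Lambda_k^2(b_2)(y_2)=0$ for \emph{all} $y_1,y_2$, i.e.\ at least one factor vanishes identically; that is what your ``WLOG $\Lambda_k^1b_1=0$'' step silently requires, and it does hold.

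The gap is in \eqref{adjointoperatorAO}. You assert a pointwise bound on the kernel of $\Lambda_kM_b\Theta_j$,
\begin{align*}
\left|\int_{\R^n}\lambda_k(x,u)b(u)\theta_j(u,y_1,y_2)\,du\right|\less2^{-\gamma|j-k|}\prod_{i=1}^2\(\Phi_j^N(x-y_i)+\Phi_k^N(x-y_i)\),
\end{align*}
and this bound is false; neither Proposition~\ref{p:smoothAO} nor the argument you sketch yields it. The structural obstruction, which is exactly what distinguishes \eqref{adjointoperatorAO} from \eqref{operatorAO}, is that here the single operator $\Lambda_k$ sits on the \emph{outside}, so the coupling variable $u$ appears simultaneously in $\Phi_j(u-y_1)$ and $\Phi_j(u-y_2)$ and the $u$-integral does not separate into a $y_1$-factor times a $y_2$-factor. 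Concretely, in the case $j\geq k$, the $\lambda_k$-regularity term produces a contribution controlled by
\begin{align*}
2^{\gamma(k-j)}\,\Phi_k^{N+\gamma}(x-y_1)\int_{\R^n}\Phi_j^{N}(u-y_1)\Phi_j^{N+\gamma}(u-y_2)\,du\less 2^{\gamma(k-j)}\,\Phi_k^{N+\gamma}(x-y_1)\,\Phi_j^N(y_1-y_2),
\end{align*}
which measures $y_1-y_2$, not $x-y_2$. Taking $|x-y_1|\sim2^{-k}$ and $|y_1-y_2|\sim2^{-j}$ with $j>k$, this is of size $\sim 2^{kn}2^{jn}$ while your claimed product bound is $\sim 2^{2kn}$; the ratio $2^{(j-k)n}$ is unbounded. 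Note also that a true product kernel bound would give the \emph{stronger} conclusion $\mathcal Mf_1(x)\mathcal Mf_2(x)$ with no iteration of the maximal operator, which is not what the proposition claims, so your closing phrase ``pairing with $f_1,f_2$ and iterating the maximal inequality'' is in tension with the estimate it is supposedly derived from.

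The repair is to pair with $f_1,f_2$ \emph{before} eliminating $u$. For the $\Phi_k(x-u)$ piece this gives $\int\Phi_k(x-u)\prod_i\Phi_j*|f_i|(u)\,du\less\mathcal M(\mathcal Mf_1\cdot\mathcal Mf_2)(x)$ immediately. For the $\Phi_k(x-y_1)$ piece above, one can either integrate out $u$ and then $y_2$ to get $\int\Phi_k(x-y_1)|f_1(y_1)|\,\Phi_j*|f_2|(y_1)\,dy_1\less\mathcal M(|f_1|\,\mathcal Mf_2)(x)\leq\mathcal M(\mathcal Mf_1\cdot\mathcal Mf_2)(x)$, or follow the paper's $I,II,III$ split on whether $|x-y_1|\geq|x-u|/2$ or not. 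Either way, this is the nontrivial half of the $j\geq k$ case; it is not routine bookkeeping, and your proposal as written does not close without it.
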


Here we use capital $\Lambda_k$ to be the operator defined by integration against the kernel lower case $\lambda_k$, just like $\Theta_k$ and $\theta_k$.

\begin{proof}
We first prove \eqref{linearoperatorAO}.   Using that $\Lambda_k^*(b)=0$ and Proposition \ref{p:smoothAO}
\begin{align*}
|\Theta_jM_b\Lambda_k^*f(x)|&\leq\int_{\R^n}\left|\int_{\R^n}(\theta_j(x,u)-\theta_j(x,y))b(u)\lambda_k(y,u)du\right||f(y)|dy\\
&\less\int_{\R^{2n}}|\theta_j(x,u)-\theta_j(x,y)|\Phi_k^{N+\gamma}(y-u)|f(y)|du\,dy\\
&\less2^{\gamma(j-k)}\(\Phi_j^N*|f|(x)+\Phi_k^N*|f|(x)\)\\
&\less2^{\gamma(j-k)}\mathcal Mf(x).
\end{align*}
With a symmetric argument, the same estimate holds replacing $2^{\gamma(j-k)}$ with $2^{\gamma(k-j)}$.  Therefore \eqref{linearoperatorAO} holds.  Now we prove \eqref{adjointoperatorAO}.  We first use that $\Lambda_k(b)=0$ to estimate
\begin{align*}
&|\Lambda_kM_b\Theta_j(f_1,f_2)(x)|\\
&\hspace{0cm}\leq\int_{\R^{2n}}\left|\int_{\R^n}\lambda_k(x,u)b(u)(\theta_j(u,y_1,y_2)-\theta_j(x,y_1,y_2))du\right||f_1(y_1)f_2(y_2)|dy_1\,dy_2\\
&\hspace{0cm}\less\int_{\R^{3n}}\Phi_k^{N+\gamma}(x-u)(2^j|x-u|)^\gamma(\Phi_j^{N+\gamma}(u-y_1)\Phi_j^{N+\gamma}(u-y_2)+\Phi_j^{N+\gamma}(x-y_1)\Phi_j^{N+\gamma}(x-y_2))\\
&\hspace{8cm}\times|f_1(y_1)f_2(y_2)|du\,dy_1\,dy_2\\
&\hspace{0cm}\less2^{\gamma(j-k)}\int_{\R^{3n}}\Phi_k^N(x-u)(\Phi_j^{N+\gamma}(u-y_1)\Phi_j^{N+\gamma}(u-y_2)+\Phi_j^{N+\gamma}(x-y_1)\Phi_j^{N+\gamma}(x-y_2))\\
&\hspace{8cm}\times|f_1(y_1)f_2(y_2)|du\,dy_1\,dy_2\\
&\hspace{0cm}=2^{\gamma(j-k)}\int_{\R^n}\Phi_k^N(x-u)\prod_{i=1}^2\(\Phi_j^N*|f_i|(u)+\Phi_j^N*|f_i|(x)\)du\\
&\hspace{0cm}\less2^{\gamma(j-k)}\mathcal M(\mathcal Mf_1\cdot\mathcal  Mf_2)(x).
\end{align*}
We also have
\begin{align*}
&|\Lambda_kM_b\Theta_j(f_1,f_2)(x)|\\
&\hspace{1cm}\leq\int_{\R^{2n}}\left|\int_{\R^n}\(\lambda_k(x,u)-\lambda_k(x,y_1)\)b(u)\theta_j(u,y_1,y_2)du\right||f_1(y_1)f_2(y_2)|dy_1\,dy_2\\
&\hspace{1cm}\less 2^{(k-j)\gamma}\int_{\R^{3n}}\(\Phi_k^N(x-u)+\Phi_k^N(x-y_1)\)\prod_{i=1}^2\Phi_j^N(u-y_i)|f_i(y_i)|dy_i\,du\\
&\hspace{1cm}\leq 2^{(k-j)\gamma}\int_{\R^n}\Phi_k^N(x-u)\prod_{i=1}^2\Phi_j^N*|f_i|(u)du\\
&\hspace{2cm}+2^{(k-j)\gamma}\int_{|x-y_1|\geq|x-u|/2}\Phi_k^N(x-y_1)\prod_{i=1}^2\Phi_j^N(u-y_i)|f_i(y_i)|dy_i\,du\\
&\hspace{2.5cm}+2^{(k-j)\gamma}\int_{|x-y_1|<|x-u|/2}\Phi_k^N(x-y_1)\prod_{i=1}^2\Phi_j^N(u-y_i)|f_i(y_i)|dy_i\,du\\
&\hspace{1cm}=2^{(k-j)\gamma}(I+II+III).
\end{align*}
Note that $2^{(k-j)\gamma}I\less\mathcal M(\mathcal Mf_1\cdot \mathcal Mf_2)(x)$, which is on the right hand side of \eqref{adjointoperatorAO}.  In $II$, replace $\Phi_k^N(x-y_1)$ with $\Phi_k^N((x-u)/2)$ and it follows that $II\less I$.  So $II$ is bounded appropriately as well.  The final term, $III$ is bounded by
\begin{align*}
&\int_{|x-y_1|<|x-u|/2}\Phi_k^N(x-y_1)\frac{2^{jn}|f_1(y_1)|}{(1+2^j(|x-u|-|x-y_1|))^N}\Phi_j^N*|f_2|(u)dy_1\,du\\
&\hspace{1.5cm}\less\int_{|x-y_1|<|x-u|/2}\Phi_k^N(x-y_1)\Phi_j^N(x-u)|f_1(y_1)|\Phi_j^N*|f_2|(u)dy_1\,du\\
&\hspace{1.5cm}\less\(\int_{\R^{n}}\Phi_k^N(x-y_1)|f_1(y_1)|dy_1\)\(\int_{\R^n}\Phi_j^N(x-u)\Phi_j^N*|f_2|(u)\,du\)\\
&\hspace{1.5cm}\less\Phi_k^N*|f_1|(x)\,\Phi_j^N*|f_2|(x)\\
&\hspace{1.5cm}\leq \mathcal M(\mathcal Mf_1\cdot \mathcal Mf_2)(x).
\end{align*}
This verifies that \eqref{adjointoperatorAO} holds.  For estimate \eqref{operatorAO} when $j\leq k$, we use that $\Lambda_k^1(b_1)\cdot\Lambda_k^2(b_2)=0$ and Proposition \ref{p:smoothAO}
\begin{align*}
&|\Theta_j(M_{b_1}\Lambda_k^{1\,*}f_1,M_{b_2}\Lambda_k^{2\,*}f_2)(x)|\\
&\hspace{1cm}\leq\int_{\R^{4n}}|\theta_j(x,u_1,u_2)-\theta_j(x,y_1,y_2)|\prod_{i=1}^2|b_i(u)\lambda_k^i(y_i,u_i)f_i(y_i)|dy_i\,du_i\\
&\hspace{1cm}\less2^{\gamma(j-k)}\int_{\R^{4n}}\prod_{i=1}^2\(\Phi_j^N(x-u_i)+\Phi_j^N(x-y_i)\)\Phi_k^N(u_i-y_i)|f_i(y_i)|du_i\,dy_i\\
&\hspace{1cm}\less2^{\gamma(j-k)}\prod_{i=1}^2\int_{\R^n}\(\Phi_j^N(x-y_i)+\Phi_k^N(x-y_i)\)|f_i(y_i)|dy_i\\
&\hspace{1cm}\less2^{\gamma(j-k)}\mathcal Mf_1(x)\mathcal Mf_2(x).
\end{align*}
Finally using that $\Theta_j(b_1,b_2)=0$, it follows that
\begin{align*}
&|\Theta_j(M_{b_1}\Lambda_k^{1\,*}f_1,M_{b_2}\Lambda_k^{2\,*}f_2)(x)|\\
&\hspace{.25cm}\leq\int_{\R^{4n}}|\theta_j(x,u_1,u_2)|\left|\prod_{i=1}^2\lambda_k^i(y_i,u_i)-\prod_{i=1}^m\lambda_k^i(y_i,x)\right|\prod_{i=1}^2|b_i(u)f_i(y_i)|dy_i\,du_i\\
&\hspace{.25cm}\less\int_{\R^{2n}}\(\int_{\R^{2n}}\left|\prod_{i=1}^2\lambda_k^i(y_i,u_i)-\prod_{i=1}^2\lambda_k^i(y_i,x)\right|\prod_{i=1}^2\Phi_j^{N+\gamma}(u_i-y_i)du_i\)\prod_{i=1}^m|f_i(y_i)|dy_i\\
&\hspace{.25cm}\less2^{\gamma(k-j)}\(\Phi_j^N*|f_1|(x)+\Phi_k^N*|f_1|(x)\)\(\Phi_j^N*|f_2|(x)+\Phi_k^N*|f_2|(x)\)\\
&\hspace{.25cm}\less2^{\gamma(k-j)}\mathcal Mf_1(x)\mathcal Mf_2(x).
\end{align*}
Note that we use Remark \ref{r:prodBLPK} to see that $\lambda_k^1(x,y_1)\lambda_k^2(x,y_2)$ form a collection of kernels of type $BLPK$.  Then \eqref{operatorAO} holds as well.
\end{proof}

\section{Convergence Results}

In this section, we prove convergence results for various function spaces.  Most of these results are well known, see e.g. the work of Davide-Journ\'e-Semmes \cite{DJS} or Han \cite{Ha}, but for convenience we include them here.  We also introduce a criterion for extending the convergence of some reproducing formulas in $L^p$ for to convergence in $H^1$.

\subsection{Approximation to Identities}

\begin{proposition}\label{p:apptoid}
Suppose $p_k:\R^{2n}\rightarrow\C$ for $k\in\Z$ satisfy $|p_k(x,y)|\less\Phi_k^N(x-y)$ and $N>n$, and define $P_k$
\begin{align*}
P_kf(x)=\int_{\R^n}p_k(x,y)f(y)dy
\end{align*}
for $f\in L^1+L^\infty$.  If
\begin{align*}
\int_{\R^n}p_k(x,y)dy=1
\end{align*}
for all $k\in\Z$ and $x\in\R^n$, then $P_kf\rightarrow f$ in $L^p$ as $k\rightarrow\infty$ for all $f\in L^p$ when $1\leq p<\infty$ and $P_kf\rightarrow0$ in $L^p$ as $k\rightarrow-\infty$ for all $f\in L^p\cap L^q$ for $1\leq q<p<\infty$.
\end{proposition}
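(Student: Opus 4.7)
The strategy is the standard density plus interpolation approach, using the pointwise bound $|p_k(x,y)|\less\Phi_k^N(x-y)$ with $N>n$ as the only quantitative input. As a preliminary step I would establish the uniform operator bound $||P_kf||_{L^p}\less||f||_{L^p}$ for every $1\leq p\leq\infty$ and every $k\in\Z$ by Schur's test: the symmetry of $\Phi_k^N$ and $N>n$ make both $\sup_x\int|p_k(x,y)|\,dy$ and $\sup_y\int|p_k(x,y)|\,dx$ uniformly bounded in $k$, since $\Phi_k^N$ is an $L^1$-preserving dilation of a fixed integrable function.

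For the convergence as $k\ra\infty$, the uniform bound together with density of $C_0^\infty(\R^n)$ in $L^p$ (for $1\leq p<\infty$) reduces the problem to showing $||P_kg-g||_{L^p}\ra 0$ for $g\in C_0^\infty$. The cancellation $\int p_k(x,y)\,dy=1$ lets me write
\begin{align*}
P_kg(x)-g(x)=\int_{\R^n}p_k(x,y)\bigl(g(y)-g(x)\bigr)\,dy,
\end{align*}
and split the $y$-integral at $|y-x|\leq\epsilon$ vs.\ $|y-x|>\epsilon$. On the inner region the smoothness of $g$ gives a contribution of size $\epsilon||\nabla g||_{L^\infty}\int|p_k|\,dy\less\epsilon$, uniformly in $x$ and $k$; on the outer region the decay of $\Phi_k^N$ gives $||g||_{L^\infty}\int_{|z|>\epsilon}\Phi_k^N(z)\,dz\less||g||_{L^\infty}(2^k\epsilon)^{n-N}$, which tends to $0$ as $k\ra\infty$ for each fixed $\epsilon$. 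This yields uniform, hence $L^p$, convergence on any fixed ball. Off a neighborhood of $\supp g$ one has $g(x)=0$ and $|P_kg(x)|\less\Phi_k^N(\text{dist}(x,\supp g))$, which is dominated uniformly in $k\geq 1$ by a constant multiple of $|x|^{-N}$ (an $L^p$ function at infinity since $Np>n$) and tends pointwise to $0$, so dominated convergence completes the off-support estimate.

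For the convergence as $k\ra-\infty$ with $f\in L^p\cap L^q$, $1\leq q<p<\infty$, no cancellation is needed; the key is that $\Phi_k^N$ flattens in $L^\infty$ at negative scales. H\"older's inequality combined with the scaling identity $||\Phi_k^N||_{L^{q'}}\less 2^{kn/q}$ (valid because $Nq'>n$ for all $q\geq 1$) gives $||P_kf||_{L^\infty}\less 2^{kn/q}||f||_{L^q}$. Combining this with the uniform $L^q$-bound $||P_kf||_{L^q}\less||f||_{L^q}$ and log-convexity of $L^r$ norms,
\begin{align*}
||P_kf||_{L^p}\leq||P_kf||_{L^\infty}^{1-q/p}||P_kf||_{L^q}^{q/p}\less 2^{kn(p-q)/(pq)}||f||_{L^q},
\end{align*}
which tends to $0$ as $k\ra-\infty$ since $p>q$. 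No step is really difficult; the only mildly delicate point is the off-support $L^p$ control in the $k\ra\infty$ case, but this reduces to the elementary pointwise decay of $\Phi_k^N$ noted above.
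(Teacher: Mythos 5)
Your proof is correct, but it takes a genuinely different route from the paper's in both halves. For $k\rightarrow\infty$ the paper does not use density: it writes $P_kf(x)-f(x)=\int p_k(x,y)(f(y)-f(x))\,dy$, changes variables $y\mapsto x-2^{-k}y$, applies Minkowski's integral inequality to get $\|P_kf-f\|_{L^p}\less\int\Phi_0^N(y)\,\|f(\cdot-2^{-k}y)-f\|_{L^p}\,dy$, and concludes by dominated convergence together with continuity of translation in $L^p$; this treats an arbitrary $f\in L^p$ in one stroke and avoids the off-support tail estimate that your density argument forces you to supply (and which you do supply correctly). For $k\rightarrow-\infty$ both arguments start from the same bound $\|P_kf\|_{L^\infty}\less 2^{kn/q}\|f\|_{L^q}$, but the paper only uses it to get $P_kf\rightarrow 0$ almost everywhere and then invokes the pointwise domination $|P_kf|\less\mathcal Mf\in L^p$ (using $p>1$) plus dominated convergence, whereas you interpolate the $L^\infty$ decay against the uniform $L^q$ bound to obtain the explicit rate $\|P_kf\|_{L^p}\less 2^{kn(p-q)/(pq)}\|f\|_{L^q}$. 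Your version is quantitative and bypasses the Hardy--Littlewood maximal theorem, at the cost of a slightly longer argument in the first half; both are complete and correct.
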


\begin{proof}
For $f\in L^p$ with $1\leq p<\infty$
\begin{align*}
||P_kf-f||_{L^p}&=\(\int_{\R^n}\left|\int_{\R^n}p_k(x,y)f(y)dy-\int_{\R^n}p_k(x,y)f(x)dy\right|^pdx\)^\frac{1}{p}\\
&=\(\int_{\R^n}\left|\int_{\R^n}p_k(x,x-2^{-k}y)(f(x-2^{-k}y)-f(x))2^{-kn}dy\right|^pdx\)^\frac{1}{p}\\
&\less\int_{\R^n}\(\int_{\R^n}\Phi_0^N(y)|f(x-2^{-k}y)-f(x)|^pdx\)^\frac{1}{p}dy\\
&\less \int_{\R^n}\Phi_0^N(y)||f(\cdot-2^{-k}y)-f||_{L^p}dy.
\end{align*}
Note that $\Phi_0^N(y)||f(\cdot-2^{-k}y)-f||_{L^p}\leq2||f||_{L^p}\Phi_0^N(y)$ which is an $L^1(\R^n)$ function independent of $k$.  So by Lebesgue dominated convergence and the continuity of translation in $||\cdot||_{L^p}$,
\begin{align*}
\lim_{k\rightarrow\infty}||P_kf-f||_{L^p}\leq \int_{\R^n}\Phi_0^N(y)\lim_{k\rightarrow\infty}||f(\cdot-2^{-k}y)-f||_{L^p}dy=0.
\end{align*}
Next we compute
\begin{align*}
|P_kf(x)|\less||\Phi_k^N||_{L^{q'}}||f||_{L^q}=2^{kn/q}||\Phi_0^N||_{L^{q'}}||f||_{L^q}.
\end{align*}
So $P_kf\rightarrow0$ almost everywhere as $k\rightarrow-\infty$.  We also have
\begin{align*}
|P_kf(x)|&\less\Phi_k^N*|f|(x)\less\mathcal Mf(x),
\end{align*}
and since $f\in L^p(\R^n)$, it follows that $\mathcal Mf\in L^p(\R^n)$ as well when $1<p<\infty$.  So by dominated convergence
\begin{align*}
\lim_{k\rightarrow-\infty}||P_kf||_{L^p}^p&=\int_{\R^n}\lim_{k\rightarrow\infty}|P_kf(x)|^pdx=0.
\end{align*}
This proves the proposition.
\end{proof}

\begin{corollary}\label{c:accapptoid}
Let $b$ be a para-accretive function.  Suppose $s_k:\R^{2n}\rightarrow\C$ for $k\in\Z$ satisfy $|s_k(x,y)|\less\Phi_k^N(x-y)$ for some $N>n$, and define $S_k$
\begin{align*}
S_kf(x)=\int_{\R^n}s_k(x,y)f(y)dy
\end{align*}
for $f\in L^1+L^\infty$.  If
\begin{align*}
\int_{\R^n}s_k(x,y)b(y)dy=1
\end{align*}
for all $k\in\Z$ and $x\in\R^n$, then $S_kM_bf\rightarrow f$ and $M_bS_kf\rightarrow f$ in $L^p$ as $k\rightarrow\infty$ for all $f\in L^p(\R^n)$ when $1\leq p<\infty$.  Also $S_kM_bf\rightarrow0$ and $M_bS_kf\rightarrow0$ in $L^p$ as $k\rightarrow-\infty$ for all $f\in L^p\cap L^q$ for $1\leq q<p<\infty$.
\end{corollary}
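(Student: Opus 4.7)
The plan is to reduce both convergence statements to Proposition \ref{p:apptoid} by judiciously absorbing the accretive function $b$ into the kernel $s_k$ (or into $f$). The para-accretivity assumption is used only through the fact that both $b$ and $b^{-1}$ lie in $L^\infty$, so multiplication by $b$ and by $b^{-1}$ are bounded operators on every $L^p$.

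First, for the statements about $S_kM_bf$, define $p_k(x,y)=s_k(x,y)b(y)$. Then $|p_k(x,y)|\le\|b\|_{L^\infty}|s_k(x,y)|\lesssim\Phi_k^N(x-y)$, and by hypothesis $\int_{\R^n}p_k(x,y)\,dy=\int_{\R^n}s_k(x,y)b(y)\,dy=1$ for every $k,x$. The associated operator $P_k$ coincides exactly with $S_kM_b$, so Proposition \ref{p:apptoid} gives $S_kM_bf\to f$ in $L^p$ as $k\to\infty$ for $f\in L^p$, $1\le p<\infty$, and $S_kM_bf\to 0$ in $L^p$ as $k\to-\infty$ for $f\in L^p\cap L^q$ with $1\le q<p<\infty$.

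Second, for the statements about $M_bS_kf$, set $\tilde f=f/b$. Since $b^{-1}\in L^\infty$, $\tilde f\in L^p$ whenever $f\in L^p$ and $\tilde f\in L^p\cap L^q$ whenever $f\in L^p\cap L^q$, with norms controlled by $\|b^{-1}\|_{L^\infty}$ times the corresponding norms of $f$. Rewriting
\[
S_kf(x)=\int_{\R^n}s_k(x,y)b(y)\,\tilde f(y)\,dy=P_k\tilde f(x),
\]
Proposition \ref{p:apptoid} applied to $\tilde f$ gives $S_kf\to\tilde f=f/b$ in $L^p$ as $k\to\infty$, and $S_kf\to 0$ in $L^p$ as $k\to-\infty$ (in the respective ranges of $p$). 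Because multiplication by $b$ is bounded on $L^p$, it follows that $M_bS_kf\to b\cdot(f/b)=f$ in $L^p$ as $k\to\infty$ and $M_bS_kf\to 0$ in $L^p$ as $k\to-\infty$.

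There is essentially no serious obstacle here: the entire content of the corollary is that the identity $\int s_k(x,y)b(y)\,dy=1$ plays the role of the usual mean-value-one condition once one moves the factor $b$ either onto $f$ (to handle $S_kM_b$) or off of $f$ via $f=b\cdot(f/b)$ (to handle $M_bS_k$). The only minor technical point to verify is that $f/b\in L^p\cap L^q$ when $f\in L^p\cap L^q$, which is immediate from $b^{-1}\in L^\infty$.
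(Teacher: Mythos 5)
Your proof is correct and follows essentially the same approach as the paper: define $p_k(x,y)=s_k(x,y)b(y)$, check it satisfies the hypotheses of Proposition \ref{p:apptoid}, and reduce $M_bS_kf$ to $M_bP_k(b^{-1}f)$ using the boundedness of multiplication by $b$ and $b^{-1}$ on $L^p$. The only cosmetic difference is that you spell out the $\tilde f=f/b$ substitution and the final multiplication by $b$ slightly more explicitly than the paper does.
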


\begin{proof}
Define $P_kf=S_kM_bf$ with kernel $p_k$.  It is obvious that $|p_k(x,y)|\less\Phi_k^N(x-y)$, and $P_k(1)=S_k(b)=1$.  So by Proposition \ref{p:apptoid}, since $f\in L^p$ it follows that $S_kM_bf=P_kf\rightarrow f$ in $L^p$ when $f\in L^p$ and $1\leq p<\infty$.  Also when $f\in L^p\cap L^q$ fo $1\leq q<p<\infty$, it follows that $S_kM_bf=P_kf\rightarrow0$ as $k\rightarrow-\infty$.  Also $M_bS_kf=M_bP_k(b^{-1}f)$, so the same convergence properties hold for $M_bS_k$.
\end{proof}

These approximation to identities perturbed by para-accretive functions are important to this work.  They have been studied in depth by David-Journ\'e-Semmes \cite{DJS} and Han \cite{Ha}, among others. 

\begin{definition}
Let $b\in L^\infty$ be a para-accretive function.  A collection of operators $\{S_k\}_{k\in\Z}$ defined by
\begin{align*}
S_kf(x)=\int_{\R^n}s_k(x,y)f(y)dy
\end{align*}
for kernel functions $s_k:\R^{2n}\rightarrow\C$ is an approximation to identity with respect to $b$ if $\{s_k\}\in SLPK$, and
\begin{align*}
&|s_k(x,y)-s_k(x',y)-s_k(x,y')+s_k(x',y')|\leq A2^{kn}(2^k|x-x'|)^\gamma(2^k|y-y'|)^\gamma\\
&\hspace{3.5cm}\times\(\Phi_k^{N+\gamma}(x-y)+\Phi_k^{N+\gamma}(x'-y)+\Phi_k^{N+\gamma}(x-y')+\Phi_k^{N+\gamma}(x'-y')\)\\
&\int_{\R^n}s_k(x,y)b(y)dy=\int_{\R^n}s_k(x,y)b(x)dx=1.
\end{align*}
We say that an approximation to identity with respect to $b$ has compactly supported kernel if $s_k(x,y)=0$ whenever $|x-y|>2^{-k}$.
\end{definition}

\begin{remark}\label{r:accapptoid}
Given a para-accretive function $b$, we define a particular approximation to the identity with respect to $b$.  Let $\varphi\in\C_0^\infty$ be radial with integral $1$ and $\supp(\varphi)\subset B(0,1/8)$.  Define $S_k^b=P_kM_{(P_kb)^{-1}}P_k$.  It follows that $S_k^b$ is an approximation to identity with respect to $b$.  Furthermore, $S_k^b$ is self-transpose and has compactly supported kernel.  It is not trivial to see that $M_{(P_kb)^{-1}}$ is well a defined operator, but it was proved in \cite{DJS} that whenever $b$ is a para-accretive function there exists $\epsilon>0$ such that $|P_kb|\geq \epsilon>0$ uniformly in $k$.  With this fact, the proof of this remark easily follows.
\end{remark}

\begin{proposition}\label{p:smoothaccconvergence}
Let $b$ be a para-accretive function, $\{S_k\}$ be the approximation to identity with respect to $b$ that has compactly supported kernel, and $\delta_0>0$.  Then $M_bS_NM_bf\rightarrow bf$ and $M_bS_{-N}M_bf\rightarrow0$ in $bC_0^\delta$ as $N\rightarrow\infty$ for all $f\in C_0^{\delta_0}$ and $0<\delta<\min(\delta_0,\gamma)$, where $\gamma$ is the smoothness parameter for $\{s_k\}\in SLPK$.  In particular these convergence results hold for the operators defined in Remark \ref{r:accapptoid}.
\end{proposition}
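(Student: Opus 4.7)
The plan is to translate the $bC_0^\delta$ convergence into estimates on the Hölder seminorm: since $\|bg\|_{b,\delta}=\|g\|_\delta$, the two conclusions amount to $\|S_NM_bf-f\|_\delta\to0$ and $\|S_{-N}M_bf\|_\delta\to0$ as $N\to\infty$. Compact support of each $S_{\pm N}M_bf$ is immediate because $f$ has compact support and $s_k$ is supported in $\{|x-y|\le 2^{-k}\}$. The three ingredients I will use repeatedly are (i) the normalization $\int s_k(x,y)b(y)\,dy=1$, (ii) the compact support of the kernel, and (iii) the SLPK $x$-regularity of $\{s_k\}$.

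For the first claim, I would use (i) to write
\[
S_NM_bf(x)-f(x)=\int s_N(x,y)\,b(y)\,(f(y)-f(x))\,dy,
\]
and then use (ii) together with $f\in C^{\delta_0}$ to obtain the pointwise bound $|S_NM_bf(x)-f(x)|\lesssim \|b\|_\infty\|f\|_{\delta_0}2^{-N\delta_0}$. To bound the Hölder quotient at $x\neq x'$, split into cases. If $|x-x'|\ge 2^{-N}$, apply the pointwise bound at each endpoint and divide by $|x-x'|^\delta\ge 2^{-N\delta}$ to get a contribution $\lesssim \|f\|_{\delta_0}\,2^{-N(\delta_0-\delta)}$. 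If $|x-x'|<2^{-N}$, use (i) at $x'$ too to rewrite
\[
(S_NM_bf-f)(x)-(S_NM_bf-f)(x')=\int(s_N(x,y)-s_N(x',y))\,b(y)\,(f(y)-f(x))\,dy-(f(x)-f(x')).
\]
The last term contributes $\|f\|_{\delta_0}|x-x'|^{\delta_0-\delta}\le\|f\|_{\delta_0}2^{-N(\delta_0-\delta)}$. For the integral term, (iii) gives $|s_N(x,y)-s_N(x',y)|\lesssim(2^N|x-x'|)^\gamma(\Phi_N^{N+\gamma}(x-y)+\Phi_N^{N+\gamma}(x'-y))$, and on the support (i.e. $|y-x|\le 2\cdot 2^{-N}$) one has $|f(y)-f(x)|\lesssim\|f\|_{\delta_0}2^{-N\delta_0}$. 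Integrating $\Phi_N^{N+\gamma}$ to a constant and dividing by $|x-x'|^\delta$ yields $\lesssim 2^{N(\gamma-\delta_0)}|x-x'|^{\gamma-\delta}\lesssim 2^{-N(\delta_0-\delta)}$, using $\delta<\gamma$ and $|x-x'|<2^{-N}$.

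For the second claim, I bound $S_{-N}M_bf$ directly. The kernel satisfies $|s_{-N}(x,y)|\lesssim 2^{-Nn}$ uniformly, so $|S_{-N}M_bf(x)|\lesssim 2^{-Nn}\|b\|_\infty\|f\|_{L^1}$. For the Hölder quotient, if $|x-x'|\ge 2^N$ the doubled pointwise bound divided by $|x-x'|^\delta\ge 2^{N\delta}$ gives $\lesssim 2^{-N(n+\delta)}$; if $|x-x'|<2^N$, (iii) provides $|s_{-N}(x,y)-s_{-N}(x',y)|\lesssim(2^{-N}|x-x'|)^\gamma 2^{-Nn}$, which integrates to $\lesssim(2^{-N}|x-x'|)^\gamma 2^{-Nn}\|b\|_\infty\|f\|_{L^1}$, and dividing by $|x-x'|^\delta$ and using $|x-x'|<2^N$, $\delta<\gamma$, again gives $\lesssim 2^{-N(n+\delta)}$.

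There is no deep obstacle; the main technical point is organizing the case split at the threshold $|x-x'|\sim 2^{\mp N}$ so that in the near-diagonal case one exploits the SLPK smoothness of $s_k$ together with either the Hölder modulus of $f$ (for $N\to\infty$) or the $L^1$-norm of $f$ against the small sup-norm $2^{-Nn}$ of $s_{-N}$ (for $N\to-\infty$), while in the far-diagonal case one discards regularity and converts the pointwise smallness directly into a Hölder quotient. Both estimates decay geometrically in $N$, which in particular shows $S_NM_bf,\,S_{-N}M_bf\in C_0^\delta$ and delivers the required convergence in $bC_0^\delta$.
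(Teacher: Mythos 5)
Your proof is correct and uses the same core ingredients as the paper (the normalization $\int s_k(x,y)b(y)\,dy=1$, compact support of the kernel, and the SLPK $x$-regularity), arriving at the same decay rate $2^{-N(\delta_0-\delta)}$. The only organizational difference is that you split into cases at $|x-x'|\sim 2^{\mp N}$, whereas the paper combines the near- and far-diagonal bounds via the interpolation inequality $\min(a,b)\leq a^{\delta/\delta_0}b^{1-\delta/\delta_0}$ applied pointwise to the integrand $F_N^x(u)=s_N(x,u)(f(u)-f(x))$; these are two packagings of the same estimate, and for the $S_{-N}$ part the paper even avoids the split by integrating the regularity bound directly against $\|f\|_{L^\infty}$ on the compact support of $s_{-N}$.
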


\begin{proof}
Let $f\in C_0^{\delta_0}$ and $0<\delta<\delta_0$.  Without loss of generality assume that $\gamma=\delta$, where $\gamma$ is the smoothness parameter of $s_k$.  We must check that $||S_NM_bf-f||_\delta\rightarrow0$ as $N\rightarrow\infty$.  So we start by estimating
\begin{align*}
&|(S_NM_bf(x)-f(x))-(S_NM_bf(y)-f(y))|\\
&\hspace{2cm}=\left|\int_{\R^n}(s_N(x,u)(f(u)-f(x))b(u)du-\int_{\R^n}(s_N(y,u)(f(u)-f(y))b(u)du\right|\\
&\hspace{2cm}\leq||b||_{L^\infty}\int_{\R^n}|F_N^x(u)-F_N^y(u)|du
\end{align*}
where $F_N^x(u)=s_N(x,u)(f(u)-f(x))$.  Consider $u\in B(y,2^{-N})$, and it follows that
\begin{align}
|F_N^x(u)-F_N^y(u)|&=|s_N(x,u)(f(u)-f(x))-s_N(y,u)(f(u)-f(y))|\notag\\
&\leq|s_N(x,u)|\;|f(y)-f(x)|+|s_N(x,u)-s_N(y,u)|\;|(f(u)-f(y))|\notag\\
&\less ||f||_{\delta_0}2^{nN}|x-y|^{\delta_0}+||f||_{\delta_0}2^{nN}(2^N|x-y|)^{\delta_0}|y-u|^{\delta_0}\label{est1}\\
&\less ||f||_{\delta_0}2^{nN}|x-y|^{\delta_0}\notag
\end{align}
With a similar argument, it follows that for $u\in B(x,2^{-N})$, $|F_N^x(u)-F_N^y(u)|\less ||f||_{\delta_0}2^{nN}|x-y|^{\delta_0}$.  Now we may also estimate $|F_N^x(u)|$ in the following way for $u\in B(x,2^{-N})$,
\begin{align}
|F_N^x(u)|&\less2^{nN}|f(u)-f(x)|\leq ||f||_{\delta_0}2^{nN}|u-x|^{\delta_0}\leq ||f||_{\delta_0}2^{nN}2^{-\delta_0N}.\label{est2}
\end{align}
Using the support properties of $s_k$, we have that $\supp(F_N^x-F_N^y)\subset B(x,2^{-N})\cup B(y,2^{-N})$.  Then it follows from \eqref{est1}, \eqref{est2}, and $\frac{\delta}{\delta_0}\in(0,1)$ that
\begin{align*}
|F_N^x(u)-F_N^y(u)|&\less\(||f||_{\delta_0}2^{nN}|x-y|^{\delta_0}\)^\frac{\delta}{\delta_0}\(||f||_{\delta_0}2^{nN}2^{-\delta_0N}\)^{1-\frac{\delta}{\delta_0}}\\
&\less ||f||_{\delta_0}2^{nN}|x-y|^\delta2^{-(\delta_0-\delta)N}.
\end{align*}
Therefore $S_NM_bf\rightarrow f$ in $||\cdot||_\delta$ since
\begin{align*}
\frac{|(S_NM_bf(x)-f(x))-(S_NM_bf(y)-f(y))|}{|x-y|^\delta}&\leq\frac{1}{|x-y|^\delta}\int_{\R^n}|F_N^x(u)-F_N^y(u)|du\\
&\less ||f||_{\delta_0}2^{-(\delta_0-\delta)N}\int_{B(x,2^{-N})\cup B(y,2^{-N})}2^{nN}du\\
&\less||f||_{\delta_0}2^{-(\delta_0-\delta)N}.
\end{align*}
This proves that $S_NM_bf\rightarrow f$ in $C_0^\delta$ as $N\rightarrow\infty$.  Now we consider $S_{-N}M_bf$ as $N\rightarrow\infty$.  We also have
\begin{align*}
\frac{|S_{-N}M_bf(x)-S_{-N}M_bf(y)|}{|x-y|^\delta}&\leq\frac{1}{|x-y|^\delta}\int_{\R^n}|s_{-N}(x,u)-s_{-N}(y,u)|\;|b(u)f(u)|du\\
&\less\frac{||f||_{L^\infty}}{|x-y|^\delta}\(\int_{|x-u|<2^N}+\int_{|y-u|<2^N}\)2^{-nN}(2^{-N}|x-y|)^\delta du\\
&\less||f||_{L^\infty}2^{-\delta N}.
\end{align*}
Note that $||f||_{L^\infty}<\infty$ since $f$ is continuous and compactly supported.  Therefore $S_NM_bf\rightarrow f$ and $S_{-N}M_bf\rightarrow0$ as $N\rightarrow\infty$ in the topology of $C_0^\delta$.
\end{proof}

\subsection{Reproducing Formulas}

We state a Calder\'on type reproducing formula for the para-accretive setting, which was constructed by Han in \cite{Ha}.

\begin{theorem}\label{t:Han}
Let $b\in L^\infty$ be a para-accretive function and $S_k^b$ for $k\in\Z$ be approximation to the identity operators with respect to $b$.  Define $D_k^b=S_{k+1}^b-S_k^b$.  There exist operators $\widetilde D_k^b$ such that
\begin{align}
\sum_{k\in\Z}\widetilde D_k^bM_bD_k^bM_bf=bf\label{hanformula}
\end{align}
in $L^p$ for all $1<p<\infty$ and $f\in C_0^\delta$ such that $bf$ has mean zero.  Furthermore, $\widetilde D_k^b(b)=\widetilde D_k^{b\,*}(b)=0$ and $\widetilde D_k^b$ is defined by
\begin{align*}
\widetilde D_k^bf(x)=\int_{\R^n}\widetilde d_k^b(x,y)f(y)dy
\end{align*}
where $\{\widetilde d_k^{b\,*}\}\in LPK$, where $\widetilde d_k^{b\,*}(x,y)=\widetilde d_k^b(y,x)$ are the kernels associated with $\widetilde D_k^{b\,*}$
\end{theorem}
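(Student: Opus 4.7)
The plan is to follow a Coifman-style construction of a discrete Calder\'on reproducing formula, adapted to the para-accretive setting. The starting point is a telescoping identity. Since each $S_k^b$ satisfies $\int s_k^b(x,y)b(y)\,dy=1=\int s_k^b(x,y)b(x)\,dx$, Corollary \ref{c:accapptoid} gives $M_bS_N^bM_bf\to bf$ in $L^p$ as $N\to\infty$, and because $f\in C_0^\delta\subset L^p\cap L^q$ for every $q$, also $M_bS_N^bM_bf\to 0$ in $L^p$ as $N\to-\infty$. Subtracting and telescoping yields the preliminary reproducing formula
\begin{equation*}
bf=\sum_{k\in\Z}M_bD_k^bM_bf\quad\text{in }L^p,\qquad D_k^b=S_{k+1}^b-S_k^b.
\end{equation*}
Here, however, the outer factor is the multiplication $M_b$, which carries no cancellation.

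The heart of the proof is to replace that outer $M_b$ by operators $\widetilde D_k^b$ that do carry two-sided Littlewood--Paley cancellation $\widetilde D_k^b(b)=\widetilde D_k^{b*}(b)=0$. Observe first that $D_k^b$ itself already satisfies $D_k^b b=D_k^{b*}b=0$, since both $S_k^b$ and $S_{k+1}^b$ reproduce $b$ on the relevant sides; thus $\{d_k^b\}\in LPK$ with two-sided cancellation. Writing $Q_k=M_bD_k^bM_b$, I would form an auxiliary operator
\begin{equation*}
T=\sum_{k\in\Z}Q_k M_{b^{-1}}Q_k^*,
\end{equation*}
whose off-diagonal pieces $Q_jM_{b^{-1}}Q_k^*$ decay like $2^{-\gamma|j-k|}$ by the operator almost-orthogonality estimate \eqref{operatorAO} of Proposition \ref{p:operatorAO}. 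Cotlar--Stein summation then shows $T$ is bounded on $L^2$, and a coercivity estimate coming from the same almost-orthogonality (paired with the telescoping identity applied to $T-cM_b$ for a suitable constant $c$) shows that $T$ is boundedly invertible on the subspace where $bg$ has mean zero. Setting $\widetilde D_k^b=T^{-1}Q_k M_{b^{-1}}$ (or a symmetric variant chosen so that $\widetilde D_k^b$ carries left cancellation) and composing with the preliminary formula yields $\sum_k\widetilde D_k^b Q_k f=bf$ for $f\in C_0^\delta$ with $bf$ of mean zero.

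The main obstacle is not the operator-norm identity but the verification that $\widetilde D_k^b$ inherits $LPK$ kernel estimates together with the two-sided cancellation $\widetilde D_k^b b=\widetilde D_k^{b*}b=0$. Boundedness of $T^{-1}$ on $L^2$ is insufficient; one must track how smoothness and decay propagate through the Neumann expansion of $T^{-1}$. The strategy is to expand $T^{-1}$ as an absolutely convergent sum of finite compositions of $Q_j M_{b^{-1}} Q_k^*$, repeatedly apply Proposition \ref{p:smoothAO} to absorb intermediate integrals, and collect the factors $2^{-\gamma|j_\ell-j_{\ell+1}|}$ at each link into a geometric series; the resulting single-kernel bound delivers $LPK(A',N',\gamma')$ estimates for $\widetilde D_k^b$ with slightly shrunken parameters. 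The two-sided cancellation of $\widetilde D_k^b$ is then inherited from that of $D_k^b$ sitting at the outermost positions of each composition, with the multiplication factors $M_{b^{-1}}$ turning the cancellation $D_k^b(b)=0$ into the corresponding property for $\widetilde D_k^b$. Convergence of the reproducing sum in $L^p$ at the end follows from vector-valued Littlewood--Paley theory applied to $\{Q_kf\}_{k\in\Z}$, which also supplies the square-function estimates needed to close the argument.
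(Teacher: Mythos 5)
The paper does not prove Theorem~\ref{t:Han}: it is imported from Han~\cite{Ha}, the preceding sentence stating that the formula ``was constructed by Han in \cite{Ha}.'' So there is no internal argument to compare against, and your sketch must stand alone---and its central step does not close. You form the exact-diagonal operator $T=\sum_k Q_kM_{b^{-1}}Q_k^*$ and assert that almost orthogonality together with the telescoping identity gives a ``coercivity estimate'' making $T$ invertible. But Proposition~\ref{p:operatorAO} supplies only \emph{upper} bounds. The telescoping identity gives, on mean-zero data, $\sum_{j,k}Q_jM_{b^{-1}}Q_k^*=M_b$, so $M_b-T=\sum_{m\ne0}\sum_kQ_{k+m}M_{b^{-1}}Q_k^*$, and Cotlar--Stein with the almost-orthogonality estimates bounds each inner sum by $\lesssim 2^{-\gamma|m|}$, whence $\|M_b-T\|\lesssim\sum_{m\ne0}2^{-\gamma|m|}$. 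That is a fixed constant, with no reason to be small relative to $\|b^{-1}\|_{L^\infty}^{-1}$, so no Neumann series converges and no lower bound on $T$ is produced; a genuine lower Littlewood--Paley inequality is not available without the very reproducing formula you are trying to build, so appealing to it would be circular. The device that actually makes the construction work---Coifman's trick, used by David--Journ\'e--Semmes and by Han---thickens the diagonal: one takes $T_N=\sum_{|j-k|\le N}Q_jM_{b^{-1}}Q_k^*$, for which $\|M_b-T_N\|\lesssim 2^{-\gamma N}$ is \emph{tunable}, and inverts $T_N$ for $N$ large. Your sketch has no band parameter $N$ and hence no mechanism for invertibility.

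Two further gaps remain even after that repair. First, the two-sided cancellation is not inherited for free: with $\widetilde D_k^b=T^{-1}Q_kM_{b^{-1}}$ one does get $\widetilde D_k^b(b)=T^{-1}M_bD_k^b(b)=0$, but the transpose satisfies $\widetilde D_k^{b*}(b)=M_{b^{-1}}Q_k^*T^{-*}(b)=D_k^{b*}M_bT^{-*}(b)$, which vanishes only if $T^{-*}(b)$ is a constant multiple of $1$---an algebraic property of the inverse that must be proved, not read off from $D_k^{b*}$ ``sitting at the outermost position.'' Second, the assertion that expanding $T^{-1}$ as a Neumann series and iterating Proposition~\ref{p:smoothAO} delivers $LPK$ bounds for $\widetilde d_k^b$ ``with slightly shrunken parameters'' is exactly where the hard analysis of~\cite{Ha} lives: one must show that the per-link geometric decay survives summation over all chain lengths and intermediate indices with constants uniform in $k$, and this does not follow merely from each finite composition satisfying kernel estimates.
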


We will use this formula extensively, and in fact, we need this formula in $H^1$ as well to construct the accretive type para-product in Section \ref{s:SIO}.  We will prove that this reproducing formula holds in $H^1$ in Theorem \ref{t:H1convergence} and its Corollary \ref{c:accH1}.  First we prove a lemma.  

\begin{lemma}\label{l:H1functions}
If $f:\R^n\rightarrow\C$ has mean zero and 
\begin{align*}
|f(x)|\less \Phi_j^N(x)+\Phi_k^N(x)
\end{align*}
for some $N>n$ and $j,k\in\Z$, then $f\in H^1$ and $||f||_{H^1}\less 1+|j-k|$, where the suppressed constant is independent of $j$ and $k$.
\end{lemma}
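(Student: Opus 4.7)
The plan is to prove the bound via the atomic characterization of $H^1$, which is equivalent (by classical theory) to the Riesz transform definition used in the paper. So it suffices to produce a decomposition $f=\sum_i \lambda_i a_i$ where each $a_i$ is an $H^1$-atom (mean zero, $\supp(a_i)\subset B_i$, $\|a_i\|_{L^\infty}\leq |B_i|^{-1}$) with $\sum_i |\lambda_i|\less 1+|j-k|$. Without loss of generality assume $j\leq k$.

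First, I would perform a dyadic annular decomposition centered at the origin (the common center of the two dominating bumps). Setting $B_i=B(0,2^{-i})$, write $f=\sum_{i\in\Z} f_i$ with $f_i=f\cdot\chi_{B_i\setminus B_{i+1}}$. A direct evaluation of $\Phi_j^N(x)+\Phi_k^N(x)$ at $|x|\sim 2^{-i}$ gives $\|f_i\|_{L^\infty}\less 2^{in}w_i$, where $w_i=1$ for $j\leq i\leq k$, $w_i=2^{-(j-i)(N-n)}$ for $i<j$, and $w_i=2^{-(i-k)n}$ for $i>k$. Thus each middle-scale piece already has the right pointwise size to be an $H^1$-atom up to a mean-zero correction, while the tail pieces decay geometrically.

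Second, I would correct each $f_i$ to have mean zero by a telescoping cascade of bump functions. Fix $\rho\in C_0^\infty$ with $\int \rho=1$ and $\supp(\rho)\subset B(0,1)$, set $\rho_i(x)=2^{in}\rho(2^ix)$, and define $E_i=\int_{|x|>2^{-i}} f(x)\,dx$. Using $\int f=0$ and the pointwise bound on $f$, one verifies $|E_i|\less w_i$ and $\int f_i=E_{i+1}-E_i$. Setting
\begin{align*}
\tilde f_i := f_i + E_i\rho_i - E_{i+1}\rho_{i+1},
\end{align*}
one obtains a mean-zero function supported in $B_i$ with $\|\tilde f_i\|_{L^\infty}\less 2^{in}w_i$. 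Since $|E_i|\to 0$ as $i\to\pm\infty$, the telescoping identity $\sum_i\tilde f_i=f$ holds in $L^1$. Hence each $\tilde f_i$ is $\less w_i$ times an $H^1$-atom, and
\begin{align*}
\|f\|_{H^1}\less \sum_i w_i \less 1+|j-k|,
\end{align*}
with the $O(|j-k|)$ coming from the middle scales and the $O(1)$ from the two geometric tails.

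The main technical point is arranging the mean-zero corrections so that they telescope (rather than accumulate) across the $|j-k|$ middle scales; the cascade $E_i\rho_i-E_{i+1}\rho_{i+1}$ is exactly what achieves this while keeping each atomic norm bounded by $w_i$. Everything else reduces to routine estimates with the moments and tails of $\Phi_j^N$ and $\Phi_k^N$.
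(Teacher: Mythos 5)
Your proof is correct and takes a genuinely different route from the paper. The paper estimates $\|R_\ell f\|_{L^1}$ directly: it splits at $|x|\leq 2^{-j}$, and on the ball uses H\"older together with the operator norm $\|R_\ell\|_{L^p\to L^p}\lesssim p'$ with the cleverly tuned exponent $p=1+\tfrac{1}{k-j}$ to produce the linear factor $k-j$; on the complement it performs a dyadic annular decomposition that exploits the mean-zero condition and the tail decay to get an $O(1)$ contribution. You instead build an explicit atomic decomposition: dyadic annular pieces $f_i$ corrected to mean zero by the telescoping cascade $E_i\rho_i-E_{i+1}\rho_{i+1}$. I checked the key estimates and they hold: $|E_i|\lesssim w_i$ in all three regimes (using $\int f=0$ to flip the integral to the ball for $i>k$ and to the complement for $i<j$), $w_{i+1}\lesssim_{N,n} w_i$ so the two bump corrections keep each $\tilde f_i$ at the right atomic size $\|\tilde f_i\|_{L^\infty}\lesssim 2^{in}w_i$, and the telescope collapses to $f$ in $L^1$ since $E_i\to 0$ as $i\to\pm\infty$. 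This yields $\sum_i w_i\lesssim 1+|j-k|$ and hence the linear bound, matching the paper.

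The comparison is worth noting in light of the paper's own remark: the author says his original atomic-decomposition argument in his thesis produced only the quadratic bound $|j-k|^2$, and that the linear bound presented is Stefanov's non-atomic proof. Your argument shows that a carefully arranged atomic decomposition does achieve the linear bound after all; the crucial ingredient you supply, and presumably what the thesis argument lacked, is the telescoping correction, which prevents the mean-zero repairs from accumulating across the $|j-k|$ intermediate scales. The dominant $O(|j-k|)$ cost in your decomposition is honest: for $j\leq i\leq k$ the quantities $|E_i|$ are genuinely $O(1)$ (the tails of $\Phi_j^N$ integrate to $O(1)$ over $|x|>2^{-i}$ once $i\gg j$), so each middle-scale correction really does contribute a constant and the sum really is linear. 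Trade-offs: the paper's route is self-contained within the Riesz-transform definition of $H^1$ used elsewhere in the article, whereas yours invokes the atomic characterization; on the other hand yours is more elementary (no sharp $L^p\to L^p$ constant for the Riesz transform, no optimization over $p$) and is perhaps more transparent about where the factor $|j-k|$ arises. Both are valid proofs of the same linear bound.
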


This is an extension of a result of Uchiyama \cite{U}, which is Lemma \ref{l:H1functions} when $j=k$.  Initially in \cite{Hart3}, we obtained a quadratic bound, $|j-k|^2$, for Lemma \ref{l:H1functions} using an argument involving atomic decompositions in $H^1$. Such a result suffices for our purposes, but thanks to suggestions from Atanas Stefanov we are able to obtain the linear bound stated here.   We present Stefanov's proof, which appears more natural.

\begin{proof}
The conclusion of Lemma \ref{l:H1functions} is well known for $j=k$, see e.g. the work of Uchiyama \cite{U} or Wilson \cite{W2}.  So without loss of generality we take $j\neq k$, and furthermore we suppose that $j<k$.  It is easy to see that 
\begin{align*}
||f||_{L^1}\less ||\Phi_j^N||_{L^1}+||\Phi_k^N||_{L^1}\less 1,
\end{align*}
so we may reduce the problem to proving that $||R_\ell f||_{L^1}\less k-j$ for $\ell=1,...,n$.  The strategy here is to split the norm $||R_\ell f||_{L^1}$ into two sets, where $|x|\leq2^{-j}$ and where $|x|>2^{-j}$.  We will control the first by $k-j$ and the second by $1$.  Define $p=1+\frac{1}{k-j}>1$, and use that $||R_\ell||_{L^p\rightarrow L^p}\less p'$ to estimate
\begin{align}
||\chi_{|x|\leq 2^{-j}}R_\ell f||_{L^1}&\leq||\chi_{|x|\leq 2^{-j}}||_{L^{p'}}||R_\ell f||_{L^p}\notag\\
&\less 2^{-nj/p'}p'||f||_{L^{P1}}\notag\\
&\less (k-j)2^{-nj/p'}\(2^{nj/p'}+2^{nk/p'}\)\less k-j.\label{x<1}
\end{align}
Note that here we use that $p'=k-j+1$ and hence $2^{n(k-j)/p'}\leq2^n$.  Now it remains to control
\begin{align}
||\chi_{|x|>2^{-j}}R_\ell f||_{L^1}&\leq\sum_{m=-j}^\infty||\chi_{2^{m}<|x|\leq2^{m+1}}R_\ell f||_{L^1}\notag\\
&\leq\sum_{m=-j}^\infty||\chi_{2^{m}<|x|\leq2^{m+1}}R_\ell (f\chi_{|y|\leq2^{m-1}})||_{L^1}\notag\\
&\hspace{1cm}+\sum_{m=-j}^\infty||\chi_{2^{m}<|x|\leq2^{m+1}}R_\ell (f\chi_{|y|>2^{m-1}})||_{L^1}=I+II.\label{x>1}
\end{align}
In order to estimate $I$ from \eqref{x>1}, we bound the terms of the sum by first breaking them into two pieces using the mean zero hypothesis on $f$:
\begin{align}
||\chi_{2^{m}<|x|\leq2^{m+1}}R_\ell (f\chi_{|y|\leq2^{m-1}})||_{L^1}&\hspace{0cm}=\int_{2^{m}<|x|\leq2^{m+1}}\left|R_\ell (f\chi_{|y|\leq2^{m-1}})(x)-\int_\R\frac{x_\ell}{|x|^{n+1}}f(y)dy\right|dx\notag\\
&\hspace{0cm}\leq\int_{|x|>2^{m}}\int_{|y|\leq2^{m-1}}\left|\frac{x_\ell-y_\ell}{|x-y|^{n+1}}-\frac{x_\ell}{|x|^{n+1}}\right||f(y)|dy\,dx\notag\\
&\hspace{1.5cm}+\int_{2^{m}<|x|\leq 2^{m+1}}\int_{|y|>2^{m-1}}\frac{|f(y)|}{|x|^n}dy\,dx=I_a+I_b.\label{twoterms}
\end{align}
Let $\delta=\min(1,(N-n)/2)$ and $N'=N-\delta>n$.  Then the first term of \eqref{twoterms} is bounded by
\begin{align*}
I_a\leq\int_{|x|>2^{m}}\int_{|y|\leq2^{m-1}}\frac{|y|}{|x|^{n+1}}|f(y)|dy\,dx&\leq\int_{|x|>2^{m}}\int_{|y|\leq2^{m-1}}\frac{|y|^\delta}{|x|^{n+\delta}}|f(y)|dy\,dx\\
&\hspace{0cm}\less 2^{-m\delta}\int_{\R}|y|^\delta\(\Phi_j^N(y)+\Phi_k^N(y)\)dy\\
&\hspace{0cm}\leq 2^{-m\delta}\int_{\R}\(2^{-j\delta}\Phi_j^{N'}(y)+2^{-k\delta}\Phi_k^{N'}(y)\)dy\\
&\hspace{0cm}\less 2^{-(j+m)\delta}.
\end{align*}
Note that we absorb the $2^{-k\delta}$ term into the $2^{-j\delta}$ term since $k>j$.  The second term of \eqref{twoterms} is bounded by
\begin{align*}
I_b\leq\int_{2^{m}<|x|\leq 2^{m+1}}\int_{|y|>2^{m-1}}\frac{1}{|x|^n}|f(y)|dy\,dx&\leq2^{-mn}\int_{2^{m}<|x|\leq 2^{m+1}}\int_{|y|>2^{m-1}}|f(y)|dy\,dx\\
&\leq \int_{|y|>2^{m-1}}\(\frac{2^{-j(N-n)}}{|y|^N}+\frac{2^{-k(N-n)}}{|y|^N}\)dy\\
&\less 2^{-(j+m)(N-n)}+2^{-(k+m)(N-n)}\\
&\less 2^{-(j+m)(N-n)}.
\end{align*}
Again we use that $2^{-k(N-n)}\leq2^{-j(N-n)}$ since $k>j$ and $N>n$.  Now in order to estimate $II$ from \eqref{x>1}, we bound the terms of the sum using an $L^2$ bound for $R_\ell$
\begin{align*}
||\chi_{2^{m}<|x|\leq2^{m+1}}R_\ell(f\chi_{|y|>2^{m-1}})||_{L^1}&\leq||\chi_{2^{m}<|x|\leq2^{m+1}}||_{L^2}||R_\ell (f\chi_{|y|>2^{m-1}})||_{L^2}\\
&\hspace{-2cm}\less 2^{mn/2}\(\int_{|y|>2^{m-1}}\(\Phi_j^N(y)+\Phi_k^N(y)\)^2dy\)^\frac{1}{2}\\
&\hspace{-2cm}\leq 2^{mn/2}\(\int_{|y|>2^{m-1}}\[\frac{2^{2j(n-N)}}{|y|^{2N}}+\frac{2^{2k(n-N)}}{|y|^{2N}}\]dy\)^\frac{1}{2}\\
&\hspace{-2cm}\less 2^{mn/2}\(2^{-j(N-n)}+2^{-k(N-n)}\)\(\int_{|y|>2^{m-1}}\frac{1}{|y|^{2N}}dy\)^\frac{1}{2}\\
&\hspace{-2cm}\less 2^{-(j+m)(N-n)}.
\end{align*}
Using these estimates, it follows that \eqref{x>1} is bounded in the following way:
\begin{align*}
I+II&\less \sum_{m=-j}^\infty2^{-(j+m)\delta}+ \sum_{m=-j}^\infty2^{-(j+m)(N-n)}\less 1.
\end{align*}
Therefore using \eqref{x<1} and \eqref{x>1}, it follows that $||R_\ell f||_{L^1}\less k-j$ for $\ell=1,...,n$ and hence $||f||_{H^1}\less k-j$.
\end{proof}

Now we prove Theorem \ref{t:H1convergence}.

\begin{proof}
Define for $k\in\Z$, $f_k(x)=M_b\Theta_kM_b\phi$.  It easily follows that
\begin{align*}
\int_{\R^n}f_k(x)dx&=\int_{\R^n}M_b\phi(x)\Theta_k^*b(x)dx=0.
\end{align*}
Let $R$ be large enough so that $\supp(\phi)\subset B(0,R)$.  We estimate
\begin{align*}
|f_k(x)|&\leq||b||_{L^\infty}\left|\int_{\R^n}(\theta_k(x,y)-\theta_k(x,0))b(y)\phi(y)dy\right|\\
&\less\int_{\R^n}(2^k|y|)^\gamma\(\Phi_k^N(x-y)+\Phi_k^N(x)\)|\phi(y)|dy\\
&\less 2^{\gamma k} R^\gamma\(\Phi_k^N*\Phi_0^{N}(x)+\Phi_k^N(x)\)\\
&\less 2^{\gamma k}\(\Phi_0^{N}(x)+\Phi_k^{N}(x)\).
\end{align*}
We also estimate
\begin{align*}
|f_k(x)|&\leq||b||_{L^\infty}\left|\int_{\R^n}\theta_k(x,y)b(y)(\phi(y)-\phi(x))dy\right|\\
&\less\int_{\R^n}\Phi_k^{N+\gamma}(x-y)|x-y|^\gamma(\Phi_0^N(y)+\Phi_0^N(x))dy\\
&\less2^{-\gamma k}\int_{\R^n}\Phi_k^N(x-y)(\Phi_0^N(y)+\Phi_0^N(x))dy\\
&\less2^{-\gamma k}\(\Phi_0^{N}(x)+\Phi_k^{N}(x)\).
\end{align*}
So we have proved that $|f(x)|\less2^{-\gamma|k|}(\Phi_0^{N}(x)+\Phi_k^{N}(x))$.  It follows from Lemma \ref{l:H1functions} applied with $j=0$ that 
\begin{align*}
||f_k||_{H^1}\less (1+ |k|)2^{-|k|\gamma}.
\end{align*}
Therefore
\begin{align*}
\norm{\sum_{|k|<M}f_k}{H^1}&\leq\sum_{|k|<M}\norm{f_k}{H^1}\less\sum_{k\in\Z}(1+ |k|)2^{-|k|\gamma}<\infty.
\end{align*}
Hence $\sum_{|k|<M}f_k$ is a Cauchy sequence in $H^1$, and there exists $\widetilde\phi\in H^1$ such that
\begin{align*}
\widetilde\phi=\sum_{k\in\Z}f_k=\sum_{k\in\Z}M_b\Theta_kM_b\phi.
\end{align*}
But since the reproducing formula holds for $b\phi$ in $L^p$ for some $1<p<\infty$, it follows that $\widetilde\phi=b\phi$ and the reproducing formula holds for $b\phi$ in $H^1$, which completes the proof.
\end{proof}

\begin{corollary}\label{c:accH1}
Let $b\in L^\infty$ be a para-accretive function, $S_k^b$, $D_k^b$, and $\widetilde D_k^b$ be approximation to identity and reproducing formula operator with respect to $b$ as in Theorem \ref{t:Han}.  Then for all $\delta>0$ and $\phi\in C_0^\delta$ such that $b\phi$ has mean zero,
\begin{align*}
\sum_{k\in\Z}M_b\widetilde D_kM_bD_kM_b\phi=\sum_{k\in\Z}M_bD_kM_b\phi=b\phi
\end{align*}
in $H^1$.
\end{corollary}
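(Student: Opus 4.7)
The plan is to deduce both equalities by applying Theorem \ref{t:H1convergence} twice, once with $\Theta_k := D_k^b$ (yielding the outer equality) and once with $\Theta_k := \widetilde D_k^b M_b D_k^b$ (yielding the inner composition equality). In each case the three hypotheses of Theorem \ref{t:H1convergence} must be checked: (i) $\{\theta_k\}\in LPK$, (ii) $\Theta_k b = \Theta_k^* b = 0$, and (iii) $\sum_k M_b\Theta_k M_b f = bf$ already converges in some $L^p$, $1<p<\infty$, for $f\in C_0^\delta$ with $bf$ of mean zero. Once these are in place, Theorem \ref{t:H1convergence} upgrades the convergence to $H^1$.

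For the choice $\Theta_k := D_k^b = S_{k+1}^b - S_k^b$, the size bound $|d_k^b(x,y)| \lesssim \Phi_k^{N+\gamma}(x-y)$ and the $y$-regularity follow from the $SLPK$ estimates on $s_k^b$ by the triangle inequality, so $\{d_k^b\}\in LPK$. The cancellations $D_k^b b \equiv 0$ and $(D_k^b)^* b \equiv 0$ come directly from the defining normalization $S_k^b b \equiv 1 \equiv (S_k^b)^* b$ of an approximation to the identity with respect to $b$. The $L^p$-reproducing hypothesis is a telescoping identity: partial sums $\sum_{k=-N}^{N-1} M_b D_k^b M_b \phi = M_b S_N^b M_b \phi - M_b S_{-N}^b M_b \phi$ converge in $L^p$ to $b\phi - 0$ by Corollary \ref{c:accapptoid}, since $\phi\in C_0^\delta \subset L^p\cap L^q$ for any $1\leq q\leq p<\infty$.

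For the choice $\Theta_k := \widetilde D_k^b M_b D_k^b$, the composition kernel is
\[
K_k(x,y) = \int_{\R^n} \widetilde d_k^b(x,u)\, b(u)\, d_k^b(u,y)\, du.
\]
Its size satisfies $|K_k(x,y)| \lesssim \int \Phi_k^{N+\gamma}(x-u)\Phi_k^{N+\gamma}(u-y)\,du \lesssim \Phi_k^{N+\gamma}(x-y)$ by the convolution-type estimate recalled at the start of Section 3, using the size bounds on $\widetilde d_k^b$ inherited from $\widetilde d_k^{b\,*}\in LPK$. The $y$-regularity of $K_k$ is obtained by moving the $y$-difference onto the $d_k^b$ factor and invoking its $LPK$ $y$-regularity. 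Hence $\{K_k\}\in LPK$. The cancellations read $\Theta_k b = \widetilde D_k^b M_b(D_k^b b) = 0$ from the previous paragraph, and $\Theta_k^* b = (D_k^b)^* M_b(\widetilde D_k^{b\,*} b) = 0$ using $\widetilde D_k^{b\,*}b = 0$ from Theorem \ref{t:Han}. The $L^p$-reproducing hypothesis is precisely formula \eqref{hanformula} of Theorem \ref{t:Han}.

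With all three hypotheses of Theorem \ref{t:H1convergence} verified in each case, the $H^1$ convergence of both sums to $b\phi$ follows, completing the proof. The only substantive (though routine) check is that the composition $\widetilde D_k^b M_b D_k^b$ has kernel in $LPK$: this is not automatic from the factors alone, but reduces cleanly to the convolution-type bound on $\Phi_k^{N+\gamma}$ for the size and to the $LPK$ $y$-regularity of $d_k^b$ for the smoothness. No new almost-orthogonality machinery beyond Section 3 is required.
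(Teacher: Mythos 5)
Your proposal is correct and follows essentially the same route as the paper: both equalities are obtained by applying Theorem \ref{t:H1convergence} to $\Theta_k=D_k^b$ and to $\Theta_k=\widetilde D_k^bM_bD_k^b$, with the hypotheses supplied by Theorem \ref{t:Han}. The extra details you supply (the telescoping/Corollary \ref{c:accapptoid} argument for the $L^p$ convergence of $\sum_kM_bD_k^bM_b\phi$ and the verification that the composed kernel lies in $LPK$) are correct and merely make explicit what the paper leaves implicit.
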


\begin{proof}
By Theorem \ref{t:Han}, it follows that the kernels of $\widetilde D_kM_bD_k$ and $D_k$ are Littlewood-Paley square function kernels of type $LPK$, that 
\begin{align*}
\widetilde D_kM_bD_k(b)=(\widetilde D_kM_bD_k)^*(b)=D_k(b)=D_k^*(b)=0,
\end{align*}
and finally that 
\begin{align*}
\sum_{k\in\Z}M_b\widetilde D_kM_bD_kM_b\phi=\sum_{k\in\Z}M_bD_kM_bf=b\phi
\end{align*}
in $L^p$ for all $1<p<\infty$ when $\phi\in C_0^\delta$ when $b\phi$ has mean zero.  Therefore it follows from Theorem \ref{t:H1convergence} that the formula holds in $H^1$ as well.
\end{proof}

\section{A Square Function-Like Estimate}

In this section, we work with Littlewood-Paley type square function kernel adapted to para-accretive functions, but we do not actually prove any square function bounds.  Instead we prove an estimate for a sort of ``dual pairing'' that will be useful to approximate Lebesgue space norms for the singular integral operators in the next section.

\begin{theorem}\label{t:mdualbound}
If $\{\theta_k\}\in SLPK$ and there exist para-accretive functions $b_0,b_1,b_2$ such that
\begin{align*}
\int_{\R^n}\theta_k(x,y_1,y_2)b_0(x)dx=\int_{\R^{2n}}\theta_k(x,y_1,y_2)b_1(y_1)b_2(y_2)dy_1\,dy_2=0
\end{align*}
for all $x,y_1,y_2\in\R^n$ and $k\in\Z$, then for all $1<p,p_1,p_2<\infty$ satisfying \eqref{Holder}, $f_i\in L^{p_i}$ for $i=0,1,2$ where $p_0=p'$
\begin{align*}
\sum_{k\in\Z}\left|\int_{\R^n}\Theta_k(f_1,f_2)(x)f_0(x)dx\right|\less\prod_{i=0}^2||f_i||_{L^{p_i}}
\end{align*}
\end{theorem}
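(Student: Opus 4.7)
My strategy combines a sign-duality reduction, the Calder\'on reproducing formula of Theorem~\ref{t:Han}, a bilinear almost orthogonality bound, and Fefferman--Stein / Littlewood--Paley square function estimates. For a finite truncation $|k|\leq N$, choose unimodular scalars $\epsilon_k$ so that $|\<\Theta_k(f_1,f_2),f_0\>|=\epsilon_k\<\Theta_k(f_1,f_2),f_0\>$; then H\"older's inequality gives
$$\sum_{|k|\leq N}\left|\int_{\R^n}\Theta_k(f_1,f_2)(x)f_0(x)\,dx\right|\leq \norm{\sum_{|k|\leq N}\epsilon_k\,\Theta_k(f_1,f_2)}{L^p}\norm{f_0}{L^{p'}},$$
so it suffices to bound the $L^p$ norm above uniformly in $\epsilon$ and $N$ and let $N\to\infty$. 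Using the reproducing formula of Theorem~\ref{t:Han} applied to $b_1$ and $b_2$ (extended to $L^{p_i}$ by a standard density argument), insert $f_i = \sum_{j_i}\widetilde D_{j_i}^{b_i}M_{b_i}D_{j_i}^{b_i}f_i$ into each slot, producing the triple sum
$$\sum_{|k|\leq N}\sum_{j_1,j_2}\epsilon_k\,\Theta_k\bigl(\widetilde D_{j_1}^{b_1}M_{b_1}g_1^{(j_1)},\widetilde D_{j_2}^{b_2}M_{b_2}g_2^{(j_2)}\bigr),\qquad g_i^{(j_i)}:=D_{j_i}^{b_i}f_i.$$

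The crux is a bilinear almost orthogonality inequality
$$\bigl|\Theta_k\bigl(\widetilde D_{j_1}^{b_1}M_{b_1}g_1,\widetilde D_{j_2}^{b_2}M_{b_2}g_2\bigr)(x)\bigr|\less 2^{-\gamma|k-j_1|}\,2^{-\gamma|k-j_2|}\,\mathcal Mg_1(x)\,\mathcal Mg_2(x),$$
which extends Proposition~\ref{p:operatorAO} to two independent indices. The argument should split into cases according to the ordering of $k,j_1,j_2$: when $k$ is the smallest index I invoke the $SBLPK$ regularity of $\theta_k$ in $(y_1,y_2)$ together with the cancellations $\widetilde D_{j_i}^{b_i,\,*}(b_i)=0$ coming from Theorem~\ref{t:Han}; when $k$ exceeds some $j_i$ I use the $LPK$ smoothness of $\widetilde d_{j_i}^{b_i,\,*}$ together with the cancellations of $\Theta_k$ against $b_0$ in $x$ or against $b_1\otimes b_2$ in $(y_1,y_2)$. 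Concretely, the composite kernel $\iint\theta_k(x,y_1,y_2)\widetilde d_{j_1}^{b_1}(y_1,u_1)\widetilde d_{j_2}^{b_2}(y_2,u_2)\,dy_1dy_2$, multiplied by $b_1(u_1)b_2(u_2)$, inherits enough cancellation in each of $x$, $u_1$, and $u_2$ (via $\widetilde D_{j_i}^{b_i}(b_i)=0$ and the hypotheses on $\theta_k$) for these pointwise estimates to close in the style of the proof of Proposition~\ref{p:operatorAO}.

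Substituting the pointwise bound and summing in $k$, the elementary inequality $\sum_k 2^{-\gamma(|k-j_1|+|k-j_2|)}\less 2^{-\gamma|j_1-j_2|/2}$ followed by a Schur test in $(j_1,j_2)$ dominates the full expression pointwise by
$$\Bigl(\sum_{j_1}|\mathcal Mg_1^{(j_1)}|^2\Bigr)^{1/2}\Bigl(\sum_{j_2}|\mathcal Mg_2^{(j_2)}|^2\Bigr)^{1/2}.$$
H\"older's inequality in $L^p$ factors this into a product of $L^{p_i}$ norms; the Fefferman--Stein vector-valued maximal inequality strips the outer $\mathcal M$'s in each $L^{p_i}$; and the para-accretive Littlewood--Paley $g$-function characterization $\norm{(\sum_j|D_j^{b_i}f_i|^2)^{1/2}}{L^{p_i}}\sim\norm{f_i}{L^{p_i}}$ from \cite{DJS, Ha} delivers the bound $\prod_i\norm{f_i}{L^{p_i}}$ after letting $N\to\infty$. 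The principal obstacle is establishing the bilinear almost orthogonality for independent indices $(j_1,j_2)$, since Proposition~\ref{p:operatorAO} as stated covers only the diagonal case; a secondary point is the density reduction justifying the reproducing formula on $f_i\in L^{p_i}$ that need not satisfy any a priori mean-zero hypothesis.
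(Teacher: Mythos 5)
Your overall scaffold (pull $f_0$ out by duality, insert a para-accretive Calder\'on reproducing formula, reduce to an almost orthogonality bound, then Schur, Fefferman--Stein, and the $g$-function estimate) is the right framework, but the central bilinear almost orthogonality estimate with \emph{independent} indices $(j_1,j_2)$ and \emph{product} decay $2^{-\gamma|k-j_1|}2^{-\gamma|k-j_2|}$ does not follow from the hypotheses, and in fact no usable decay seems available from your composition. Two concrete obstructions:

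\emph{Composition structure.} Writing $f_i=\sum_{j}\widetilde D_j^{b_i}M_{b_i}D_j^{b_i}f_i$, the operator you must control is $\Theta_k(\widetilde D_{j_1}^{b_1}M_{b_1}g_1,\widetilde D_{j_2}^{b_2}M_{b_2}g_2)$, whose inner kernel pairing is
\begin{equation*}
\int_{\R^{2n}}\theta_k(x,y_1,y_2)\,\widetilde d_{j_1}^{b_1}(y_1,u_1)\,\widetilde d_{j_2}^{b_2}(y_2,u_2)\,dy_1\,dy_2,
\end{equation*}
times $b_1(u_1)b_2(u_2)$. To exploit the $y_1$-regularity of $\theta_k$ when $j_1>k$ you would need $\int\widetilde d_{j_1}^{b_1}(y_1,u_1)\,dy_1=0$, but Han's operators only satisfy $\int\widetilde d_{j_1}^{b_1}(y_1,u_1)\,b_1(y_1)\,dy_1=0$ and $\int\widetilde d_{j_1}^{b_1}(y_1,u_1)\,b_1(u_1)\,du_1=0$. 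Inserting the factor $b_1(y_1)/b_1(u_1)$ to activate the $b_1$-weighted cancellation costs you nothing, because $b_1$ has no modulus of continuity. This is exactly why Proposition~\ref{p:operatorAO} always sandwiches $M_b$ \emph{between} the two operators (e.g.\ $\Theta_j M_b\Lambda_k^*$, $\Lambda_k M_b\Theta_j$, $\Theta_j(M_{b_i}\Lambda_k^{i*}\cdot)$), so that the available $b$-weighted cancellation lands on the shared integration variable. Your composition puts $M_{b_1}$ on the wrong side of $\widetilde D_{j_1}^{b_1}$, and none of the forms of Han's formula (nor its adjoint) repairs this.

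\emph{Strength of the claimed decay.} Even setting aside the $M_b$ placement, the class $SBLPK$ provides only first-order H\"older regularity of $\theta_k$ in each of $x,y_1,y_2$ and the \emph{joint} cancellation $\int\int\theta_k(x,y_1,y_2)b_1(y_1)b_2(y_2)\,dy_1dy_2=0$; there is neither a mixed second-order estimate in $(y_1,y_2)$ nor a separate cancellation in $y_1$ or $y_2$ alone. Following the mechanism of Propositions~\ref{p:smoothAO}--\ref{p:operatorAO} with two distinct outer scales $j_1,j_2$ produces a gain of only $\min\bigl(2^{-\gamma|k-j_1|},\,2^{-\gamma|k-j_2|}\bigr)$, which, after summing in $k$, yields no decay in $|j_1-j_2|$ and so does not close the Schur test. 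The product $2^{-\gamma|k-j_1|}2^{-\gamma|k-j_2|}$ --- which you correctly identify as what the rest of your argument requires --- is strictly stronger than what the hypotheses support.

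The paper avoids both problems with a different decomposition: it does not insert reproducing formulas simultaneously in the $f_1$- and $f_2$-slots. Instead it telescopes the approximate identities to obtain bilinear pieces $\Pi_j^1$, $\Pi_j^2$ in which a \emph{single} index $j$ couples both input slots, then inserts one reproducing formula for $f_1$ (index $\ell$) and one for $f_0$ (index $m$). The three operator compositions $D_m^{b_0}M_{b_0}\Theta_k$, $\Theta_k\Pi_j^1$, and $D_j^{b_1}M_{b_1}\widetilde D_\ell^{b_1}$ each have $M_b$ correctly sandwiched, so Proposition~\ref{p:operatorAO} applies and gives decays in $|m-k|$, $|k-j|$, $|j-\ell|$ respectively; a geometric mean then furnishes a summable four-index kernel in the chain $m\to k\to j\to\ell$. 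Your sign-duality reduction is a fine idea and not the source of the difficulty, but to complete the proof along your lines you would need to replace the double insertion $(j_1,j_2)$ with a chain-type decomposition of the paper's form (or otherwise supply a genuinely new bilinear almost orthogonality estimate that does not reduce to the unavailable product decay).
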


\begin{proof}
Since $b_i,b_i^{-1}\in L^\infty$, it is sufficient to prove this estimate for $b_if_i$ in place of $f_i$ for $i=0,1,2$.  Fix $1<p,p_1,p_2<\infty$ satisfying \eqref{Holder}, $f_i\in C_0^\delta$ for $i=0,1,2$ and some $\delta$ where $b_if_i$ has mean zero for $i=0,1,2$.  Define 
\begin{align*}
&\Pi_j^1(f_1,f_2)(x)=M_{b_1}D_k^{b_1}M_{b_1}f_1(x)M_{b_2}S_{k+1}^{b_2}M_{b_2}f_2(x)\\
&\Pi_j^2(f_1,f_2)(x)=M_{b_1}S_k^{b_1}M_{b_1}f_1(x)M_{b_2}D_k^{b_2}M_{b_2}f_2(x),
\end{align*}
where $S_k^{b_i}$ and $D_k^{b_i}$ are defined as in Theorem \ref{t:Han}.  Then it follows that
\begin{align*}
&\Theta_k(b_1f_1,b_2f_2)\\
&\hspace{.5cm}=\lim_{N\rightarrow\infty}\Theta_k(M_{b_1}S_N^{b_1}M_{b_1}f_1,M_{b_2}S_N^{b_2}M_{b_2}f_2)-\Theta_k(M_{b_1}S_{-N}^{b_1}M_{b_1}f_1,M_{b_2}S_{-N}^{b_2}M_{b_2}f_2)\\
&\hspace{.5cm}=\lim_{N\rightarrow\infty}\sum_{j=-N}^{N-1}\Theta_k(M_{b_1}S_{k+1}^{b_1}M_{b_1}f_1,M_{b_2}S_{k+1}^{b_2}M_{b_2}f_2)-\Theta_k(M_{b_1}S_k^{b_1}M_{b_1}f_1,M_{b_2}S_k^{b_2}M_{b_2}f_2)\\
&\hspace{.5cm}=\lim_{N\rightarrow\infty}\sum_{j=-N}^{N-1}\Theta_k\Pi_j^1(f_1,f_2)+\Theta_k\Pi_j^2(f_1,f_2)
\end{align*}
where the convergence holds in $L^p$.  Then we approximate the above dual pairing in the following way
\begin{align*}
\left|\sum_{k\in\Z}\int_{\R^n}\Theta_k(b_1f_1,b_2f_2)(x)b_0(x)f_0(x)dx\right|&\leq\sum_{j,k\in\Z}\left|\int_{\R^n}\Theta_k\Pi_j^1(f_1,f_2)(x)b_0(x)f_0(x)dx\right|\\
&\hspace{.5cm}+\sum_{j,k\in\Z}\left|\int_{\R^n}\Theta_k\Pi_j^2(f_1,f_2)(x)b_0(x)f_0(x)dx\right|.
\end{align*}
These two terms are symmetric, so we only bound the first one.  The bound for the other term follows with a similar argument.  By the convergence in Theorem \ref{t:Han}, we have that
\begin{align*}
&\sum_{j,k\in\Z}\left|\int_{\R^n}\Theta_k\Pi_j^1(f_1,f_2)(x)b_0(x)f_0(x)dx\right|\\
&\hspace{1cm}\leq\sum_{j,k,\ell\in\Z}\left|\int_{\R^n}\Theta_k\Pi_j^1(\widetilde D_\ell^{b_1}M_{b_1}D_\ell^{b_1}M_{b_1}f_1,f_2)(x)b_0(x)f_0(x)dx\right|\\
&\hspace{1cm}\leq\sum_{j,k,\ell,m\in\Z}\left|\int_{\R^n}\widetilde D_m^{b_0}M_{b_0}D_m^{b_0}M_{b_0}\Theta_k\Pi_j^1(\widetilde D_\ell^{b_1}M_{b_1}D_\ell^{b_1}M_{b_1}f_1,f_2)(x)b_0(x)f_0(x)dx\right|\\
&\hspace{1cm}\leq\sum_{j,k,\ell,m\in\Z}\int_{\R^n}|D_m^{b_0}M_{b_0}\Theta_k\Pi_j^1(\widetilde D_\ell^{b_1}M_{b_1}D_\ell^{b_1}M_{b_1}f_1,f_2)(x)M_{b_0}\widetilde D_m^{b_0\,*}M_{b_0}f_0(x)|dx.
\end{align*}
By Proposition \ref{p:operatorAO} we also have the following three estimates
\begin{align*}
|D_m^{b_0}M_{b_0}\Theta_k\Pi_j^i(\widetilde D_\ell^{b_1}M_{b_1}D_\ell^{b_1}M_{b_1}f_1,f_2)(x)|&\less2^{-\gamma|m-k|}\mathcal M\(\Pi_j^1(\widetilde D_\ell^{b_1}M_{b_1}D_\ell^{b_1}M_{b_1}f_1,f_2)\)(x)\\
&\less2^{-\gamma|m-k|}\mathcal M^2\(\mathcal M(D_\ell^{b_1} M_{b_1}f_1)\cdot\mathcal  Mf_2\)(x).\\
|D_m^{b_0}M_{b_0}\Theta_k\Pi_j^1(\widetilde D_\ell^{b_1}M_{b_1}D_\ell^{b_1}M_{b_1}f_1,f_2)(x)|&\less \mathcal M(\Theta_k\Pi_j^1(\widetilde D_\ell^{b_1}M_{b_1}D_\ell^{b_1}M_{b_1}f_1,f_2))(x)\\
&\less 2^{-\gamma|k-j|}\mathcal M^2(\mathcal M(D_\ell^{b_1}M_{b_1}f_1)\cdot\mathcal Mf_2))(x)\\
|D_m^{b_0}M_{b_0}\Theta_k\Pi_j^i(\widetilde D_\ell^{b_1}M_{b_1}D_\ell^{b_1}M_{b_1}f_1,f_2)(x)|&\less \mathcal M^2(\Pi_j^1(\widetilde D_\ell^{b_1}M_{b_1}D_\ell^{b_1}M_{b_1}f_1,f_2))(x)\\
&\less 2^{-\gamma|j-\ell|}\mathcal M^2(\mathcal M(D_\ell^{b_1}M_{b_1}f_1)\cdot\mathcal Mf_2)(x)
\end{align*}
Taking the geometric mean of these three estimates, we have the following pointwise bound
\begin{align*}
&|D_m^{b_0}M_{b_0}\Theta_k\Pi_j^i(M_{b_1}\widetilde D_\ell^{b_1}M_{b_1}D_\ell^{b_1}f_1,f_2)(x)|\\
&\hspace{3cm}\less2^{-\gamma\(\frac{|m-k|}{3}+\frac{|k-j|}{3}+\frac{|j-\ell|}{3}\)}\mathcal M^2\(\mathcal M(D_\ell^{b_1}M_{b_1}f_1)\cdot\mathcal  Mf_2\)(x).
\end{align*}
Therefore
\begin{align*}
&\sum_{j,k,\ell,m\in\Z}\int_{\R^n}|D_m^{b_0}M_{b_0}\Theta_k\Pi_j^1(\widetilde D_\ell^{b_1}M_{b_1}D_\ell^{b_1}M_{b_1}f_1,f_2)(x)\widetilde D_m^{b_0\,*}M_{b_0}f_0(x)|dx\\
&\hspace{.25cm}\less\int_{\R^n}\sum_{j,k,\ell,m\in\Z}2^{-\gamma\(\frac{|m-k|}{3}+\frac{|k-j|}{3}+\frac{|j-\ell|}{3}\)} \mathcal M^2\( \mathcal M (D_\ell^{b_1} M_{b_1}f_1)\cdot \mathcal Mf_2\)(x)|\widetilde D_m^{b_0\,*}M_{b_0}f_0(x)|dx\\
&\hspace{.25cm}\leq\left|\left|\(\sum_{j,k,\ell,m\in\Z}2^{-\gamma\(\frac{|m-k|}{3}+\frac{|k-j|}{3}+\frac{|j-\ell|}{3}\)} \mathcal M^2\( \mathcal M (D_\ell^{b_1} M_{b_1}f_1)\cdot \mathcal Mf_2\)^2\)^\frac{1}{2} \right|\right|_{L^p}\\
&\hspace{4.5cm}\times\norm{\(\sum_{j,k,\ell,m\in\Z}2^{-\gamma\(\frac{|m-k|}{3}+\frac{|k-j|}{3}+\frac{|j-\ell|}{3}\)}|\widetilde D_m^{b_0\,*}M_{b_0}f_0|^2\)^\frac{1}{2}}{L^{p'}}\\
&\hspace{.25cm}\less\left|\left|\(\sum_{\ell\in\Z} \mathcal M^2\(\mathcal  M (D_\ell^{b_1}M_{b_1} f_1)\cdot \mathcal Mf_2\)^2\)^\frac{1}{2} \right|\right|_{L^p}\norm{\(\sum_{m\in\Z}|\widetilde D_m^{b_0\,*}M_{b_0}f_0|^2\)^\frac{1}{2}}{L^{p'}}\\
&\hspace{.25cm}\less\left|\left|\(\sum_{\ell\in\Z}( \mathcal M (D_\ell^{b_1}M_{b_1} f_1) \mathcal Mf_2)^2\)^\frac{1}{2} \right|\right|_{L^p}||f_0||_{L^{p'}}\\
&\hspace{.25cm}\leq\left|\left|\(\sum_{\ell\in\Z}\( \mathcal M (D_\ell^{b_1} M_{b_1}f_1)\)^2\)^\frac{1}{2} \right|\right|_{L^{p_1}}|| \mathcal Mf_2||_{L^{p_2}}||f_0||_{L^{p'}}\less\prod_{i=0}^2||f_i||_{L^{p_i}}.
\end{align*}
In the last three lines, we apply the Fefferman-Stein vector valued maximal inequality \cite{FS1}, H\"older's inequality, and the square function bounds for $D_\ell^{b_1}$ and $\widetilde D_m^{b_0\,*}$ proved by David-Journ\'ed-Semmes in \cite{DJS}.  By symmetry and density, this completes the proof.
\end{proof}

\section{Singular Integral Operators}\label{s:SIO}

In this section, we prove a reduced Tb theorem, construct a para-accretive paraproduct, and prove a full Tb theorem all in the bilinear setting.  First, we prove a few technical lemmas that relate the work in the preceding sections to singular integral operators.

\subsection{Two Technical Lemmas}

\begin{lemma}\label{l:WBP}
Let $b_0,b_1,b_2\in L^\infty$ be para-accretive functions, and assume that $T$ is a bilinear C-Z operator associated to $b_0,b_1,b_2$ such that $M_{b_0}T(M_{b_1}\,\cdot\,,M_{b_2}\,\cdot\,)\in WBP$ for normalized bumps of order $m$.  Then for all normalized bumps $\phi_0,\phi_1,\phi_2$, $R>0$ of order $m$, and $y_0,y_1,y_2\in\R^n$ such that $|y_0-y_i|\leq tR$ 
\begin{align*}
\left|\<T(M_{b_1}\phi_1^{y_1,R},M_{b_2}\phi_2^{y_2,R}),M_{b_0}\phi_0^{y_0,R}\>\right|\less (1+t)^{n+3m}R^n.
\end{align*}
\end{lemma}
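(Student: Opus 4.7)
The plan is to reduce the statement to the weak boundedness property already assumed by rewriting each bump $\phi_i^{y_i,R}$ as a rescaled normalized bump sharing the common center $y_0$ and the common scale $(1+t)R$. Concretely, for $i=0,1,2$, I define
\[
\widetilde\phi_i(w)=\phi_i\!\(\frac{y_0-y_i}{R}+(1+t)w\),
\]
and a direct substitution $u=y_0+(1+t)Rw$ gives the identity $\widetilde\phi_i^{\,y_0,(1+t)R}(u)=\phi_i^{y_i,R}(u)$. Hence the pairing appearing in the lemma equals
\[
\<T(M_{b_1}\widetilde\phi_1^{\,y_0,(1+t)R},M_{b_2}\widetilde\phi_2^{\,y_0,(1+t)R}),M_{b_0}\widetilde\phi_0^{\,y_0,(1+t)R}\>.
\]

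Next I would verify that, after a harmless normalization, each $\widetilde\phi_i$ is itself a normalized bump of order $m$. Since $|y_0-y_i|\leq tR$, the condition $\widetilde\phi_i(w)\neq 0$ forces $(1+t)|w|\leq 1+t$, so $\supp(\widetilde\phi_i)\subset B(0,1)$. The chain rule then yields $\|\partial^\alpha\widetilde\phi_i\|_{L^\infty}\leq(1+t)^{|\alpha|}\|\partial^\alpha\phi_i\|_{L^\infty}\leq(1+t)^m$ for all $|\alpha|\leq m$. Therefore $(1+t)^{-m}\widetilde\phi_i$ is a normalized bump of order $m$ in the sense of the preceding definition.

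Finally, I apply the WBP hypothesis to the three normalized bumps $(1+t)^{-m}\widetilde\phi_i$ centered at the common point $y_0$ with scale $(1+t)R$, and use trilinearity to pull out the three factors $(1+t)^m$:
\[
\left|\<T(M_{b_1}\widetilde\phi_1^{\,y_0,(1+t)R},M_{b_2}\widetilde\phi_2^{\,y_0,(1+t)R}),M_{b_0}\widetilde\phi_0^{\,y_0,(1+t)R}\>\right|\less (1+t)^{3m}\((1+t)R\)^n=(1+t)^{n+3m}R^n,
\]
which is the claimed estimate. There is no substantive obstacle here: the argument is an affine rescaling together with bookkeeping of the normalized-bump conditions under that rescaling. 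The only delicate point is the support check, which uses exactly the hypothesis $|y_0-y_i|\leq tR$ to keep $\widetilde\phi_i$ inside $B(0,1)$; weakening this hypothesis would either force a larger dilation or require one to split the bumps, and would change the exponent in the final bound.
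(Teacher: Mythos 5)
Your proof is correct and follows essentially the same affine-rescaling approach as the paper: recenter all three bumps at $y_0$ with a common enlarged scale, check the support and derivative bounds under this dilation, then apply the WBP hypothesis and pull out the three normalization constants by trilinearity. The only cosmetic difference is that the paper uses the dilation factor $D=1+2t$ while you use the tighter $1+t$; both yield the same $(1+t)^{n+3m}R^n$ bound up to constants.
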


\begin{proof}
Let $y_0,y_1,y_2\in\R^n$, $R>0$, and define $D=1+2t$.  Then it follows that
\begin{align*}
\left|\<T(M_{b_1}\phi_1^{y_1,R},M_{b_2}\phi_2^{y_2,R}),M_{b_0}\phi_0^{y_0,R}\>\right|&=\left|\<T(M_{b_1}\widetilde\phi_1^{y_0,DR},M_{b_2}\widetilde\phi_2^{y_0,DR}),M_{b_0}\widetilde\phi_0^{y_0,DR}\>\right|.
\end{align*}
where $\widetilde\phi_0(u)=\phi_0(Du)$ and $\widetilde\phi_i(u)=\phi_i\(Du+\frac{y_0-y_1}{R}\)$ for $i=1,2$.  If $|u|>1$, then clearly $D|u|>1$, and
\begin{align*}
\left|Du+\frac{y_0-y_1}{R}\right|\geq D|u|-\frac{|y_0-y_1|}{R}\geq(1+2t) |u|-t\geq1.
\end{align*}
So we have that $\supp(\widetilde\phi_i)\subset B(0,1)$.  It follows that $D^{-m}\widetilde\phi_i\in C_0^\infty$ are normalized bumps of order $m$, and it follows that
\begin{align*}
\left|\<T(M_{b_1}\widetilde\phi_1^{y_0,DR},M_{b_2}\widetilde\phi_2^{y_0,DR}),M_{b_0}\widetilde\phi_0^{y_0,DR}\>\right|&\less D^{3m}(DR)^n\less (1+t)^{n+3m}R^n.
\end{align*}
This completes the proof.
\end{proof}

\begin{lemma}\label{l:acckernelconditions}
Let $b_0,b_1,b_2\in L^\infty$ be para-accretive functions.  Suppose $T$ is an bilinear C-Z operator associated to $b_0,b_1,b_2$ with standard kernel $K$, and that $M_{b_0}T(M_{b_1}\,\cdot\,,M_{b_2}\,\cdot\,)\in WBP$.  Also let $S_k^{b_i}$ be approximations to the identity with respect to $b_i$ and $D_k^{b_0}=S_{k+1}^{b_0}-S_k^{b_0}$ with compactly supported kernels $s_k^{b_i}$ and $d_k^{b_i}$ for $k\in\Z$.  Then
\begin{align*}
\theta_k(x,y_1,y_2)=\<T\(b_1s_k^{b_1}(\cdot,y_1),b_2s_k^{b_2}(\cdot,y_2)\),b_0d_k^{b_0}(x,\cdot)\>
\end{align*}
is a collection of Littlewood-Paley square function kernels of type $SBLPK$.  Furthermore $\theta_k$ satisfies
\begin{align*}
&\int_{\R^n}\theta_k(x,y_1,y_2)b_0(x)dx=0
\end{align*}
for all $y_1,y_2\in\R^n$.
\end{lemma}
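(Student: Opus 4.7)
The plan is to verify the $SBLPK$ estimates and the $x$-cancellation separately, combining Lemma \ref{l:WBP} for bumps at comparable scale and position with the Calder\'on-Zygmund kernel representation of $T$ for bumps with pairwise disjoint supports.

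For the pointwise kernel bounds, by Remark \ref{r:equivkernelcond} it suffices to prove the simpler estimates \eqref{altkerii1}--\eqref{altkerii4}, and by homogeneity we may reduce to $k=0$. The three bumps $b_1s_0^{b_1}(\cdot,y_1)$, $b_2s_0^{b_2}(\cdot,y_2)$, and $b_0d_0^{b_0}(x,\cdot)$ are compactly supported in unit balls about $y_1,y_2,x$. When all three centers are within bounded distance, Lemma \ref{l:WBP} directly gives $|\theta_0|\less 1$, matching $\Phi_0^N(x-y_1)\Phi_0^N(x-y_2)$ in this regime. When the three supports are pairwise disjoint, the bilinear kernel representation applies, and I use the cancellation $\int d_0^{b_0}(x,z)b_0(z)\,dz=0$ to subtract $K(x,u_1,u_2)$ from $K(z,u_1,u_2)$ inside the integrand; the $x$-regularity of $K$ then produces decay of order $(|x-y_1|+|x-y_2|)^{-2n-\gamma}$, which matches $\Phi_0^N\Phi_0^N$ for $N=n+\gamma$. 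The mixed case, where $\supp d_0^{b_0}(x,\cdot)$ overlaps one or both input bumps while still being separated from at least one of them, is reduced to the pairwise-disjoint setting by cutting the overlapping input bump into a piece disjoint from the test function and a small remainder, applying the kernel representation on the disjoint piece and continuity of $T$ to handle the remainder in the limit.

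The H\"older-type bounds \eqref{altkerii2}--\eqref{altkerii4} reduce to differencing the appropriate bump inside the pairing; for instance,
\begin{align*}
\theta_k(x,y_1,y_2)-\theta_k(x,y_1',y_2)=\<T(b_1[s_k^{b_1}(\cdot,y_1)-s_k^{b_1}(\cdot,y_1')],b_2s_k^{b_2}(\cdot,y_2)),b_0d_k^{b_0}(x,\cdot)\>.
\end{align*}
For $|y_1-y_1'|\leq 2^{-k}$, the $y$-smoothness of $s_k^{b_1}$ makes the bracketed function a bump of amplitude $(2^k|y_1-y_1'|)^\gamma$ on a ball of radius $2^{-k}$ near $y_1$, and Lemma \ref{l:WBP} yields the bound $(2^k|y_1-y_1'|)^\gamma 2^{2nk}$ required in \eqref{altkerii2}; for $|y_1-y_1'|>2^{-k}$, the size bound applied to each term together with $(2^k|y_1-y_1'|)^\gamma>1$ handles the triangle inequality. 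The arguments for \eqref{altkerii3} and \eqref{altkerii4} are analogous, using the smoothness of $s_k^{b_2}$ and $d_k^{b_0}$ respectively.

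For the cancellation, absolute convergence of $\int\theta_k(x,y_1,y_2)b_0(x)\,dx$ follows from the size bound. I truncate by a cutoff $\eta_R$ supported in $B(0,R)$ and use a Riemann-sum approximation together with continuity of $T$ to obtain
\begin{align*}
\int\eta_R(x)b_0(x)\theta_k(x,y_1,y_2)\,dx=\<T(b_1s_k^{b_1}(\cdot,y_1),b_2s_k^{b_2}(\cdot,y_2)),b_0G_R\>,
\end{align*}
where $G_R(z)=\int\eta_R(x)b_0(x)d_k^{b_0}(x,z)\,dx$ vanishes for $|z|<R-2^{-k}$ thanks to $\int d_k^{b_0}(x,z)b_0(x)\,dx=0$ and the compact support of $d_k^{b_0}$. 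Thus $b_0G_R$ is supported in a thin annulus of width $\sim 2^{-k}$ disjoint from $\supp s_k^{b_i}(\cdot,y_i)$ for large $R$, the kernel representation applies, and $|K|\less R^{-2n}$ combined with the annulus volume $\sim R^{n-1}2^{-k}$ gives $|\<T(\cdot,\cdot),b_0G_R\>|\less 2^{-k}R^{-(n+1)}\to 0$. The main obstacles I anticipate are the mixed-support case of the size bound and the rigorous justification of the Fubini interchange in the cancellation argument; both are resolved uniformly by the cutoff-and-continuity approach outlined above.
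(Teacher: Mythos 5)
Your proposal follows essentially the same route as the paper: the same two-case split for the size bound (WBP via Lemma \ref{l:WBP} when $|x-y_1|+|x-y_2|\lesssim 2^{-k}$, kernel representation plus the cancellation of $d_k^{b_0}$ against $b_0$ and the $x$-regularity of $K$ when separated), regularity by perturbing one bump at a time and applying the WBP, and the identical annulus argument with the $2^{-k}R^{-(n+1)}$ decay for the cancellation in $x$. Two small points. First, your exponent bookkeeping in the separated case is off: the decay $(|x-y_1|+|x-y_2|)^{-2n-\gamma}$ is \emph{not} dominated by $\Phi_0^{n+\gamma}(x-y_1)\Phi_0^{n+\gamma}(x-y_2)$ (take $|x-y_1|=|x-y_2|=d$ large, where the product decays like $d^{-2n-2\gamma}$); you must split the total exponent as the paper does, e.g.\ $\Phi_k^{n+\gamma/2}(x-y_1)\Phi_k^{n+\gamma/2}(x-y_2)$, and then renormalize the parameters via Remark \ref{r:equivkernelcond}. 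Second, to feed $s_k^{b_1}(\cdot,y_1)-s_k^{b_1}(\cdot,y_1')$ into Lemma \ref{l:WBP} as a bump of amplitude $(2^k|y_1-y_1'|)^\gamma$ you need that gain on all derivatives up to the WBP order $m$, not just on the sup norm that the $SLPK$ conditions provide; this holds for the specific kernels $S_k^b=P_kM_{(P_kb)^{-1}}P_k$ because they are jointly $C_0^\infty$, which is exactly the fact the paper invokes when it instead differentiates the bump parameter and converts the resulting Lipschitz bound to a H\"older one. Finally, the ``mixed case'' you set up is unnecessary: the kernel representation only requires the triple intersection of the supports to be empty (the kernel is singular only on the full diagonal), which is automatic once $|x-y_1|+|x-y_2|>2^{3-k}$.
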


\begin{proof}
Fix $x,y_1,y_2\in\R^n$ and $k\in\Z$.  We split estimate \eqref{BLPker1} into two cases:  $|x-y_1|+|x-y_2|\leq 2^{3-k}$ and $|x-y_1|+|x-y_2|>2^{3-k}$.  Note that
\begin{align*}
\phi_1(u)=s_k^{b_1}(u+2^ky_1,2^ky_1)
\end{align*}
is a normalized bump up to a constant multiple and $s_k^{b_1}(u,y_1)=2^{-kn}\phi_1^{y_1,2^{-k}}(u)$.  Likewise $s_k^{b_2}(u,y_2)=2^{-kn}\phi_2^{y_2,2^{-k}}(u)$ and $d_k^{b_0}(x,u)=2^{-kn}\phi_0^{x,2^{-k}}(u)$ where $\phi_0$ and $\phi_2$ are normalize bumps up to a constant multiple.  Then
\begin{align*}
|\theta_k(x,y_1,y_2)|&=\left|\<T\(b_1s_k^{b_1}(\cdot,y_1),b_2s_k^{b_2}(\cdot,y_2)\),b_0d_k^{b_0}(x,\cdot)\>\right|\\
&=2^{3kn}\left|\<T\(b_1\phi_1^{y_1,2^{-k}},b_2\phi_2^{y_2,2^{-k}}\),b_0\phi_0^{x,2^{-k}}\>\right|\less 2^{2kn}
\end{align*}
Now if we assume that $|x-y_1|+|x-y_2|>2^{3-k}$, then it follows that $|x-y_{i_0}|>2^{2-k}$ for at least one $i_0\in\{1,2\}$ and hence
\begin{align*}
\supp(d_k^{b_0}(x,\cdot))\cap\supp(s_k^{b_i}(\cdot,y_1))\cap\supp(s_k^{b_i}(\cdot,y_2))\subset B(x,2^{-k})\cap B(y_{i_0},2^{-k})=\emptyset.
\end{align*}
Therefore, we can estimate $\theta_k$ the kernel representation of $T$ in the following way
\begin{align*}
|\theta_k&(x,y_1,y_2)|\\
&=\left|\int_{\R^{3n}}(K(u_0,u_1,u_2)-K(x,u_1,u_2))b(u_0)d_k^{b_0}(x,u_0)\prod_{i=1}^2b_i(u_i)s_k^{b_i}(u_i,y_i)du_0\,du_1\,du_2\right|\\
&\less\int_{|x-u_0|<2^{-k}}\int_{|y_1-u_1|<2^{-k}}\int_{|y_2-u_2|<2^{-k}}\frac{|u_0-x|^\gamma \;2^{3nk}du_0\,du_1\,du_2}{(|x-u_1|+|x-u_2|)^{2n+\gamma}}\\
&\less\int_{|x-u_0|<2^{-k}}\int_{|y_1-u_1|<2^{-k}}\int_{|y_2-u_2|<2^{-k}}\frac{2^{-\gamma k}\;2^{3nk}du_0\,du_1\,du_2}{(2^{-k}+|x-y_1|+|x-y_2|)^{2n+\gamma}}\\
&\less\frac{2^{-\gamma k}}{(2^{-k}+|x-y_1|+|x-y_2|)^{2n+\gamma}}\\
&\less\Phi_k^{n+\gamma/2}(x-y_1)\Phi_k^{n+\gamma/2}(x-y_2).
\end{align*}
For \eqref{BLPker2}, note that by the continuity from $b_1C_0^\delta\times b_2C_0^\delta$ into $(b_0C_0^\delta)'$ and that $S_k^b=P_kM_{(P_kb)^{-1}}P_k$ has a $C_0^\infty$ kernel, we have for $\alpha\in\N_0^n$ with $|\alpha|=1$
\begin{align*}
|\partial_x^\alpha\theta_k(x,y,z)|&=\left|\<T\(b_1s_k^{b_1}(\cdot,y_1),b_2s_k^{b_2}(\cdot,y_2)\),b_0\partial_x^\alpha(d_k(x,\cdot))\>\right|\less2^k2^{2kn}.
\end{align*}
Estimate \eqref{BLPker2} easily follows in light of Remark \ref{r:equivkernelcond}.  By symmetry, it follows that $\{\theta_k\}$ is a collection of smooth bilinear Littlewood-Paley square function kernels.  Now we verify that $\theta_k$ has integral $0$ in the $x$ spot:  By the continuity of $T$ from $b_1C_0^\delta\times b_2C_0^\delta$ into $(b_0C_0^\delta)'$
\begin{align*}
\int_{\R^n}\theta_k(x,y_1,y_2)b_0(x)dx&=\lim_{R\rightarrow\infty}\<T(b_1s_k^{b_1}(\cdot,y_1),b_2s_k^{b_2}(\cdot,y_2)),b_0\int_{|x|<R}d_k^{b_0}(x,\cdot)b_0(x)dx\>\\
&=\lim_{R\rightarrow\infty}\<T(b_1s_k^{b_1}(\cdot,y_1),b_2s_k^{b_2}(\cdot,y_2)),\lambda_R\>
\end{align*}
where we take this to be the definition of $\lambda_R$.  Now if we take $R>2\cdot 2^{-k}$, then for $|u|<R-2^{-k}$ it follows that 
\begin{align*}
\supp(d_k^{b_0}(\cdot,u))\subset B(u,2^{-k})\subset B(0,|u|+2^{-k})\subset B(0,R),
\end{align*}
and hence for $|u|<R-2^{-k}$ we have that
\begin{align*}
\lambda_R(u)&=b_0(u)\int_{|x|<R}d_k^{b_0}(x,u)b_0(x)dx=b_0(u)D_k^{b_0\,*}b_0(u)=0.
\end{align*}
Also when $|u|>R+2^{-k}$, it follows that $\supp(d_k^{b_0}(\cdot,u))\cap B(0,R)=\emptyset$, and hence that $\lambda_R(u)=0$.  So we have $\lambda_R(x)=0$ for $|x|<R-2^{-k}$ and for $|x|> R+2^{-k}$.  Finally $||\lambda_R||_{L^\infty}\leq\sup_u||d_k^{b_0}(\cdot,u)||_{L^1}\less1$.  Since $\supp(d_k^{b_0}(x,\cdot))\subset B(0,R+2^{-k})\backslash B(0,R-2^{-k})$, it follows that for $R>4(2^{-k}+|y_1|)$, we may use the integral representation
\begin{align*}
&\hspace{-1cm}\left|\<T(b_1s_k^{b_1}(\cdot,y_1),b_2s_k^{b_2}(\cdot,y_2)),\lambda_R\>\right|\\
&\leq\int_{\R^{3n}}|K(u,v_1,v_2)b_1(v_1)s_k^{b_1}(v_1,y_1)b_2(v_2)s_k^{b_2}(v_2,y_2)\lambda_R(u)|du\,dv_1\,dv_2\\
&\less\int_{|v_2-y_2|<2^{-k}}\int_{|v_1-y_1|<2^{-k}}\int_{\supp(\lambda_R)}\frac{2^{2kn}}{(|u-v_1|+|u-v_2|)^{2n}}du\,dv_1\,dv_2\\
&\leq\int_{|v_2-y_2|<2^{-k}}\int_{|v_1-y_1|<2^{-k}}\int_{\supp(\lambda_R)}\frac{2^{2kn}}{(|u|-|v_1-y_1|-|y_1|)^{2n}}du\,dv_1\,dv_2\\
&\leq\int_{|v_2-y_2|<2^{-k}}\int_{|v_1-y_1|<2^{-k}}\int_{\supp(\lambda_R)}\frac{2^{2kn}}{(R-2^{-k}-|v_1-y_1|-|y_1|)^{2n}}du\,dv_1\,dv_2\\
&\leq\int_{|v_2-y_2|<2^{-k}}\int_{|v_1-y_1|<2^{-k}}\int_{\supp(\lambda_R)}\frac{2^{2kn}}{R^{2n}}du\,dv_1\,dv_2\\
&\less|\supp(\lambda_R)|R^{-2n}\\
&\less 2^{-k}R^{-(n+1)}.
\end{align*}
This tends to zero as $R\rightarrow\infty$.  Hence $\theta_k(x,y_1,y_2)$ has integral zero in the $x$ variable.
\end{proof}

\subsection{Reduced Bilinear T(b) Theorem}

It has become a standard argument in T1 and Tb theorems to first prove a reduced version, see e.g. \cite{DJ}, \cite{DJS}, and \cite{Hart2}.  The general idea of the argument is to first assume a stronger $Tb=0$ cancellation condition, and then prove that an operator satisfying the weaker $Tb\in BMO$ cancellation condition is a perturbation of an operator satisfying the stronger cancellation condition.  More precisely this is done through a paraproduct operator, which we will construct later in this section.  First we state and prove our reduced Tb theorem.

\begin{theorem}\label{t:reducedTb}
Let $T$ be an bilinear C-Z operator associated to para-accretive functions $b_0,b_1,b_2$.  If $M_{b_0}T(M_{b_1}\,\cdot\,,M_{b_2}\,\cdot\,)\in WBP$ and 
\begin{align*}
M_{b_0}T(b_1,b_2)=M_{b_1}T^{*1}(b_0,b_2)=M_{b_2}T^{*2}(b_1,b_0)=0,
\end{align*}
then $T$ can be extended to a bounded linear operator from $L^{p_1}\times L^{p_2}$ into $L^p$ for all $1<p_1,p_2<\infty$ satisfying \eqref{Holder}.
\end{theorem}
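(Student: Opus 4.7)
The plan is to decompose $\langle T(b_1f_1,b_2f_2),b_0f_0\rangle$ into a sum of single-scale pairings using the Calder\'on reproducing formula (Theorem \ref{t:Han}, Corollary \ref{c:accH1}) and then apply the square-function-like estimate of Theorem \ref{t:mdualbound}. The hypotheses $M_{b_0}T(b_1,b_2)=M_{b_1}T^{*1}(b_0,b_2)=M_{b_2}T^{*2}(b_1,b_0)=0$ will supply precisely the cancellation required at the level of single-scale kernels to bring them into the hypotheses of Theorem \ref{t:mdualbound}.

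First, by the extended definition of $T$ in Definition \ref{d:Tb} and standard density, I would reduce to proving
\[
|\langle T(b_1f_1,b_2f_2),b_0f_0\rangle|\lesssim\prod_{i=0}^{2}\|f_i\|_{L^{p_i}}
\]
for $f_i\in C_0^\delta$ with $b_if_i$ of mean zero and $p_0=p'$. Next, applying Han's reproducing formula to each of $b_0f_0,b_1f_1,b_2f_2$ (in $H^1$ for $b_0f_0$ by Corollary \ref{c:accH1}, and in $L^{p_i}$ for the others by Theorem \ref{t:Han}), the pairing becomes a triple sum over Littlewood-Paley scales $(k_0,k_1,k_2)$. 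On the diagonal $k_0=k_1=k_2=k$, the relevant bilinear kernel
\[
\theta_k(x,y_1,y_2)=\langle T(b_1\widetilde d_k^{b_1}(\cdot,y_1),b_2\widetilde d_k^{b_2}(\cdot,y_2)),b_0\widetilde d_k^{b_0}(\cdot,x)\rangle
\]
belongs to $SBLPK$ by a repetition of the reasoning of Lemma \ref{l:acckernelconditions}. It satisfies $\int\theta_k(x,y_1,y_2)\,b_0(x)\,dx=0$ automatically from $\widetilde D_k^{b_0\,*}(b_0)=0$, while the companion condition $\int\theta_k(x,y_1,y_2)\,b_1(y_1)b_2(y_2)\,dy_1\,dy_2=0$ comes from the hypothesis $M_{b_0}T(b_1,b_2)=0$: the integration against $b_i(y_i)$ is absorbed into the $L^\infty$-input $T(b_1,b_2)$ via Definition \ref{d:Tb}, which is paired against the mean-zero test function $b_0\widetilde d_k^{b_0}(\cdot,x)$.

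For the off-diagonal terms with $k_i\neq k_j$, I would adapt the almost-orthogonality estimates of Proposition \ref{p:operatorAO} to the three-fold composition of $T$ with the Littlewood-Paley smoothings, yielding pointwise decay factors of the form $2^{-\gamma\max(|k_0-k_1|,|k_0-k_2|,|k_1-k_2|)}\mathcal{M}f_1(x)\mathcal{M}f_2(x)$. The transpose hypotheses $M_{b_1}T^{*1}(b_0,b_2)=M_{b_2}T^{*2}(b_1,b_0)=0$ enter here, playing the role of the $\Theta_k(b_1,b_2)=0$ condition in Proposition \ref{p:operatorAO}, estimate \eqref{operatorAO}, so that the resulting bounds are absolutely summable in the two transverse scale indices. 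This collapses the triple sum to a single-scale sum, after which Theorem \ref{t:mdualbound}, whose hypotheses are exactly those verified for $\theta_k$ above, produces
\[
\sum_k|\langle\Theta_k(f_1,f_2),b_0f_0\rangle|\lesssim\prod_{i=0}^{2}\|f_i\|_{L^{p_i}}.
\]

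The hardest step will be the off-diagonal reduction: in the bilinear setting one needs decay in all three pairwise scale differences, and extracting this from Proposition \ref{p:operatorAO}-style arguments requires careful bookkeeping of the composite kernels together with the three transpose cancellation conditions. A secondary subtlety is justifying the interchange of the triple summation with the $T$-pairing; this is where the $H^1$ convergence of Corollary \ref{c:accH1} in the $b_0f_0$ slot is essential, since $T(b_1f_1,b_2f_2)\in(b_0C_0^\delta)'$ must be legitimately paired with the reproducing-formula expansion of $b_0f_0$, and the requisite $H^1$-duality makes the triple sum unconditionally summable before rearrangement.
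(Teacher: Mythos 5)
Your proposal heads in the right direction at a high level (decompose the pairing and invoke Theorem \ref{t:mdualbound}), but the decomposition you use is genuinely different from the paper's, and it introduces two gaps that the paper's route avoids.

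The paper does \emph{not} expand all three slots with Han's reproducing formula. Instead it writes $\<T(b_1f_1,b_2f_2),b_0f_0\>$ as a telescoping limit of $\<T(M_{b_1}S_N^{b_1}M_{b_1}f_1,M_{b_2}S_N^{b_2}M_{b_2}f_2),M_{b_0}S_N^{b_0}M_{b_0}f_0\>$, and uses the Leibniz-type identity $a_{k+1}b_{k+1}c_{k+1}-a_kb_kc_k=(a_{k+1}-a_k)b_{k+1}c_{k+1}+a_k(b_{k+1}-b_k)c_{k+1}+a_kb_k(c_{k+1}-c_k)$ to split the resulting single sum in $k$ into three pieces, one for each slot, each of the form $\Theta_k^i$ built from compactly supported kernels $s_k^{b_j},d_k^{b_j}$. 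This produces a \emph{single}-scale sum; Lemma \ref{l:acckernelconditions} applies directly to each $\Theta_k^i$ to give $SBLPK$ membership and the integral-zero condition in the $x$ slot, the hypotheses $M_{b_0}T(b_1,b_2)=0$ etc.\ give $\Theta_k^i(b_1,b_2)=0$, and then Theorem \ref{t:mdualbound} finishes. Crucially, the multi-index almost-orthogonality lives entirely inside the proof of Theorem \ref{t:mdualbound}, where it is handled for operators with concrete $BLPK$ kernels.

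Your triple-sum approach has to do new work that the paper never needs. First, the diagonal kernel you propose, $\<T(b_1\widetilde d_k^{b_1}(\cdot,y_1),b_2\widetilde d_k^{b_2}(\cdot,y_2)),b_0\widetilde d_k^{b_0}(\cdot,x)\>$, is not covered by "a repetition of Lemma \ref{l:acckernelconditions}": that lemma relies essentially on the kernels $s_k^{b_i},d_k^{b_0}$ having \emph{compactly supported} kernels (so that when $|x-y_1|+|x-y_2|$ is large the supports are disjoint and the kernel representation of $T$ can be used), whereas Theorem \ref{t:Han} only gives $\widetilde d_k^b\in LPK$, with polynomial decay and no compact support. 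Second, and more seriously, the off-diagonal decay you want — estimates for $\<T(M_{b_1}\widetilde D_{k_1}^{b_1}\cdot,M_{b_2}\widetilde D_{k_2}^{b_2}\cdot),M_{b_0}\widetilde D_{k_0}^{b_0}\cdot\>$ with gain $2^{-\gamma\max|k_i-k_j|}$ — does not follow by "adapting" Proposition \ref{p:operatorAO}. That proposition is about compositions of operators with known $BLPK$ kernels; here the middle factor is the abstract singular integral $T$, not an integral against an $LPK$ kernel, so the computations in Proposition \ref{p:operatorAO} simply do not apply. Getting this almost-orthogonality directly for $T$ is essentially the content of the T(b) theorem itself and would require rebuilding the whole argument (WBP reduction, kernel representation off-diagonal, cancellation hypotheses) at the level of the pairing rather than at the level of fixed $SBLPK$ kernels. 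The paper's telescoping sidesteps all of this by never forming the triple sum.
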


Note that in the hypothesis of Theorem \ref{t:reducedTb}, we take $M_{b_0}T(b_1,b_2)=0$ in the sense of Definition \ref{d:Tb}:  For appropriate $\eta_R^1$, $\eta_R^2$ and all $\phi\in C_0^\delta$ such that $b_0\phi$ has mean zero
\begin{align*}
\lim_{R\rightarrow\infty}\<T(\eta_R^1b_1,\eta_R^2b_2),b_0\phi\>=0.
\end{align*}
The meaning of $M_{b_1}T^{*1}(b_0,b_2)=M_{b_2}T^{*2}(b_1,b_0)=0$ are expressed in a similar way.

\begin{proof}
Let $T$ be as in the hypothesis, $1<p,p_1,p_2<\infty$ satisfy \eqref{Holder}, and $f_0,f_1,f_2\in C_0^1$ such that $b_if_i$ have mean zero.  Then by Proposition \ref{p:smoothaccconvergence} and the continuity of $T$ from $b_1C_0^\delta\times b_2C_0^\delta$ into $(b_0C_0^\delta)'$, it follows that
\begin{align*}
|\<T(b_1f_1,b_2f_2),b_0f_0\>|&=\lim_{N\rightarrow\infty}\left|\<T(M_{b_1}S_N^{b_1}M_{b_1}f_1,M_{b_2}S_N^{b_2}M_{b_2}f_2),M_{b_0}S_N^{b_0}M_{b_0}f_0\>\right.\\
&\hspace{1.5cm}-\left.\<T(M_{b_1}S_{-N}^{b_1}M_{b_1}f_1,M_{b_2}S_{-N}^{b_2}M_{b_2}f_2),M_{b_0}S_{-N}^{b_0}M_{b_0}f_0\>\right|\\
&=\lim_{N\rightarrow\infty}\left|\sum_{k=-N}^{N-1}\<T(M_{b_1}S_{k+1}^{b_1}M_{b_1}f_1,M_{b_2}S_{k+1}^{b_2}M_{b_2}f_2),M_{b_0}S_{k+1}^{b_0}M_{b_0}f_0\>\right.\\
&\left.\phantom{\sum_k}\hspace{1.05cm}-\<T(M_{b_1}S_k^{b_1}M_{b_1}f_1,M_{b_2}S_k^{b_2}M_{b_2}f_2),M_{b_0}S_k^{b_0}M_{b_0}f_0\>\right|\\
&\leq\sum_{k\in\Z}\left|\int_{\R^n}\Theta_k^0(b_1f_1,b_2f_2)b_0(x)f_0(x)dx\right|\\
&\hspace{1.5cm}+\left|\int_{\R^n}\Theta_k^1(b_0f_0,b_2f_2)b_1(x)f_1(x)dx\right|\\
&\hspace{1.5cm}+\left|\int_{\R^n}\Theta_k^2(b_1f_1,b_0f_0)b_2(x)f_2(x)dx\right|.
\end{align*}
where
\begin{align*}
\Theta_k^0(f_1,f_2)&=D_k^{b_0}M_{b_0}T(M_{b_1}S_{k+1}^{b_1}f_1,M_{b_2}S_{k+1}^{b_2}f_2),\\
\Theta_k^1(f_1,f_2)&=D_k^{b_1}M_{b_1}T^{*1}(M_{b_0}S_k^{b_0}f_1,M_{b_2}S_k^{b_2}f_2),\\
\Theta_k^2(f_1,f_2)&=D_k^{b_2}M_{b_2}T^{*2}(M_{b_1}S_{k+1}^{b_1}f_1,M_{b_0}S_k^{b_0}f_2).
\end{align*}
We focus on $\Theta_k^0=\Theta_k$ to simplify notation; the other terms are handled in the same way.  Since $M_{b_0}T(M_{b_1}\;\cdot,M_{b_2}\;\cdot)\in WBP$ and $T$ has a standard kernel, it follows from Lemma \ref{l:acckernelconditions} that $\{\theta_k\}\in SBLPK$ and $\theta_k(x,y_1,y_2)b_0(x)$ has mean zero in the $x$ variable for all $y_1,y_2\in\R^n$.  Now we show that $\Theta_k(b_1,b_2)=0$, which follows from the assumption that $M_{b_0}T(b_1,b_2)=0$:
\begin{align*}
\Theta_k(b_1,b_2)(x)&=\int_{\R^{2n}}\<M_{b_0}T\(M_{b_1}s_k^{b_1}(\cdot,y_1)b_1(y_1),M_{b_2}s_k^{b_2}(\cdot,y_2)b_2(y_2)\),d_k^{b_0}(x,\cdot)\>dy\\
&=\lim_{R\rightarrow\infty}\<T\(b_1\eta_R^1,b_2\eta_R^2\),b_0d_k^{b_0}(x,\cdot)\>=0,
\end{align*}
where
\begin{align*}
\eta_R^i(u)=\int_{|y|<R}s_k^{b_i}(u,y)b_i(y)dy.
\end{align*}
We've used that $M_{b_0}T(b_1,b_2)=0$, and that $\eta_R^i\in C^\infty$, $\eta_R^i\equiv1$ on $B(0,R)$, and $\supp(\eta_R^i)\subset B(0,2R)$ for $R$ sufficiently large.  Then by Theorem \ref{t:mdualbound}, it follows that 
\begin{align*}
\sum_{k\in\Z}\left|\<\Theta_k^0(M_{b_1}f_1,M_{b_2}f_2),M_{b_0}f_0\>\right|\less||f_0||_{L^{p'}}||f_1||_{L^{p_1}}||f_2||_{L^{p_2}}.
\end{align*}
A similar argument holds for $\Theta_k^i$ with $i=1,2$ again taking advantage of the facts $\frac{1}{p'}+\frac{1}{p_2}=\frac{1}{p_1'}$ and $\frac{1}{p_1}+\frac{1}{p'}=\frac{1}{p_2'}$.  Therefore $T$ can be extended to a bounded operator from $L^{p_1}\times L^{p_2}$ into $L^p$ for all $1<p,p_1,p_2<\infty$ satisfying \eqref{Holder}.
\end{proof}

\subsection{A Para-Product Construction}

In the original proof of the T1 theorem, David-Journ\'e \cite{DJ} used the Bony paraproduct \cite{B} to pass from their reduced T1 theorem to the full T1 theorem.  Following the same idea, David-Jounrn\'e-Semmes \cite{DJS} proved the Tb theorem by constructing a para-accretive version of the Bony paraproduct.  In \cite{Hart2}, we constructed a bilinear Bony-type paraproduct, which allowed us to transition from a reduce bilinear T1 theorem to a full T1 theorem.  Here we construct a bilinear paraproduct in a para-accretive function setting.  First we prove a quick lemma, which essentially appears in a work by Benyi-Maldonado-Nahmod-Torres \cite{BMNT} and is a bilinear version of an observation made by Coifman-Meyer \cite{CM1}.

\begin{lemma}\label{l:CZkernel}
Suppose $\{\theta_k\}\in SBLPK$ with decay parameter $N>2n$, and define $K:\R^{3n}\backslash\{(x,x,x):x\in\R^n\}\rightarrow\C$ 
\begin{align*}
K(x,y_1,y_2)=\sum_{k\in\Z}\theta_k(x,y_1,y_2).
\end{align*}
Then $K$ is a bilinear standard Calder\'on-Zygmund kernel.
\end{lemma}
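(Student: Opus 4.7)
The plan is to verify each of the four defining estimates of a standard bilinear Calder\'on-Zygmund kernel directly, by splitting the sum $\sum_k\theta_k$ according to the natural scales. Throughout, let $r=|x-y_1|+|x-y_2|$.

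For the size bound, the $SBLPK$ hypothesis gives $|\theta_k(x,y_1,y_2)|\leq A\Phi_k^{N+\gamma}(x-y_1)\Phi_k^{N+\gamma}(x-y_2)$, so $|\theta_k|\lesssim 2^{2kn}$ when $2^k r\leq 1$, and $|\theta_k|\lesssim 2^{2kn}(2^k r)^{-(N+\gamma)}$ when $2^k r>1$ (the latter using that one of $|x-y_i|$ must be at least $r/2$). Splitting the series at $2^k\sim 1/r$, both resulting geometric sums yield $\lesssim r^{-2n}$ thanks to the hypothesis $N>2n$, which is precisely the exponent needed to balance the $2^{2kn}$ growth in the low-frequency piece and to make the high-frequency tail converge with the correct exponent. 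This simultaneously establishes absolute convergence of $K$ off the diagonal.

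For regularity in $y_1$, with $h=|y_1-y_1'|<\max(|x-y_1|,|x-y_2|)/2$, two estimates on $|\theta_k(x,y_1,y_2)-\theta_k(x,y_1',y_2)|$ are available: the regularity bound \eqref{BLPker2}, which carries a factor $(2^k h)^\gamma$, and the triangle-inequality bound from the size estimate. I would use the regularity bound when $2^k h\leq 1$ and the size bound when $2^k h>1$, splitting each regime further at $2^k\sim 1/r$. All four resulting geometric series converge by $N+\gamma>2n$, and the constraint $h<r/2$ ensures that the endpoint contributions combine into $\lesssim h^\gamma r^{-(2n+\gamma)}$; indeed the crucial inequality $(h/r)^{N-2n}\leq 1$ is what makes the $2^k h>1$ regime produce the target power of $r$. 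The regularity in $y_2$ is identical by symmetry, and the regularity in $x$ is handled the same way using the additional $SBLPK$ estimate \eqref{BLPker4}.

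No cancellation or orthogonality is invoked; the content is entirely that the off-diagonal decay rate $N+\gamma>2n$ suffices to reconstruct a standard Calder\'on-Zygmund kernel from a Littlewood-Paley-type decomposition, the bilinear analogue of the classical linear construction. The only genuine work is therefore bookkeeping: organizing the four-way case split ($2^k h$ versus $1$ against $2^k r$ versus $1$) and verifying in each piece that the geometric series sums to the right power of $r$. I expect this case analysis to be the main obstacle only in the sense of being tedious rather than subtle.
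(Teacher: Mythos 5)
Your proposal is correct and follows essentially the same route as the paper: split the dyadic sum at $2^k\sim 1/r$ and sum geometric series, using $N>2n$ for the size bound and $N+\gamma>2n$ for the regularity bounds, with no cancellation invoked. The only difference is that the paper dispenses with your extra case split at $2^k|y_1-y_1'|\sim 1$, because the $SBLPK$ regularity estimates already carry the full off-diagonal decay $\Phi_k^{N+\gamma}$ in both variables and can therefore be summed over all $k$ directly; your four-way split is harmless but unnecessary.
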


\begin{proof}
To prove the size estimate, we take $d=|x-y_1|+|x-y_2|\neq0$ and compute
\begin{align*}
|K(x,y_1,y_2)|&\less\sum_{k\in\Z}\frac{2^{2kn}}{(1+2^k|x-y_1|)^{N+\gamma}(1+2^k|x-y_2|)^{N+\gamma}}\\
&\less\sum_{2^k\leq d^{-1}}2^{2kn}+\sum_{2^k>d^{-1}}\frac{2^{2kn}}{(2^kd)^{N+\gamma}}\less d^{-2n}.
\end{align*}
For the regularity in $x$, we take $x,x',y_1,y_2\in\R^n$ with $|x-x'|<\max(|x-y_1|,|x-y_2|)/2$ and define $d'=|x'-y_1|+|x'-y_2|$.  Then
\begin{align*}
|K(x,y_1,y_2)-K(x',y_1,y_2)|&\less\sum_{k\in\Z}\frac{(2^k|x-x'|)^{\gamma}2^{2kn}}{(1+2^k|x-y_1|)^{N+\gamma}(1+2^k|x-y_2|)^{N+\gamma}}\\
&\hspace{1cm}+\sum_{k\in\Z}\frac{(2^k|x-x'|)^{\gamma}2^{2kn}}{(1+2^k|x'-y_1|)^{N+\gamma}(1+2^k|x'-y_2|)^{N+\gamma}}\\
&=I+II.
\end{align*}
We first bound $I$ by $|x-x'|^\gamma$ times
\begin{align*}
\sum_{2^k\leq d^{-1}}2^{k(2n+\gamma)}+\sum_{2^k>d^{-1}}\frac{2^{k(2n+\gamma)}}{(2^kd)^{N+\gamma}}\less d^{-(2n+\gamma)}+d^{-(N+\gamma)}\sum_{2^k>d^{-1}}2^{k(2n-N)}\less d^{-(2n+\gamma)}.
\end{align*}
By symmetry, it follows that $II\less|x-x'|^\gamma d'^{-(2n+\gamma)}$, but since $|x-x'|<\max(|x-y_1|,|x-y_2|)/2$, without loss of generality say $|x-y_1|\geq|x-y_2|$ it follows that
\begin{align*}
II\less\frac{|x-x'|^\gamma}{(|x'-y_1|+|x'-y_2|)^{2n+\gamma}}\leq\frac{|x-x'|^\gamma}{(|x-y_1|-|x-x'|)^{2n+\gamma}}\less\frac{|x-x'|^\gamma}{|x-y_1|^{2n+\gamma}}\less\frac{|x-x'|^\gamma}{d^{2n+\gamma}}
\end{align*}
With a similar computation for $y_1,y_2$, it follows that $K$ is a standard kernel.
\end{proof}

\begin{theorem}\label{t:paraproduct}
Given para-accretive functions $b_0,b_1,b_2\in L^\infty$ and $\beta\in BMO$, there exists a bilinear Calder\'on-Zygmund operator $L$ that is bounded from $L^{p_1}\times L^{p_2}$ into $L^p$ for all $1<p_1,p_2<\infty$ satisfying \eqref{Holder} with $p=2$ such that $M_{b_0}L(b_1,b_2)=\beta$, $M_{b_1}L^{*1}(b_0,b_2)=M_{b_2}L^{*2}(b_1,b_0)=0$.
\end{theorem}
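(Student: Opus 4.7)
The plan is to realize $L$ as an accretive-function-adapted bilinear Bony paraproduct. Using the reproducing operators $D_k^{b_0}$, $\widetilde D_k^{b_0}$ from Theorem \ref{t:Han} (crucially with $\widetilde D_k^{b_0}(b_0) = \widetilde D_k^{b_0\,*}(b_0) = 0$) and the compactly supported approximations to the identity $S_k^{b_i}$ from Remark \ref{r:accapptoid} (normalized so that $S_k^{b_i}(b_i) = 1$), I would define
\begin{align*}
L(f_1, f_2)(x) := \sum_{k \in \Z} \widetilde D_k^{b_0}\bigl[ M_{b_0} D_k^{b_0}\beta \cdot S_k^{b_1} f_1 \cdot S_k^{b_2} f_2 \bigr](x).
\end{align*}
The design produces all three testing-condition identities essentially for free: plugging $f_i = b_i$ collapses the $S_k^{b_i}$ factors to $1$, and the remaining sum $\sum_k \widetilde D_k^{b_0}[M_{b_0} D_k^{b_0}\beta]$ is precisely the reproducing formula of Corollary \ref{c:accH1} applied to $\beta/b_0$, so equals $\beta$ when paired (in $H^1$-$BMO$ duality) with mean-zero $b_0 f_0$; on the transpose side, $\widetilde D_k^{b_0\,*}$ lands on the $b_0$ slot and is killed by $\widetilde D_k^{b_0\,*}(b_0) = 0$.

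The first block of work is to show $L$ is a bilinear C-Z operator. A standard BMO argument, in which one uses $\int (s_{k+1}^{b_0} - s_k^{b_0})(x,y) b_0(y)\, dy = 0$ to subtract off an arbitrary multiple of $b_0$, yields the uniform bound $\|D_k^{b_0}\beta\|_{L^\infty} \lesssim \|\beta\|_{BMO}$. With that in hand, the $k$th kernel
\begin{align*}
\theta_k(x,y_1,y_2) = \int_{\R^n} \widetilde d_k^{b_0}(x,u)\, b_0(u) D_k^{b_0}\beta(u)\, s_k^{b_1}(u,y_1)\, s_k^{b_2}(u,y_2)\, du
\end{align*}
is checked to be of type $SBLPK$ with constants proportional to $\|\beta\|_{BMO}$ and any prescribed decay, by combining Proposition \ref{p:smoothAO} with Remark \ref{r:prodBLPK} (the compact support of $s_k^{b_i}$ localizes the integrand to $|y_1-y_2| \lesssim 2^{-k}$). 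Lemma \ref{l:CZkernel} then certifies $K_L = \sum_k \theta_k$ as a standard bilinear Calder\'on--Zygmund kernel, and the $WBP$ follows from pairing normalized bumps against the $\theta_k$ and summing. The three testing-condition identities are then verified through the truncation procedure of Definition \ref{d:Tb}: for $M_{b_0}L(b_1,b_2) = \beta$ one reduces to the $H^1$-convergent reproducing sum via Corollary \ref{c:accH1}; for the two transpose conditions one uses
\begin{align*}
L^{*1}(g, f_2) = \sum_k S_k^{b_1\,*}\bigl[M_{b_0} D_k^{b_0}\beta \cdot S_k^{b_2} f_2 \cdot \widetilde D_k^{b_0\,*} g\bigr]
\end{align*}
and its $L^{*2}$ analogue, together with $\widetilde D_k^{b_0\,*}(b_0) = 0$.

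The hard part will be the $L^{p_1} \times L^{p_2} \to L^2$ bound for $1/p_1 + 1/p_2 = 1/2$, from which uniform control of finite partial sums also gives convergence of $L(f_1,f_2)$ in $L^2$. By duality I would pair $\langle L(f_1,f_2), g\rangle$ against $g \in L^2$, transpose $\widetilde D_k^{b_0}$ onto $g$, pull out $\sup_k |S_k^{b_2} f_2|$, apply Cauchy--Schwarz in $k$, and then H\"older in $x$ with exponents $p_2, p_1, 2$ to obtain
\begin{align*}
|\langle L(f_1,f_2), g\rangle| &\lesssim \bigl\|\sup_k |S_k^{b_2} f_2|\bigr\|_{L^{p_2}} \cdot \Bigl\|\Bigl(\textstyle\sum_k |D_k^{b_0}\beta|^2 |S_k^{b_1} f_1|^2\Bigr)^{1/2}\Bigr\|_{L^{p_1}} \\
&\hspace{4cm} \times \Bigl\|\Bigl(\textstyle\sum_k |\widetilde D_k^{b_0\,*} g|^2\Bigr)^{1/2}\Bigr\|_{L^2}.
\end{align*}
The first factor is dominated by $\|f_2\|_{L^{p_2}}$ via Hardy--Littlewood, the third by $\|g\|_{L^2}$ via the square function bound for $\widetilde D_k^{b_0\,*}$ from \cite{DJS}. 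The middle factor is where the real work lies: since $\beta \in BMO$, the family $|D_k^{b_0}\beta|^2\, dx \otimes \delta_k$ is a discrete Carleson measure of mass $\lesssim \|\beta\|_{BMO}^2$, and the $L^{p_1}$-Carleson embedding theorem applied to the tent function $(x,k) \mapsto S_k^{b_1} f_1(x)$ (whose non-tangential maximal function is controlled by $\mathcal M f_1$) yields the required bound $\|\beta\|_{BMO}\|f_1\|_{L^{p_1}}$. This Carleson step is the accretive analog of the BMO-via-Carleson characterization in David--Journ\'e--Semmes and is the main technical obstacle; once it is in place, assembling the pieces completes the proof.
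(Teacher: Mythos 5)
Your overall architecture is the right one, and it matches the paper's: an accretive bilinear Bony paraproduct built from the $D_k^{b_0}$, $\widetilde D_k^{b_0}$, $S_k^{b_i}$; kernel estimates via Remark \ref{r:prodBLPK} and Lemma \ref{l:CZkernel}; the $L^2$ bound via a square-function/Carleson argument; and the testing conditions via the $H^1$-convergent reproducing formula plus the transpose annihilation $\widetilde D_k^{b_0\,*}(b_0)=0$. But the specific operator you chose,
\begin{align*}
L(f_1,f_2)=\sum_k\widetilde D_k^{b_0}\bigl[M_{b_0}D_k^{b_0}\beta\cdot S_k^{b_1}f_1\cdot S_k^{b_2}f_2\bigr],
\end{align*}
puts the multipliers $M_{b_0}$ on the wrong sides, and this is not a cosmetic issue. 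The paper uses $D_k^{b_0}M_{b_0}\bigl[\,(\widetilde D_k^{b_0\,*}M_{b_0}\beta)\cdot S_k^{b_1}f_1\cdot S_k^{b_2}f_2\,\bigr]$. The key structural point is that the operators applied to $\beta$ and to the outer slot must \emph{annihilate constants}, not merely $b_0$. One has $\widetilde D_k^{b_0\,*}M_{b_0}(1)=\widetilde D_k^{b_0\,*}(b_0)=0$ and $D_k^{b_0}M_{b_0}(1)=D_k^{b_0}(b_0)=0$, whereas your choice $M_{b_0}D_k^{b_0}$ gives $M_{b_0}D_k^{b_0}(1)=b_0\,D_k^{b_0}(1)$, and $D_k^{b_0}(1)=S_{k+1}^{b_0}(1)-S_k^{b_0}(1)=P_{k+1}((P_{k+1}b_0)^{-1})-P_k((P_kb_0)^{-1})$ is not zero for general para-accretive $b_0$.

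This breaks two steps that you flagged as the "real work." First, the uniform bound $\|D_k^{b_0}\beta\|_{L^\infty}\lesssim\|\beta\|_{BMO}$ does not follow from $D_k^{b_0}(b_0)=0$: subtracting a multiple of $b_0$ from $\beta$ on a ball leaves a remainder whose local $L^2$ oscillation carries an uncontrolled factor $|\bar\beta_Q|$ (BMO averages are unbounded), precisely because $D_k^{b_0}$ does not kill the constant $\bar\beta_Q$. For the same reason, $\sum_k|D_k^{b_0}\beta|^2\delta_{t=2^{-k}}\,dx$ need not be a Carleson measure for $\beta\in BMO$; the argument that turns BMO into a Carleson measure requires the operators to annihilate $1$, which your $M_{b_0}D_k^{b_0}$ does not. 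Second, the reproducing-formula step also drifts: pairing your $L(b_1,b_2)$ against $b_0 f_0$ and transposing yields $\sum_k\int\beta\,D_k^{b_0}M_{b_0}\widetilde D_k^{b_0\,*}M_{b_0}f_0\,dx$, whereas Corollary \ref{c:accH1} (transposed) supplies $\sum_k M_{b_0}D_k^{b_0}M_{b_0}\widetilde D_k^{b_0\,*}M_{b_0}f_0=b_0f_0$; the missing leading $M_{b_0}$ means you get $\langle\beta,f_0\rangle$ rather than the required $\langle\beta,b_0f_0\rangle$. Swapping to the paper's arrangement $D_k^{b_0}M_{b_0}[(\widetilde D_k^{b_0\,*}M_{b_0}\beta)\cdots]$ (or the symmetric $\widetilde D_k^{b_0}M_{b_0}[(D_k^{b_0}M_{b_0}\beta)\cdots]$) fixes all three issues simultaneously: the $\beta$-factor and outer operator annihilate constants, the uniform $L^\infty$ bound and Carleson estimate go through, the transposes vanish on $b_0$, and the $M_{b_0}$'s line up with Corollary \ref{c:accH1}. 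Your Cauchy--Schwarz/Carleson-embedding scheme for the $L^{p_1}\times L^{p_2}\to L^2$ bound is a mild variant of the paper's (the paper keeps $S_k^{b_1}f_1\,S_k^{b_2}f_2$ together inside one square function rather than peeling off $\sup_k|S_k^{b_2}f_2|$) and would work once the $\beta$-factor is corrected.
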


\begin{proof}
Let $b_0,b_1,b_2$ be para-accretive functions, and $S_k^{b_i}$, $D_k^{b_i}$, and $\widetilde D_k^{b_i}$ be the approximation to identity and reproducing formula operators with respect to $b_i$ for $i=0,1,2$ that have compactly supported kernels as defined in Remark \ref{r:accapptoid} and Theorem \ref{t:Han}.  Define
\begin{align*}
&L(f_1,f_2)=\sum_{k\in\Z}L_k(f_1,f_2)=\sum_{k\in\Z}D_k^{b_0}M_{b_0}\((\widetilde D_k^{b_0\,*}M_{b_0}\beta)(S_k^{b_1}f_1)(S_k^{b_2}f_2)\)\\
&\ell(x,y)=\sum_{k\in\Z}\ell_k(x,y)=\sum_{k\in\Z}\int_{\R^n}d_k^{b_0}(x,u)b_0(u)\widetilde D_k^{b_0\,^*}M_{b_0}\beta(u)s_k^{b_1}(u,y_1)s_k^{b_2}(u,y_2)du.
\end{align*}
It follows that $L$ is bounded from $L^{p_1}\times L^{p_2}$ into $L^2$ for all $1<p_1,p_2<\infty$ satisfying $\frac{1}{2}=\frac{1}{p_1}+\frac{1}{p_2}$:
\begin{align*}
\left|\int_{\R^n}L(f_1,f_2)(x)f_0(x)dx\right|&\leq\sum_{k\in\Z}\left|\int_{\R^n}\widetilde D_k^{b_0\,*}M_{b_0}\beta(x)S_k^{b_1}f_1(x)S_k^{b_2}f_2(x)D_k^{b_0}f_0(x)b_0(x)dx\right|\\
&\hspace{-1cm}\less\norm{\(\sum_{k\in\Z}|M_{\widetilde D_k^{b_0\,*}M_{b_0}\beta}S_k^{b_1}f_1S_k^{b_2}f_2|^2\)^\frac{1}{2}}{L^2}\norm{\(\sum_{k\in\Z}|D_k^{b_0}f_0|^2\)^\frac{1}{2}}{L^2}\\
&\hspace{-1cm}\less\(\int_{\R^n}\sum_{k\in\Z}\[\Phi_k^N*|f_1|(x)\Phi_k^N*|f_2|(x)\]^2|\widetilde D_k^{b_0\,*}M_{b_0}\beta(x)|^2dx\)^\frac{1}{2}\norm{f_0}{L^2}\\
&\hspace{-1cm}\less\(\int_{\R^n}\sum_{k\in\Z}\[\Phi_k^N*|f_1|(x)\]^{p_1}|\widetilde D_k^{b_0\,*}M_{b_0}\beta(x)|^2dx\)^\frac{1}{p_1}\\
&\hspace{-1cm}\hspace{1cm}\times\(\int_{\R^n}\sum_{k\in\Z}\[\Phi_k^N*|f_2|(x)\]^{p_2}|\widetilde D_k^{b_0\,*}M_{b_0}\beta(x)|^2dx\)^\frac{1}{p_2}\norm{f_0}{L^2}\\
&\hspace{-1cm}\less\norm{f_0}{L^2}\norm{f_1}{L^{p_1}}\norm{f_2}{L^{p_2}}.
\end{align*}
Note that in the last line we apply the discrete version of a well-known result relating Carleson measure and square functions due to Carleson \cite{C} and Jones \cite{J} for the Carleson measure
\begin{align*}
d\mu(x,t)=\sum_{k\in\Z}|\widetilde D_k^{b_0\,*}b_0\beta(x)|^2\delta_{t=2^{-k}}.
\end{align*}
For details of the discrete version of this result, see for example the book by Grafakos \cite{G}, Theorems 7.3.7 and 7.3.8(c).  This proves that $L$ is bounded from $L^{p_1}\times L^{p_2}$ into $L^2$ for all $1<p_1,p_2<\infty$ satisfying \eqref{Holder} with $p=2$.  It is easy to check that $\{\ell_k\}\in SBLPK$ with size index $N>2n$:  since $d_k^{b_0}$ and $s_k^{b_i}$ are compactly supported kernels, for $i=1,2$ it follows that 
\begin{align*}
|\ell_k(x,y_1,y_2)|&\leq||b_0\widetilde D_k^{b_0\,*}M_{b_0}\beta||_{L^\infty}\int_{\R^n}|d_k^{b_0}(x-u)s_k^{b_1}(u-y_1)s_k^{b_2}(u-y_2)|du\\
&\less2^{kn}\int_{\R^n}\Phi_k^{4(n+1)}(x-u)\Phi_k^{4(n+1)}(u-y_i)du\\
&\less2^{kn}\Phi_k^{4(n+1)}(x-y_i).
\end{align*}
Hence the size condition \eqref{BLPker1} with size index $N=2n+1$ and $\gamma=1$ follows
\begin{align*}
|\ell_k(x,y_1,y_2)|&\less\prod_{i=1}^2\(2^{kn}\Phi_k^{4(n+1)}(x-y_i)\)^\frac{1}{2}=\Phi_k^{2n+2}(x-y_1)\Phi_k^{2n+2}(x-y_2).
\end{align*}
The regularity estimates \eqref{BLPker2}-\eqref{BLPker4} follow easily from the regularity of $d_k^{b_0}$, $s_k^{b_1}$, and $s_k^{b_2}$.  Then by Lemma \ref{l:CZkernel}, $L$ has a standard Calder\'on-Zygmund kernel $\ell$.  It follows from a result of Grafakos-Torres \cite{GT1} that $L$ is bounded from $L^{p_1}\times L^{p_2}$ into $L^p$ where $1<p_1,p_2<\infty$ satisfy \eqref{Holder}.  Next we compute $M_{b_0}L(b_1,b_2)$:  Let $\delta>0$, $\phi\in C_0^\delta$ such that $\supp(\phi)\subset B(0,N)$ and $b_0\phi$ has mean zero.  Let $\eta_R(x)=\eta(x/R)$ where $\eta\in C_0^\infty$ satisfies $\eta\equiv1$ on $B(0,1)$, and $\supp(\eta)\subset B(0,2)$.  Then
\begin{align*}
&\<L(b_1,b_2),b_0\phi\>\\
&\hspace{.5cm}=\lim_{R\rightarrow\infty}\sum_{2^{-k}<R/4}\int_{\R^n} \widetilde D_k^{b_0\,^*}M_{b_0}\beta(x)S_k^{b_1}M_{b_1}\eta_R(x)S_k^{b_2}M_{b_2}\eta_R(x)M_{b_0}D_k^{b_0}(b_0\phi)(x)dx\\
&\hspace{1.5cm}+\lim_{R\rightarrow\infty}\sum_{2^{-k}\geq R/4}\int_{\R^n}\widetilde D_k^{b_0\,^*}M_{b_0}\beta(x)S_k^{b_1}M_{b_1}\eta_R(x)S_k^{b_2}M_{b_2}\eta_R(x)M_{b_0}D_k^{b_0}(b_0\phi)(x)dx,
\end{align*}
where we may write this only if the two limits on the right hand side of the equation exist.  As we are taking $R\rightarrow\infty$ and $N$ is a fixed quantity determined by $\phi$, without loss of generality assume that $R>2N$.  Note that for $2^{-k}\leq R/4$ and $|x|<N+2^{-k}$,
$$\supp(s_k^{b_i}(x,\cdot))\subset B(x,2^{-k})\subset B(0,N+2^{1-k})\subset B(0,R).$$
Since $\eta_R\equiv1$ on $B(0,R)$, it follows that $S_k^{b_i}M_{b_i}\eta_R(x)=1$ for all $|x|<N+2^{-k}$ when $2^{-k}\leq R/4$.  Therefore
\begin{align*}
\lim_{R\rightarrow\infty}\sum_{2^{-k}<R/4}\int_{\R^n} \widetilde D_k^{b_0\,^*}M_{b_0}\beta(x)M_{b_0}D_k^{b_0}(b_0\phi)(x)dx&=\int_{\R^n}\sum_{k\in\Z}M_{b_0}\widetilde D_k^{b_0}M_{b_0}D_kM_{b_0}\phi(x)\beta(x)dx\\
&=\<\beta,b_0\phi\>.
\end{align*}
Here we use the convergence of the accretive type reproducing formula in $H^1$ from Corollary \ref{c:accH1}.  For any $k\in\Z$, we have the estimates
\begin{align}
&||S_k^{b_i}M_{b_i}\eta_R||_{L^1}\less||\eta_R||_{L^1}\less R^n,\label{estimate1}\\
&||S_k^{b_i}M_{b_i}\eta_R||_{L^\infty}\less||\eta_R||_{L^\infty}=1,\label{estimate2}
\end{align}
and for any $x\in\R^n$
\begin{align*}
|D_k^{b_0}M_{b_0}\phi(x)|&\leq\int_{\R^n}|d_k^{b_0}(x,y)-d_k^{b_0}(x,0)|\,|b_0(y)\phi(y)|dy\\
&\less\int_{\R^n}(2^k|y|)^\gamma|\phi(y)|dy\less N^\gamma||\phi||_{L^1}2^{k(n+\gamma)}.
\end{align*}
Here we know that $\{d_k^{b_0}\}\in LPK$, so without loss of generality we take the corresponding smoothness parameter $\gamma\leq\delta$.  Later we will use that $\gamma\leq\delta$ implies $\phi\in C_0^\delta\subset C_0^\gamma$, so we have that $|\phi(x)-\phi(y)|\less|x-y|^\gamma$.  Therefore
\begin{align}
&\sum_{2^{-k}>R/4}\int_{\R^n}|\widetilde D_k^{b_0\,^*}M_{b_0}\beta(x)S_k^{b_1}M_{b_1}\eta_R(x)S_k^{b_2}M_{b_2}\eta_R(x)M_{b_0}D_k^{b_0}(b_0\phi)(x)|dx\notag\\
&\hspace{1cm}\leq\sum_{2^{-k}>R/4}||\widetilde D_k^{b_0\,^*}M_{b_0}\beta||_{L^\infty}||S_k^{b_1}M_{b_1}\eta_R||_{L^1}||S_k^{b_2}M_{b_2}\eta_R||_{L^\infty}||M_{b_0}D_k^{b_0}(b_0\phi)||_{L^\infty}\notag\\
&\hspace{1cm}\less R^n\sum_{2^{-k}>R/4}2^{k(n+\gamma)}\less R^{-\gamma}.\label{secondterm}
\end{align}
Hence the second limit above exists and tends to $0$ as $R\rightarrow\infty$.  Then $\<L(b_1,b_2),b_0\phi\>=\<\beta,b_0\phi\>$ for all $\phi\in  C_0^\delta$ such that $b_0\phi$ has mean zero and hence $M_{b_0}L(b_1,b_0)=\beta$ as defined in Section 2.  Again for any $\phi\in C_0^\delta$ such that $b_1\phi$ has mean zero and $\supp(\phi)\subset B(0,N)$, we have
\begin{align*}
&\left|\<L^{1*}(b_0,b_2),b_1\phi\>\right|\\
&\hspace{1cm}=\lim_{R\rightarrow\infty}\left|\sum_{k\in\Z}\int_{\R^n}\widetilde D_k^{b_0\,*}M_{b_0}\beta(x)S_k^{b_1}M_{b_1}\phi(x)S_k^{b_2}M_{b_2}\eta_R(x)D_k^{b_0}M_{b_0}\eta_R(x)b_0(x)dx\right|\\
&\hspace{1cm}\less\lim_{R\rightarrow\infty}\sum_{k\in\Z}||\widetilde D_k^{b_0\,*}M_{b_0}\beta||_{L^\infty}||S_k^{b_1}M_{b_1}\phi||_{L^1}||S_k^{b_2}M_{b_2}\eta_R||_{L^\infty}||D_k^{b_0}M_{b_0}\eta_R||_{L^\infty}\\
&\hspace{1cm}\less\lim_{R\rightarrow\infty}\sum_{k\in\Z}||S_k^{b_1}M_{b_1}\phi||_{L^1}||D_k^{b_0}M_{b_0}\eta_R||_{L^\infty}.
\end{align*}
We will now show that $||S_k^{b_1}M_{b_1}\phi||_{L^1}||D_k^{b_0}M_{b_0}\eta_R||_{L^\infty}$ bounded by a in integrable function in $k$ (i.e. summable) independent of $R$, so that we can bring the limit in $R$ inside the sum.  To do this we start by estimating
\begin{align*}
|S_k^{b_1}M_{b_1}\phi(x)|&\leq\int_{\R^n}|s_k^{b_1}(x,y)-s_k^{b_1}(x,0)|\,|\phi(y)b_1(y)|dy\\
&\leq N^\gamma||\phi||_{L^1}||b_1||_{L^\infty}2^{\gamma k}\(\Phi_0^N(x)+\Phi_k^N(x)\)
\end{align*}
and so $||S_k^{b_1}M_{b_1}\phi||_{L^1}\less2^{\gamma k}$.  We also have that $||S_k^{b_1}M_{b_1}\phi||_{L^1}\less||\phi||_{L^1}\less1$, so \\$||S_k^{b_1}M_{b_1}\phi||_{L^1}\less\min(1,2^{\gamma k})$.  Also
\begin{align*}
|D_k^{b_0}M_{b_0}\eta_R(x)|&\leq\int_{\R^n}|d_k^{b_0}(x,y)|\,|\eta_R(y)-\eta_R(x)|\,|b_0(y)|dy\\
&\less2^{-\gamma k}R^{-\gamma}\int_{\R^n}\Phi_k^{N+\gamma}(x-y)(2^k|x-y|)^\gamma dy\less2^{-\gamma k}R^{-\gamma}.
\end{align*}
It follows that $||D_k^{b_0}M_{b_0}\eta_R||_{L^\infty}\less||\eta_R||_{L^\infty}\less1$, and hence $||D_k^{b_0}M_{b_0}\eta_R||_{L^\infty}\less\min(1,2^{-\gamma k})$.  So when $R>1$, we have 
\begin{align*}
||D_k^{b_0}M_{b_0}\eta_R||_{L^\infty}||S_k^{b_1}M_{b_1}\phi||_{L^1}\less\min(2^{-\gamma k }R^{-\gamma},2^{\gamma k})\leq2^{-\gamma|k|},
\end{align*}
and hence by dominated convergence
\begin{align*}
\left|\<L^{1*}(b_0,b_2),b_1\phi\>\right|&\less\sum_{k\in\Z}\lim_{R\rightarrow\infty}||S_k^{b_1}M_{b_1}\phi||_{L^1}||D_k^{b_0}M_{b_0}\eta_R||_{L^\infty}\less\sum_{k\in\Z}\lim_{R\rightarrow\infty}2^{-k\gamma}R^{-\gamma}=0
\end{align*}
Then $M_{b_1}L^{*1}(b_0,b_2)=0$ and a similar argument shows that $M_{b_2}L^{*2}(b_1,b_0)=0$, which concludes the proof.
\end{proof}

Now to complete the proof of Theorem \ref{t:fullTb} is a standard argument using the reduced Tb Theorem \ref{t:reducedTb} and paraproducts constructioned in Theorem \ref{t:paraproduct}.

\begin{proof}
Assume that $M_{b_0}T(M_{b_1}\,\cdot,M_{b_2}\,\cdot)$ satisfies the weak boundedness property and 
\begin{align*}
M_{b_0}T(b_1,b_2),M_{b_1}T^{*1}(b_0,b_2),M_{b_2}T^{*2}(b_1,b_0)\in BMO.
\end{align*}
By Theorem \ref{t:paraproduct}, there exist bounded bilinear Calder\'on-Zygmund operators $L_i$ such that 
\begin{align*}
&M_{b_0}L_0(b_1,b_2)=M_{b_0}T(b_1,b_2),& &M_{b_1}L_0^{*1}(b_0,b_2)=M_{b_2}L_0^{*2}(b_1,b_0)=0,&\\
&M_{b_1}L_1^{*1}(b_0,b_2)=M_{b_1}T^{*1}(b_0,b_2),& &M_{b_0}L_1(b_1,b_2)=M_{b_2}L_1^{*2}(b_1,b_0)=0,&\\
&M_{b_2}L_2^{*2}(b_1,b_0)=M_{b_2}T^{*2}(b_1,b_0),& &M_{b_1}L_2^{*1}(b_0,b_2)=M_{b_0}L_2(b_1,b_2)=0.&
\end{align*}
Now define the operator
\begin{align*}
S=T-\sum_{i=0}^2L_i,
\end{align*}
which is continuous from $b_1C_0^\delta\times b_2C_0^\delta$ into $(b_0C_0^\delta)'$.  Also $M_{b_0}S(M_{b_1}\,\cdot,M_{b_2}\,\cdot)$ satisfies the weak boundedness property since $M_{b_0}T(M_{b_1}\,\cdot,M_{b_2}\,\cdot)$ and $M_{b_0}L_i(M_{b_1}\,\cdot,M_{b_2}\,\cdot)$ for $i=0,1,2$ do.  Finally we have
\begin{align*}
&M_{b_0}S(b_1,b_2)=M_{b_0}T(b_1,b_2)-\sum_{i=0}^2M_{b_0}L_i(b_1,b_2)=0&\\
&M_{b_1}S^{*1}(b_0,b_2)=M_{b_1}T^{*1}(b_0,b_2)-\sum_{i=0}^2M_{b_1}L_i^{*1}(b_0,b_2)=0&\\
&M_{b_2}S^{*2}(b_1,b_0)=M_{b_2}T^{*2}(b_1,b_0)-\sum_{i=0}^2M_{b_2}L_i^{*2}(b_1,b_0)=0&
\end{align*}
Then by Theorem \ref{t:reducedTb}, $S$ can be extended to a bounded linear operator from $L^{p_1}\times L^{p_2}$ into $L^p$ for all $1<p,p_1,p_2<\infty$ satisfying \eqref{Holder}.  Therefore $T$ is bounded on the same indices, and by results from \cite{GT1}, $T$ is also bounded without restriction on $p$.  The converse is also a well-known result from \cite{GT1}.
\end{proof}

\section{Application to a Bilinear Reisz Transforms defined on Lipschitz Curves}

In this section, we apply the bilinear Tb theorem proved above to a bilinear version of the Reisz transforms along Lipschitz curves in the complex plane.  We prove bounds of the form 
\begin{align*}
||T(f_1,f_2)||_{L^p(\Gamma_0)}\less||f_1||_{L^{p_1}(\Gamma)}||f_2||_{L^{p_2}(\Gamma)}
\end{align*}
for parameterized Lipschitz curves $\Gamma$ and $p,p_1,p_2$ satisfying H\"older.  

We fix some notation for this section.  Let $L$ be a Lipschitz function with small Lipschitz constant $\lambda<1$, and define the parameterization $\gamma(x)=x+iL(x)$ of the curve $\Gamma=\{\gamma(x):x\in\R\}$.  Define $L^p(\Gamma)$ to be the collection of all measurable functions $f:\Gamma\rightarrow\C$ such that
\begin{align*}
||f||_{L^p(\Gamma)}=\(\int_\Gamma|f(z)|^p|dz|\)^\frac{1}{p}=\(\int_\R|f(\gamma(x))|^p|\gamma'(x)|dx\)^\frac{1}{p}<\infty.
\end{align*}

The applications in this section are in part motivated by the proof of $L^p$ bounds for the Cauchy integral using the Tb theorem of David-Journ\'e-Semmes \cite{DJS}.  We define the Cauchy integral operator for appropriate $g:\Gamma\rightarrow\C$ and $z\in\Gamma$
\begin{align*}
\mathcal C_\Gamma g(z)=\lim_{\epsilon\rightarrow0^+}\int_\Gamma\frac{g(\xi)d\xi}{(\xi+i\epsilon)-z},
\end{align*}
and parameterized Cauchy integral operator for $f:\R\rightarrow\C$ and $x\in\R$
\begin{align*}
\widetilde{\mathcal C}_\Gamma f(z)=\lim_{\epsilon\rightarrow0^+}\int_\R\frac{f(y)dy}{(\gamma(y)+i\epsilon)-\gamma(x)}.
\end{align*}
The bounds of $\mathcal C_\Gamma$ on $L^p(\Gamma)$ can be reduced to the bounds of $\widetilde{\mathcal C}_\Gamma$ on $L^p(\R)$.  We formally check the Tb conditions for $\widetilde {\mathcal C}_\Gamma$ with $b_0=b_1=\gamma'$ needed to apply the Tb theorem of David-Journ\'e-Semmes:  We check (1) $\widetilde {\mathcal C}_\Gamma(\gamma')\in BMO$
\begin{align*}
\widetilde {\mathcal C}_\Gamma\gamma'(x)&=\lim_{\epsilon\rightarrow0^+}\int_\R\frac{\gamma'(y)dy}{(\gamma(y)+i\epsilon)-\gamma(x)}=\lim_{\epsilon\rightarrow0^+}\int_\Gamma\frac{d\xi}{(\xi+i\epsilon)-\gamma(x)}=2\pi i
\end{align*}
and (2) $\widetilde {\mathcal C}_\Gamma^*(\gamma')\in BMO$, for appropriate $\phi\in C_0^\infty$
\begin{align*}
\widetilde {\mathcal C}_\Gamma^*\gamma'(x)&=\lim_{\epsilon\rightarrow0^+}\int_\R\frac{\gamma'(y)dy}{(\gamma(x)+i\epsilon)-\gamma(y)}=\lim_{\epsilon\rightarrow0^+}-\int_\Gamma\frac{d\xi}{(\xi-i\epsilon)-\gamma(x)}=0.
\end{align*}
The crucial role that Cauchy's formula plays in this argument is to be able to evaluate the limit from the definition of $\widetilde{\mathcal C}_\Gamma$ for nice enough input functions $\gamma'(x)f(x)$.  In our application, we use a similar argument except the role of Cauchy's integral formula is replaced with an integration by parts identity to verify the WBP and Tb conditions.

To further motivate looking at bilinear Reisz transforms along Lipschitz curve, we look at the ``flat'' bilinear Riesz tranforms, which we generate from a potential function perspective.  Consider the potential function
\begin{align*}
F(x,y_1,y_2)=\frac{1}{((x-y_1)^2+(x-y_2)^2)^{1/2}},
\end{align*}
and the kernels that it generates:
\begin{align*}
K_0(x,y_1,y_2)&=\partial_xF(x,y_1,y_2)=\frac{2x-y_1-y_2}{((x-y_1)^2+(x-y_2)^2)^{3/2}},\\
K_1(x,y_1,y_2)&=\partial_{y_1}F(x,y_1,y_2)=\frac{x-y_1}{((x-y_1)^2+(x-y_2)^2)^{3/2}},\\
K_2(x,y_1,y_2)&=\partial_{y_2}F(x,y_1,y_2)=\frac{x-y_2}{((x-y_1)^2+(x-y_2)^2)^{3/2}}.
\end{align*}
We define the bilinear Reisz transforms as principle value integrals for $f_1,f_2\in C_0^\infty$,
\begin{align*}
R_j(f_1,f_2)(x)=p.v.\int_{\R^2}K_j(x,y_1,y_2)f_1(y_1)f_2(y_2)dy_1\,dy_2.
\end{align*}
Here is it only interesting to study two of these three operators since $R_0=R_1+R_2$.  The bilinear T1 theorem of Christ-Journ\'e \cite{CJ} or Grafakos-Torres \cite{GT1} can be applied to the bilinear Riesz transforms $R_1$:  We formally check (1) $R_1(1,1)\in BMO$
\begin{align*}
R_1(1,1)(x)&=-\int_{\R^2} F(x,y_1,y_2)\partial_{y_1}(1)dy_1\,dy_2=0,
\end{align*}
(2) $R_1^{*1}(1,1)\in BMO$
\begin{align*}
R_1^{*1}(1,1)(y_1)&=\int_{\R^2}\partial_{y_1}F(x,y_1,y_2)dx\,dy_2\\
&=\int_{\R^2}(\partial_xF(x,y_1,y_2)-\partial_{y_2}F(x,y_1,y_2))dx\,dy_2\\
&=-\int_{\R^2}F(x,y_1,y_2)(\partial_x(1)-\partial_{y_2}(1))dx\,dy_2=0,
\end{align*}
and (3) $R_1^{*2}(1,1)\in BMO$
\begin{align*}
R_1^{*2}(1,1)(y_2)&=-\int_{\R^2}F(x,y_1,y_2)\partial_{y_1}(1)dx\,dy_1=0.
\end{align*}
Here we observe that the conditions $R_1(1,1),R_1^{*2}(1,1)=0$ are identical arguments and rely on the cancellation of the kernel $K_1$.  The $R_1^{*1}(1,1)$ condition relies on more than just the cancellation $K_1$; it also exploits the symmetry of the kernel via the identity $\partial_{y_1}F(x,y_1,y_2)=\partial_xF(x,y_1,y_2)-\partial_{y_2}F(x,y_1,y_2)$.  This is the general argument that we will use to prove $L^p$ bounds for bilinear Riesz transforms defined along Lipschitz curves, which we define now.

For $z,\xi_1,\xi_2\in\Gamma$, define the potential function
\begin{align*}
F_\Gamma(z,\xi_1,\xi_2)=\frac{1}{((z-\xi_1)^2+(z-\xi_2)^2)^{1/2}},
\end{align*}
and the Riesz kernels generated by $F$:
\begin{align*}
K_{\Gamma,0}(z,\xi_1,\xi_2)&=\partial_zF_\Gamma(\xi,\xi_1,\xi_2)=\frac{2z-\xi_1-\xi_2}{((z-\xi_1)^2+(z-\xi_2)^2)^{3/2}},\\
K_{\Gamma,1}(z,\xi_1,\xi_2)&=\partial_{\xi_1}F_\Gamma(z,\xi_1,\xi_2)=\frac{z-\xi_1}{((z-\xi_1)^2+(z-\xi_2)^2)^{3/2}},\\
K_{\Gamma,2}(z,\xi_1,\xi_2)&=\partial_{\xi_2}F_\Gamma(z,\xi_1,\xi_2)=\frac{z-\xi_2}{((z-\xi_1)^2+(z-\xi_2)^2)^{3/2}}.
\end{align*}
In the remainder of this section, we will keep the notation $z=\gamma(x)$, $\xi_1=\gamma(y_1)$, $\xi_2=\gamma(y_2)$, $y_0=x$, and $\xi_0=z$.  Here we define $\sqrt{\;\cdot\;}$ on $\C$ with the negative real axis for a branch cut, i.e.  for $\omega=r\,e^{i\theta}\in\C$ with $r>0$ and $\theta\in(-\pi,\pi]$, we define $\sqrt\omega=\sqrt r\,e^{i\theta/2}$.  We make this definition to be precise, but it will not cause any issues with computations since we will only evaluate $\sqrt\omega$ for $\omega\in\C$ with positive real part.

For appropriate $g_1,g_2:\Gamma\rightarrow\C$ and $z\in\Gamma$, we define
\begin{align}
C_{\Gamma,j}(g_1,g_2)(z)&=p.v.\int_{\Gamma^2}K_{\Gamma,j}(z,\xi_1,\xi_2)g_1(\xi_1)g_2(\xi_2)d\xi_1\,d\xi_2\notag\\
&=\lim_{\epsilon\rightarrow0^+}\int_{|Re(z-\xi_1)|,|Re(z-\xi_2)|>\epsilon}K_{\Gamma,j}(z,\xi_1,\xi_2)g_1(\xi_1)g_2(\xi_2)d\xi_1\,d\xi_2.\label{defnCgamma}
\end{align}
Initially we take this definition for $g_j=f_j\circ\gamma^{-1}$ for $f_j\in C_0^\infty(\R)$, $j=1,2$, but even for such $g_j$ it is not yet apparent that this limit exists.  We will establish that this limit exists, and furthermore that $C_{\Gamma,j}$ can be continuously extended to a bilinear operator from $L^{p_1}(\Gamma)\times L^{p_2}(\Gamma)$ into $L^p(\Gamma)$.  To prove these things, we will pass through ``parameterized'' versions of $F$, $K_j$, and $C_{\Gamma,j}$ for $j=0,1,2$ in the same way that David-Journ\'e-Semmes did to apply their Tb theorem to the Cauchy integral operator in \cite{DJS}:  For $x,y_1,y_2\in\R$, define
\begin{align*}
&\widetilde F_\Gamma(x,y_1,y_2)=F_\Gamma(\gamma(x),\gamma(y_1),\gamma(y_2)),& &\widetilde K_{\Gamma,j}(x,y_1,y_2)=K_\Gamma(\gamma(x),\gamma(y_1),\gamma(y_2)),&
\end{align*}
and for $f_1,f_2\in C_0^\infty(\R)$, define for $x\in\R$
\begin{align}
M_{\gamma'}\widetilde C_{\Gamma,j}(\gamma' f_1,\gamma' f_2)(x)&=p.v.\int_{\R^2}\widetilde K_j(x,y_1,y_2)f_1(y_1)f_2(y_2)\gamma'(x)\gamma'(y_1)\gamma'(y_2) dy_1\, dy_2\notag\\
&\hspace{-1cm}=\lim_{\epsilon\rightarrow0^+}\int_{|x-y_1|,|x-y_2|>\epsilon}\widetilde K_j(x,y_1,y_2)f_1(y_1)f_2(y_2)\gamma'(x)\gamma'(y_1)\gamma'(y_2) dy_1\, dy_2.\label{Riesz}
\end{align}
for $j=0,1,2$.  We begin by proving that $\widetilde C_{\Gamma,j}$ for $j=0,1,2$ is well defined, and find an absolutely convergent integral representation for it that depends on derivatives of the input functions $f_1,f_2\in\C_0^\infty(\R)$. 

\begin{proposition}\label{p:welldefined1}
Let $L$ be a Lipschitz function with Lipschitz constant $\lambda<1$ such that for almost every $x\in\R$ the limits
\begin{align*}
&\lim_{\epsilon\rightarrow0^+}\gamma'(x+\epsilon)=\gamma'(x+)& &\text{and}& &\lim_{\epsilon\rightarrow0^+}\gamma'(x-\epsilon)=\gamma'(x-)&
\end{align*}
exist.  Then $M_{\gamma'}\widetilde C_{\Gamma,j}(\gamma' f_1,\gamma' f_2)$ is an almost everywhere well defined function for $f_1,f_2\in C_0^\infty(\R)$ and $j=0,1,2$.  More precisely, for $f_1,f_2\in C_0^\infty(\R)$ the limit in \eqref{Riesz} converges for almost every $x\in\R$ and
\begin{align*}
M_{\gamma'}\widetilde C_{\Gamma,1}(\gamma' f_1,\gamma' f_2)(x)&=-\int_{\R^2}\widetilde F_{\Gamma}(x,y_1,y_2)f_1'(y_1) f_2(y_2)\gamma'(x)\gamma'(y_2)dy_1\,dy_2,\\
M_{\gamma'}\widetilde C_{\Gamma,2}(\gamma' f_1,\gamma' f_2)(x)&=-\int_{\R^2}\widetilde F_{\Gamma}(x,y_1,y_2)f_1(y_1) f_2'(y_2)\gamma'(x)\gamma'(y_1)dy_1\,dy_2.
\end{align*}
Furthermore $\widetilde C_{\Gamma,j}$ is continuous from $\gamma'C_0^1(\R)\times\gamma'C_0^1(\R)$ into $(\gamma'C_0^1(\R))'$, and for $ f_0, f_1, f_2\in C_0^\infty(\R)$,
\begin{align*}
&\<\widetilde C_{\Gamma,0}(\gamma' f_1,\gamma' f_2),\gamma' f_0\>=-\int_{\R^3}\widetilde F_{\Gamma}(x,y_1,y_2) f_0'(x) f_1(y_1) f_2(y_2)\gamma'(y_1)\gamma'(y_2)dx\,dy_1\,dy_2\\
&\<\widetilde C_{\Gamma,1}(\gamma' f_1,\gamma' f_2),\gamma' f_0\>=-\int_{\R^3}\widetilde F_{\Gamma}(x,y_1,y_2) f_0(x) f_1'(y_1) f_2(y_2)\gamma'(x)\gamma'(y_2)dx\,dy_1\,dy_2\\
&\<\widetilde C_{\Gamma,2}(\gamma' f_1,\gamma' f_2),\gamma' f_0\>=-\int_{\R^3}\widetilde F_{\Gamma}(x,y_1,y_2) f_0(x) f_1(y_1) f_2'(y_2)\gamma'(x)\gamma'(y_1)dx\,dy_1\,dy_2.
\end{align*}
\end{proposition}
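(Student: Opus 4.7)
The strategy is to reduce the truncated principal-value integral in \eqref{Riesz} to an absolutely convergent one via integration by parts (IBP), using the chain-rule identities $\partial_{y_j}\widetilde F_\Gamma(x,y_1,y_2) = \gamma'(y_j)\widetilde K_{\Gamma,j}(x,y_1,y_2)$ for $j=1,2$ and $\gamma'(x)\widetilde K_{\Gamma,0}(x,y_1,y_2) = \mp\partial_x\widetilde F_\Gamma(x,y_1,y_2)$ to transfer one derivative onto the appropriate test function. First, I would establish absolute convergence of the asserted integrals: writing $\gamma(x)-\gamma(y_j)=(x-y_j)(1+i\alpha_j)$ with $|\alpha_j|\leq\lambda<1$, a direct computation gives
\begin{equation*}
\mathrm{Re}\bigl((\gamma(x)-\gamma(y_1))^2+(\gamma(x)-\gamma(y_2))^2\bigr)\geq(1-\lambda^2)\bigl((x-y_1)^2+(x-y_2)^2\bigr),
\end{equation*}
so the argument of the square root in $\widetilde F_\Gamma$ lies in the open right half-plane and $|\widetilde F_\Gamma(x,y_1,y_2)|\less((x-y_1)^2+(x-y_2)^2)^{-1/2}$. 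Combined with compact support of $f_0,f_1,f_2$ and local integrability of this two-dimensional $1/r$ singularity, this yields absolute convergence of both the $(y_1,y_2)$-integrals in the pointwise formulas and the $(x,y_1,y_2)$-integrals in the dual pairings.

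For $j=1$ (the case $j=2$ is symmetric), fix $x$ and $y_2$ with $y_2\neq x$; since $\gamma$ is Lipschitz, the map $y_1\mapsto\widetilde F_\Gamma(x,y_1,y_2)$ is absolutely continuous on $\R$ with a.e.\ derivative $\gamma'(y_1)\widetilde K_{\Gamma,1}(x,y_1,y_2)$. Standard IBP on $(-\infty,x-\epsilon]$ and $[x+\epsilon,\infty)$ then yields
\begin{align*}
\int_{|x-y_1|>\epsilon}\gamma'(y_1)\widetilde K_{\Gamma,1}(x,y_1,y_2)f_1(y_1)dy_1&=\widetilde F_\Gamma(x,x-\epsilon,y_2)f_1(x-\epsilon)\\
&\quad-\widetilde F_\Gamma(x,x+\epsilon,y_2)f_1(x+\epsilon)\\
&\quad-\int_{|x-y_1|>\epsilon}\widetilde F_\Gamma(x,y_1,y_2)f_1'(y_1)dy_1.
\end{align*}
After multiplying by $\gamma'(x)\gamma'(y_2)f_2(y_2)$ and integrating over $|x-y_2|>\epsilon$, the bulk term converges by dominated convergence to the asserted pointwise formula. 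The principal obstacle is verifying that the boundary contribution vanishes as $\epsilon\to 0^+$. Decompose its integrand as
$[\widetilde F_\Gamma(x,x-\epsilon,y_2)-\widetilde F_\Gamma(x,x+\epsilon,y_2)]f_1(x-\epsilon)+\widetilde F_\Gamma(x,x+\epsilon,y_2)[f_1(x-\epsilon)-f_1(x+\epsilon)]$.
The second summand is $O(\epsilon\log\epsilon^{-1})$ because $|f_1(x-\epsilon)-f_1(x+\epsilon)|\less\epsilon$ and $\int_{|x-y_2|>\epsilon,\,|y_2|\leq R}|\widetilde F_\Gamma(x,x+\epsilon,y_2)|dy_2=O(\log\epsilon^{-1})$. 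For the first summand, at every $x$ where $\gamma$ is classically differentiable (a set of full measure), the fundamental theorem of calculus together with the hypothesis forces $\gamma'(x+)=\gamma'(x-)=\gamma'(x)$, so $\gamma(x)-\gamma(x\pm\epsilon)=\mp\epsilon\gamma'(x)+o(\epsilon)$ and hence $|(\gamma(x)-\gamma(x-\epsilon))^2-(\gamma(x)-\gamma(x+\epsilon))^2|=o(\epsilon^2)$. Using the algebraic identity $a^{-1/2}-b^{-1/2}=(b-a)/\bigl(a^{1/2}b^{1/2}(a^{1/2}+b^{1/2})\bigr)$ together with the lower bound from the first paragraph, one obtains $|\widetilde F_\Gamma(x,x-\epsilon,y_2)-\widetilde F_\Gamma(x,x+\epsilon,y_2)|\less o(\epsilon^2)|x-y_2|^{-3}$, and since $\int_{|x-y_2|>\epsilon}|x-y_2|^{-3}dy_2=O(\epsilon^{-2})$, this summand is also $o(1)\to 0$.

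For the dual pairings: in the cases $j=1,2$, multiply the pointwise identity by $\gamma'(x)f_0(x)$ and invoke Fubini on the absolutely convergent triple integral. For $j=0$, run the analogous IBP argument in the $x$-variable using $\gamma'(x)\widetilde K_{\Gamma,0}=\mp\partial_x\widetilde F_\Gamma$; the boundary terms at $x=y_1\pm\epsilon$ and $x=y_2\pm\epsilon$ vanish by the same argument applied in $x$ at the full-measure set of points where $\gamma$ is classically differentiable. Continuity of $\widetilde C_{\Gamma,j}$ from $\gamma'C_0^1\times\gamma'C_0^1$ into $(\gamma'C_0^1)'$ then follows immediately from the absolutely convergent dual-pairing formulas: convergence in the Lipschitz norm $\|\cdot\|_1$ implies uniform convergence of each $f_i$ and its a.e.\ derivative on a common compact set, so dominated convergence passes through the integrals.
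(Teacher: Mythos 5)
Your proof is correct and follows essentially the same strategy as the paper: establish the lower bound $\mathrm{Re}((\gamma(x)-\gamma(y_1))^2+(\gamma(x)-\gamma(y_2))^2)\gtrsim (x-y_1)^2+(x-y_2)^2$, integrate by parts to pass to an absolutely convergent integral, and show the boundary terms vanish as $\epsilon\to0^+$ via the same two-term decomposition (the difference of $\widetilde F_\Gamma$'s times $f_1(x-\epsilon)$, plus $\widetilde F_\Gamma$ times the difference of $f_1$'s). The one place you diverge is in the treatment of the first boundary term $[\widetilde F_\Gamma(x,x-\epsilon,y_2)-\widetilde F_\Gamma(x,x+\epsilon,y_2)]f_1(x-\epsilon)$. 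The paper substitutes $y_2\mapsto x-\epsilon y_2$, verifies pointwise vanishing and an $\epsilon$-uniform $(1+|y_2|)^{-3}$ majorant, and invokes dominated convergence; it is in the pointwise-limit step after that substitution that the paper's hypothesis on the one-sided limits $\gamma'(x\pm)$ is explicitly used to handle the factor $\gamma'(x-\epsilon y_2)$. You instead estimate directly through $a^{-1/2}-b^{-1/2}=(b-a)/(a^{1/2}b^{1/2}(a^{1/2}+b^{1/2}))$, using $|b-a|=o(\epsilon^2)$ at points of differentiability and the cosine inequality $\mathrm{Re}\,\sqrt{\omega}\geq\sqrt{\mathrm{Re}\,\omega}$ to bound the denominator from below by $|x-y_2|^3$. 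This is a genuine (if minor) simplification: it avoids the change of variables and, as written, needs only a.e.\ differentiability rather than existence of the one-sided limits. Your parenthetical FTC remark that $\gamma'(x+)=\gamma'(x-)=\gamma'(x)$ a.e.\ is true (it holds at Lebesgue points of $\gamma'$ where the one-sided limits exist) but is not actually used anywhere in your subsequent estimates, so it can be dropped. The $j=0$ case and the continuity statement are handled the same way as in the paper's argument.
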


\begin{proof}
Fix $f_1,f_2\in C_0^\infty$, and we start by showing that $M_{b_0}\widetilde C_{\Gamma,1}(\gamma' f_1,\gamma'f_2)$ can be realized as a bounded function.   Define for $\epsilon>0$ and $x\in\R$
\begin{align*}
C_\epsilon(x)=\int_{|x-y_1|,|x-y_2|>\epsilon}\widetilde K_{\Gamma,1}(x,y_1,y_2)f_1(y_1) f_2(y_2)\gamma'(x)\gamma'(y_1)\gamma'(y_2) dy_1\,dy_2.
\end{align*}
Note that $\partial_{y_1}\widetilde F_{\Gamma}(x,y_1,y_2)=\widetilde K_{\Gamma,1}(x,y_1,y_2)\gamma'(y_1)$, and we integrate by parts to rewrite $C_\epsilon$
\begin{align*}
C_\epsilon(x)&=\int_{|x-y_1|,|x-y_2|>\epsilon}\partial_{y_1}\widetilde F_{\Gamma}(x,y_1,y_2) f_1(y_1) f_2(y_2)\gamma'(x)\gamma'(y_2)dy_1\,dy_2\\
&=-\int_{|x-y_1|,|x-y_2|>\epsilon}\widetilde F_{\Gamma}(x,y_1,y_2) f_1'(y_1) f_2(y_2)\gamma'(x)\gamma'(y_2)dy_1\,dy_2\\
&\hspace{1cm}+\int_{|x-y_2|>\epsilon}\widetilde F_{\Gamma}(x,x-\epsilon,y_2) f_1(x-\epsilon) f_2(y_2)\gamma'(x)\gamma'(y_2)dy_2\\
&\hspace{1cm}-\int_{|x-y_2|>\epsilon}\widetilde F_{\Gamma}(x,x+\epsilon,y_2) f_1(x+\epsilon) f_2(y_2)\gamma'(x)\gamma'(y_2)dy_2\\
&=-\int_{|x-y_1|,|x-y_2|>\epsilon}\widetilde F_{\Gamma}(x,y_1,y_2) f_1'(y_1) f_2(y_2)\gamma'(x)\gamma'(y_2)dy_1\,dy_2\\
&\hspace{1cm}+\int_{|x-y_2|>\epsilon}\(\widetilde F_{\Gamma}(x,x-\epsilon,y_2)-\widetilde F_{\Gamma}(x,x+\epsilon,y_2)\) f_1(x-\epsilon) f_2(y_2)\gamma'(x)\gamma'(y_2)dy_2\\
&\hspace{1cm}+\int_{|x-y_2|>\epsilon}\widetilde F_{\Gamma}(x,x+\epsilon,y_2)( f_1(x-\epsilon)- f_1(x+\epsilon)) f_2(y_2)\gamma'(x)\gamma'(y_2)dy_2\\
&=I_\epsilon(x)+II_\epsilon(x)+III_\epsilon(x).
\end{align*}
We use that $f_1\in C_0^\infty(\R)$ to conclude that the when integrating by parts, the boundary terms at $y_1=\pm\infty$ vanish, leaving the $y_1=x\pm\epsilon$ above.  We now verify that the limits of $I_\epsilon(x)$, $II_\epsilon(x)$, and $III_\epsilon(x)$ each exist as $\epsilon\rightarrow0$.\\

\noindent\underline{$I_\epsilon$ converges:}  To compute this limit, we verify that the integrand of $I_\epsilon$ is an integrable function.  Note that $||L'||_{L^\infty}=\lambda<1$ implies 
\begin{align*}
|\widetilde F_\Gamma(x,y_1,y_2)|&\leq\frac{1}{|Re\((x-y_1)^2-(L(x)-L(y_1))^2+(x-y_2)^2-(L(x)-L(y_2))^2\)|^{1/2}}\\
&\less\frac{1}{(1-\lambda)^{1/2}}\frac{1}{|x-y_1|+|x-y_2|}.
\end{align*}
Now let $R_0>0$ be large enough so that $\supp( f_j)\subset B(0,R_0)$ for $j=1,2$, and it follows that

\begin{align*}
&\int_{\R^2}|\widetilde F_{\Gamma}(x,y_1,y_2) f_1'(y_1) f_2(y_2)\gamma'(x)\gamma'(y_2)|dy_1\,dy_2\\
&\hspace{.75cm}\leq\frac{||\gamma'||_{L^\infty}^2}{(1-\lambda)^{1/2}}\int_{|y_1|,|y_2|\leq R_0}\frac{|| f_1'||_{L^\infty}|| f_2||_{L^\infty}}{|x-y_1|+|x-y_2|}dy_1\,dy_2\less\frac{R_0}{(1-\lambda)^{1/2}}||f_1'||_{L^\infty}||f_2||_{L^\infty}.
\end{align*}
Therefore $I_\epsilon$ converges to an absolutely convergent integral as $\epsilon\rightarrow0$.\\

\noindent\underline{$II_\epsilon\rightarrow0$}:  First we make a change of variables in $II_\epsilon$ to rewrite 
\begin{align}
II_\epsilon=\int_{|y_2|>1}&h_\epsilon(x,y_2) f_1(x-\epsilon) f_2( x-\epsilon y_2)\gamma'( x-\epsilon y_2)dy_2,\label{IIe}\\
\text{ where }\;\;\;&h_\epsilon(x,y_2)=\epsilon\gamma'(x)\(\widetilde F_{\Gamma}(x,x-\epsilon, x-\epsilon y_2)-\widetilde F_{\Gamma}(x,x+\epsilon, x-\epsilon y_2)\).\notag
\end{align}
We wish to apply dominated convergence to $II_\epsilon$ as it is written in \eqref{IIe}.  First we show that the integrand converges to zero almost everywhere (in particular for every $y_2\neq 0$ and $x$ such that $\gamma'(x)$ exists):  For $y_2>0$ it follows that $ f_2(x-\epsilon y_2)\gamma'(x-\epsilon y_2)\rightarrow f_2(x)\gamma'(x-)$ as $\epsilon\rightarrow0^+$.  When $y_2<0$, it follows that $ f_2(x-\epsilon y_2)\gamma'(x-\epsilon y_2)\rightarrow f_2(x)\gamma'(x+)$ as $\epsilon\rightarrow0^+$.  So either way, the limit exists for $y_2\neq0$ and almost every $x$.  Now we show that $h_\epsilon(x,y_2)\rightarrow0$ for almost every $x,y_2\in\R$.  For any $x\in\R$ such that $\gamma'(x)$ exists and $y_2\neq0$, we compute
\begin{align*}
\lim_{\epsilon\rightarrow0}\epsilon\gamma'(x)\widetilde F_{\Gamma}(x,x-\epsilon, x-\epsilon y_2)&=\lim_{\epsilon\rightarrow0}\frac{\gamma'(x)}{\(\frac{(\gamma(x)-\gamma(x-\epsilon))^2}{\epsilon^2}+y_2^2\frac{(\gamma(x)-\gamma(x-\epsilon y_2))^2}{(\epsilon y_2)^2}\)^{1/2}}\\
&=\frac{\gamma'(x)}{\(\gamma'(x)^2+y_2^2\gamma'(x)^2\)^{1/2}}=\(1+y_2^2\)^{-1/2}.
\end{align*}
It follows that $\epsilon\gamma'(x)\widetilde F_{\Gamma}(x,x+\epsilon, x-\epsilon y_2)\rightarrow\(1+y_2^2\)^{-1/2}$ as well.  Therefore $h_\epsilon(x,y_2)\rightarrow0$ as $\epsilon\rightarrow0$ for all $x,y_2\in\R$ such that $\gamma'(x)$ exists and $y_2\neq0$.   Now in order to apply dominated convergence to \eqref{IIe}, we need only to show that $h_\epsilon(x,y_2)$ is integrable in $y_2$ independent of $\epsilon$.  Define $g_t=\gamma(x)-\gamma(x-t)$, which satisfies for all $s,t\in\R$
\begin{align*}
&|g_t|=|\gamma(x)-\gamma(x-t)|\leq||\gamma'||_{L^\infty}|t|\leq2|t|\\
&Re(g_s^2+g_t^2)=Re\[\(\gamma(x)-\gamma(x-s)\)^2\]+Re\[\(\gamma(x)-\gamma(x-t)\)^2\]\geq (1-\lambda)(s^2+t^2).
\end{align*}
Also it is easy to verify that if $\omega=re^{i\theta},\zeta=\rho e^{i\phi}\in\C$ both have positive real part, i.e. $\theta,\phi\in(-\pi/2,\pi/2)$, then
\begin{align*}
\left|\sqrt{\omega}+\sqrt{\zeta}\right|&\geq  \sqrt r\cos(\theta/2)+\sqrt\rho \cos(\phi/2)\\
&\geq \sqrt{ r\cos(\theta)}+\sqrt{\rho \cos(\phi)}=\sqrt{Re(\omega)}+\sqrt{Re(\zeta)}.
\end{align*}
Here we use that $\sqrt{\cos(\theta)}\leq\cos(\theta/2)$ for $\theta\in(-\pi/2,\pi/2)$.  Using these properties, we bound $h_\epsilon$
\begin{align*}
|h_\epsilon&(x,y_2)|=\epsilon\left|\frac{1}{\(g_\epsilon^2+g_{\epsilon y_2}^2\)^{1/2}}-\frac{1}{\(g_{-\epsilon}^2+g_{\epsilon y_2}^2\)^{1/2}}\right|\\
&=\epsilon\frac{\left|g_{-\epsilon}^2-g_\epsilon^2\right|}{\left|g_{-\epsilon}^2+g_{\epsilon y_2}^2\right|^{1/2}\left|g_\epsilon^2+g_{\epsilon y_2}^2\right|^{1/2}}\frac{1}{\left|\(g_{\epsilon}^2+g_{\epsilon y_2}^2\)^{1/2}+\(g_{-\epsilon}^2+g_{\epsilon y_2}^2\)^{1/2}\right|}\\
&\leq\epsilon\frac{|g_{-\epsilon}|^2+|g_\epsilon|^2}{Re\(g_{-\epsilon}^2+g_{\epsilon y_2}^2\)^{1/2}Re\(g_\epsilon^2+g_{\epsilon y_2}^2\)^{1/2}}\frac{1}{\[Re\(g_{\epsilon}^2+g_{\epsilon y_2}^2\)\]^{1/2}+Re\[\(g_{-\epsilon}^2+g_{\epsilon y_2}^2\)\]^{1/2}}\\
&\leq\frac{\epsilon}{(1-\lambda)}\frac{2(2\epsilon)^2}{\epsilon^2+(\epsilon y_2)^2}\frac{1}{2(1-\lambda)^{1/2}\(\epsilon^2+(\epsilon y_2)^2\)^{1/2}}\\
&\less\frac{1}{(1-\lambda)^{3/2}}\frac{1}{(1+|y_2|)^3}.
\end{align*}
Therefore $|h_\epsilon(x,y_2)|\less(1+|y_2|)^{-3}$, and we can apply dominated convergence to \eqref{IIe}.  Hence $II_\epsilon\rightarrow0$ as $\epsilon\rightarrow0$.\\

\noindent\underline{$III_\epsilon\rightarrow0$}:  For this term, we use the regularity and compact support of $ f_1$ to directly bound 
\begin{align*}
| III_\epsilon |&\less\int_{\epsilon<|x-y_2|<|x|+R_0+\epsilon}|\widetilde F_{\Gamma}(x,x+\epsilon,y_2)|\;| f_1(x-\epsilon)- f_1(x+\epsilon)| f_2(y_2)|dy_2\\
&\leq\frac{1}{(1-\lambda)^{1/2}}\int_{\epsilon<|x-y_2|<|x|+R_0+\epsilon}\frac{1}{|x-y_2|}(2\epsilon|| f_1'||_{L^\infty})|| f_2||_{L^\infty}dy_2\\
&\less\frac{||f_1'||_{L^\infty}||f_2||_{L^\infty}}{(1-\lambda)^{1/2}}\epsilon|\log(|x|+R_0+\epsilon)-\log(\epsilon)|.
\end{align*}
Recall we chose $R_0>0$ such that $\supp(f_j)\subset B(0,R_0)$ for $j=1,2$.  Hence $III_\epsilon\rightarrow0$ as $\epsilon\rightarrow0$, and so
\begin{align*}
\lim_{\epsilon\rightarrow0}C_\epsilon(x)=C(x)=-\int_{\R^2}\widetilde F_\Gamma(x,y_1,y_2) f_1'(y_1) f_2(y_2)\gamma'(x)\gamma'(y_2)dy_1\,dy_2,
\end{align*}
which is an absolutely convergent integral.  This verifies the absolutely convergent integral representation for $\widetilde C_{\Gamma,1}$ in Proposition \ref{p:welldefined1}.  It also follows from our estimate of $I_\epsilon$ that $C_\epsilon(x)$ is bounded uniformly in $x$; hence for $ f_0\in C_0^\infty(\R)$ and $\epsilon>0$
\begin{align*}
|C_\epsilon(x) f_0(x)\gamma'(x)|\less (1-\lambda)^{-1/2}||f_1'||_{L^\infty}||f_2||_{L^\infty}R_0| f_0(x)|,
\end{align*}
and by dominated convergence
\begin{align*}
-\int_{\R^3}\widetilde F_\Gamma(x,y_1,y_2)f_0(x) f_1'(y_1) f_2(y_2)\gamma'(x)\gamma'(y_2)dx\,dy_1\,dy_2&=\lim_{\epsilon\rightarrow0}\int_\R C_\epsilon(x)\gamma'(x) f_0(x)dx\\
&\hspace{-2.5cm}=\int_{\R}C(x) f_0(x)\gamma'(x)dx=\<\widetilde C_{\Gamma,1}(\gamma' f_1,\gamma' f_2),\gamma' f_0\>.
\end{align*}
Furthermore since the bounds of $I_\epsilon$, $II_\epsilon$, and $III_\epsilon$ are in terms of $||f_j||_{L^\infty}$, $||f_j'||_{L^\infty}$, and $R_0$ for $j=0,1,2$ it follows that $\widetilde C_{\Gamma,1}$ is continuous from $\gamma'C_0^1(\R)\times\gamma'C_0^1(\R)$ into $(\gamma'C_0^1(\R))'$.  By symmetry, the properties of $\widetilde C_{\Gamma,2}$ follow as well.  Also
\begin{align*}
\<\widetilde C_{\Gamma,0}(\gamma'f_1,\gamma'f_2),\gamma'f_0\>&\\
&\hspace{-3cm}=\int_\R\lim_{\epsilon\rightarrow0}\int_{|x-y_1|,|x-y_2|>\epsilon}\widetilde K_{\Gamma,0}(x,y_1,y_2)f_0(x)f_1(y_1)f_2(y_2)\gamma'(x)\gamma'(y_1)\gamma'(y_2)dy_1\,dy_2\,dx\\
&\hspace{-3cm}=\int_\R\lim_{\epsilon\rightarrow0}\int_{|x-y_1|,|x-y_2|>\epsilon}\partial_x\widetilde F_{\Gamma}(x,y_1,y_2)f_0(x)f_1(y_1)f_2(y_2)\gamma'(y_1)\gamma'(y_2)dy_1\,dy_2\,dx\\
&\hspace{-3cm}=\int_\R\lim_{\epsilon\rightarrow0}\int_{|x-y_1|,|x-y_2|>\epsilon}\partial_{y_1}\widetilde F_{\Gamma}(x,y_1,y_2)f_0(x)f_1(y_1)f_2(y_2)\gamma'(y_1)\gamma'(y_2)dy_1\,dy_2\,dx\\
&\hspace{-2.5cm}+\int_\R\lim_{\epsilon\rightarrow0}\int_{|x-y_1|,|x-y_2|>\epsilon}\partial_{y_2}\widetilde F_{\Gamma}(x,y_1,y_2)f_0(x)f_1(y_1)f_2(y_2)\gamma'(y_1)\gamma'(y_2)dy_1\,dy_2\,dx\\
&\hspace{-3cm}=\<\widetilde C_{\Gamma,1}(\gamma'f_1,\gamma'f_2),\gamma'f_0\>+\<\widetilde C_{\Gamma,2}(\gamma'f_1,\gamma'f_2),\gamma'f_0\>
\end{align*}
By the absolutely integrable representations of $\widetilde C_{\Gamma,1}$ and $\widetilde C_{\Gamma,2}$, it follows that
\begin{align*}
\<\widetilde C_{\Gamma,0}(\gamma'f_1,\gamma'f_2),\gamma'f_0\>&=\<\widetilde C_{\Gamma,1}(\gamma'f_1,\gamma'f_2),\gamma'f_0\>+\<\widetilde C_{\Gamma,2}(\gamma'f_1,\gamma'f_2),\gamma'f_0\>\\
&\hspace{-2cm}=-\int_{\R^3}\widetilde F_\Gamma(x,y_1,y_2)f_0(x)f_1'(y_1)f_2(y_2)\gamma'(x)\gamma'(y_2)dx\,dy_1\,dy_2\\
&\hspace{-1.5cm}-\int_{\R^3}\widetilde F_\Gamma(x,y_1,y_2)f_0(x)f_1(y_1)f_2'(y_2)\gamma'(x)\gamma'(y_1)dx\,dy_1\,dy_2
\end{align*}
\begin{align*}
&\hspace{0cm}=\lim_{\epsilon\rightarrow0}\int_{|x-y_1|,|x-y_2|>\epsilon}\partial_{y_1}\widetilde F_\Gamma(x,y_1,y_2)f_0(x)f_1(y_1)f_2(y_2)\gamma'(x)\gamma'(y_2)dx\,dy_1\,dy_2\\
&\hspace{.5cm}+\int_{|x-y_1|,|x-y_2|>\epsilon}\partial_{y_2}\widetilde F_\Gamma(x,y_1,y_2)f_0(x)f_1(y_1)f_2(y_2)\gamma'(x)\gamma'(y_1)dx\,dy_1\,dy_2\\
&\hspace{.5cm}+\int_{|x-y_2|>\epsilon}\widetilde F_\Gamma(x,x+\epsilon,y_2)f_0(x)f_1(x+\epsilon)f_2(y_2)\gamma'(x)\gamma'(y_2)dx\,dy_2\\
&\hspace{.5cm}-\int_{|x-y_2|>\epsilon}\widetilde F_\Gamma(x,x-\epsilon,y_2)f_0(x)f_1(x-\epsilon)f_2(y_2)\gamma'(x)\gamma'(y_2)dx\,dy_2\\
&\hspace{.5cm}+\int_{|x-y_1|>\epsilon}\widetilde F_\Gamma(x,y_1,x+\epsilon)f_0(x)f_1(y_1)f_2(x+\epsilon)\gamma'(x)\gamma'(y_1)dx\,dy_1\\
&\hspace{.5cm}-\int_{|x-y_1|>\epsilon}\widetilde F_\Gamma(x,y_1,x-\epsilon)f_0(x)f_1(y_1)f_2(x-\epsilon)\gamma'(x)\gamma'(y_1)dx\,dy_1\\
&\hspace{0cm}=\lim_{\epsilon\rightarrow0}\int_{|x-y_1|,|x-y_2|>\epsilon}\partial_x\widetilde F_\Gamma(x,y_1,y_2)f_0(x)f_1(y_1)f_2(y_2)\gamma'(y_1)\gamma'(y_2)dx\,dy_1\,dy_2.
\end{align*}
The boundary terms of the integration by parts here (the last 4 terms) tend to zero as $\epsilon\rightarrow0$ in the same way as they did for $II_\epsilon$ and $III_\epsilon$ above.  Now we will integrate by parts one more time in $x$ here to obtain an integral representation for $C_{\Gamma,0}$:
\begin{align*}
&\lim_{\epsilon\rightarrow0}\int_{|x-y_1|,|x-y_2|>\epsilon}\partial_x\widetilde F_\Gamma(x,y_1,y_2)f_0(x)f_1(y_1)f_2(y_2)\gamma'(y_1)\gamma'(y_2)dx\,dy_1\,dy_2\\
&\hspace{0cm}=\lim_{\epsilon\rightarrow0}-\int_{|x-y_1|,|x-y_2|>\epsilon}\widetilde F_\Gamma(x,y_1,y_2)f_0'(x)f_1(y_1)f_2(y_2)\gamma'(y_1)\gamma'(y_2)dx\,dy_1\,dy_2\\
&\hspace{.5cm}+\int_{|y_1-y_2-\epsilon|>\epsilon}\widetilde F_\Gamma(y_1-\epsilon,y_1,y_2)f_0(y_1-\epsilon)f_1(y_1)f_2(y_2)\gamma'(y_1)\gamma'(y_2)dy_1\,dy_2\\
&\hspace{.5cm}-\int_{|y_1-y_2+\epsilon|>\epsilon}\widetilde F_\Gamma(y_1+\epsilon,y_1,y_2)f_0(y_1+\epsilon)f_1(y_1)f_2(y_2)\gamma'(y_1)\gamma'(y_2)dy_1\,dy_2\\
&\hspace{.5cm}+\int_{|y_1-y_2+\epsilon|>\epsilon}\widetilde F_\Gamma(y_2-\epsilon,y_1,y_2)f_0(y_2-\epsilon)f_1(y_1)f_2(y_2)\gamma'(y_1)\gamma'(y_2)dy_1\,dy_2\\
&\hspace{.5cm}-\int_{|y_1-y_2-\epsilon|>\epsilon}\widetilde F_\Gamma(y_2+\epsilon,y_1,y_2)f_0(y_2+\epsilon)f_1(y_1)f_2(y_2)\gamma'(y_1)\gamma'(y_2)dy_1\,dy_2\\
&\hspace{0cm}=\lim_{\epsilon\rightarrow0}-\int_{|x-y_1|,|x-y_2|>\epsilon}\widetilde F_\Gamma(x,y_1,y_2)f_0'(x)f_1(y_1)f_2(y_2)\gamma'(y_1)\gamma'(y_2)dx\,dy_1\,dy_2\\
&\hspace{.5cm}+\int_{|y_1-y_2|>\epsilon}\widetilde F_\Gamma(y_1,y_1+\epsilon,y_2)f_0(y_1)f_1(y_1+\epsilon)f_2(y_2)\gamma'(y_1)\gamma'(y_2)dy_1\,dy_2\\
&\hspace{.5cm}-\int_{|y_1-y_2|>\epsilon}\widetilde F_\Gamma(y_1,y_1-\epsilon,y_2)f_0(y_1)f_1(y_1-\epsilon)f_2(y_2)\gamma'(y_1)\gamma'(y_2)dy_1\,dy_2\\
&\hspace{.5cm}+\int_{|y_1-y_2|>\epsilon}\widetilde F_\Gamma(y_2,y_1,y_2+\epsilon)f_0(y_2)f_1(y_1)f_2(y_2+\epsilon)\gamma'(y_1)\gamma'(y_2)dy_1\,dy_2\\
&\hspace{.5cm}-\int_{|y_1-y_2|>\epsilon}\widetilde F_\Gamma(y_2,y_1,y_2-\epsilon)f_0(y_2)f_1(y_1)f_2(y_2-\epsilon)\gamma'(y_1)\gamma'(y_2)dy_1\,dy_2\\
&=-\int_{\R^3}\widetilde F_\Gamma(x,y_1,y_2)f_0'(x)f_1(y_1)f_2(y_2)\gamma'(y_1)\gamma'(y_2)dx\,dy_1\,dy_2.
\end{align*}
Once again we use the same argument for the $II_\epsilon$ and $III_\epsilon$ terms to verify that these boundary terms (the last 4 terms) tend to zero as $\epsilon\rightarrow0$.  Then the pairing identity for $\widetilde C_{\Gamma,0}$ holds as well.  This completes the proof of Proposition \ref{p:welldefined1}.
\end{proof}

In the next proposition we extend $\widetilde C_\Gamma$ and $C_\Gamma$ to product Lebesgue spaces.

\begin{proposition}\label{p:application1}
Let $L$ be a Lipschitz function with Lipschitz constant $\lambda<1$ such that for almost every $x\in\R$ the limits
\begin{align*}
&\lim_{\epsilon\rightarrow0^+}\gamma'(x+\epsilon)=\gamma'(x+)& &\text{and}& &\lim_{\epsilon\rightarrow0^+}\gamma'(x-\epsilon)=\gamma'(x-)&
\end{align*}
exist.  If $L$ id differentiable off of some compact set and there exists $c_0\in\R$ such that 
\begin{align*}
\lim_{|x|\rightarrow\infty}L'(x)=c_0,
\end{align*}
then $\widetilde C_{\Gamma,j}$ is bounded $L^{p_1}(\R)\times L^{p_2}(\R)$ into $L^p(\R)$ for all $1<p_1,p_2<\infty$ satisfying \eqref{Holder} for each $j=0,1,2$.  Furthermore, $ C_{\Gamma,j}$ is bounded $L^{p_1}(\Gamma)\times L^{p_2}(\Gamma)$ into $L^p(\Gamma)$ for all $1<p_1,p_2<\infty$ satisfying \eqref{Holder} for each $j=0,1,2$.
\end{proposition}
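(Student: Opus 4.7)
The plan is to verify the hypotheses of the bilinear Tb theorem (Theorem \ref{t:fullTb}) with $b_0=b_1=b_2=\gamma'$, then transfer the resulting $L^p(\R)$ bounds to $L^p(\Gamma)$ via the Lipschitz parametrization. First I would observe that $\gamma'$ is accretive (hence para-accretive): since $\gamma'(x)=1+iL'(x)$, we have $1\leq|\gamma'(x)|\leq\sqrt{1+\lambda^2}$, so $1/\gamma'\in L^\infty$, and for every interval $Q$, $\left|\int_Q\gamma'\right|\geq Re\int_Q\gamma'=|Q|$. Next, the kernel $\widetilde K_{\Gamma,j}(x,y_1,y_2)=K_{\Gamma,j}(\gamma(x),\gamma(y_1),\gamma(y_2))$ inherits the standard bilinear Calder\'on-Zygmund kernel bounds from $K_{\Gamma,j}$, using the key inequality
\begin{align*}
\left|(\gamma(x)-\gamma(y_1))^2+(\gamma(x)-\gamma(y_2))^2\right|\geq (1-\lambda^2)(|x-y_1|^2+|x-y_2|^2)
\end{align*}
provided by the small Lipschitz constant $\lambda<1$, together with $|\gamma(x)-\gamma(y)|\leq\sqrt{1+\lambda^2}|x-y|$. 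Continuity of $\widetilde C_{\Gamma,j}$ from $\gamma'C_0^\delta\times\gamma'C_0^\delta$ into $(\gamma'C_0^\delta)'$ is exactly the content of Proposition \ref{p:welldefined1}.

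For the weak boundedness property I would use the absolutely convergent integral representations in Proposition \ref{p:welldefined1}. For $\widetilde C_{\Gamma,1}$ with normalized bumps $\phi_0,\phi_1,\phi_2$ of order at least $1$ centered at a common point $x$ at scale $R$, $|(\phi_1^{x,R})'|\less R^{-1}$ and $|\widetilde F_\Gamma|\less(|u-y_1|+|u-y_2|)^{-1}$ combine to yield
\begin{align*}
\left|\<\widetilde C_{\Gamma,1}(\gamma'\phi_1^{x,R},\gamma'\phi_2^{x,R}),\gamma'\phi_0^{x,R}\>\right|\less\frac{1}{R}\int_{B(x,R)^3}\frac{du\,dy_1\,dy_2}{|u-y_1|+|u-y_2|}\less R,
\end{align*}
which is exactly $R^n$ for $n=1$. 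The $\widetilde C_{\Gamma,2}$ case is symmetric, and $\widetilde C_{\Gamma,0}$ is handled via the decomposition $\widetilde C_{\Gamma,0}=\widetilde C_{\Gamma,1}+\widetilde C_{\Gamma,2}$ obtained by comparing the three pairing identities in Proposition \ref{p:welldefined1}.

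The main obstacle will be verifying the six Tb testing conditions; the key observation is that all six vanish identically in $BMO$. The four conditions $M_{\gamma'}\widetilde C_{\Gamma,1}(\gamma',\gamma')$, $M_{\gamma'}\widetilde C_{\Gamma,1}^{*2}(\gamma',\gamma')$, $M_{\gamma'}\widetilde C_{\Gamma,2}^{*1}(\gamma',\gamma')$, $M_{\gamma'}\widetilde C_{\Gamma,2}(\gamma',\gamma')$ are zero because the integral representations of $\widetilde C_{\Gamma,1}$ and $\widetilde C_{\Gamma,2}$ carry a factor $f_1'$ or $f_2'$ that disappears on the constant function. Rigorously, applying Definition \ref{d:Tb} with cutoffs $\eta_R^i$, the surviving expression for $M_{\gamma'}\widetilde C_{\Gamma,1}(\gamma',\gamma')$ paired with a mean-zero $\gamma' f_0$ is
\begin{align*}
-\int \widetilde F_\Gamma(x,y_1,y_2)\,f_0(x)(\eta_R^1)'(y_1)\eta_R^2(y_2)\,\gamma'(x)\gamma'(y_2)\,dx\,dy_1\,dy_2,
\end{align*}
which is $O(R_0/R)$ once one uses $\int f_0\gamma'=0$ to subtract $\widetilde F_\Gamma(0,y_1,y_2)$ and applies the regularity estimate $|\widetilde F_\Gamma(x,y_1,y_2)-\widetilde F_\Gamma(0,y_1,y_2)|\less|x|/(|y_1|+|y_2|)^2$ on the annular support $R<|y_1|<2R$ of $(\eta_R^1)'$. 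The remaining two conditions $M_{\gamma'}\widetilde C_{\Gamma,1}^{*1}(\gamma',\gamma')$ and $M_{\gamma'}\widetilde C_{\Gamma,2}^{*2}(\gamma',\gamma')$ are reduced to the preceding four via the algebraic identity $\widetilde C_{\Gamma,1}=\widetilde C_{\Gamma,0}-\widetilde C_{\Gamma,2}$ (and its adjoint analog), together with $M_{\gamma'}\widetilde C_{\Gamma,0}^{*k}(\gamma',\gamma')=0$ for $k=1,2$, which follow at once from the $f_0'$ factor in the integral representation of $\widetilde C_{\Gamma,0}$. The asymptotic hypothesis $L'(x)\to c_0$ is what guarantees the absolute convergence at infinity required to carry out these manipulations for $\widetilde C_{\Gamma,0}$.

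With every hypothesis of Theorem \ref{t:fullTb} verified, each $\widetilde C_{\Gamma,j}$ extends to a bounded bilinear operator from $L^{p_1}(\R)\times L^{p_2}(\R)$ into $L^p(\R)$ for all H\"older triples $1<p,p_1,p_2<\infty$ satisfying \eqref{Holder}. The transfer to $L^p(\Gamma)$ is then routine: since $|\gamma'|$ is bounded above and below by positive constants, the norms $||g||_{L^p(\Gamma)}$ and $||g\circ\gamma||_{L^p(\R)}$ are comparable, and the change-of-variables identity
\begin{align*}
C_{\Gamma,j}(g_1,g_2)(\gamma(x))=\widetilde C_{\Gamma,j}(\gamma'(g_1\circ\gamma),\gamma'(g_2\circ\gamma))(x)
\end{align*}
immediately converts the $L^p(\R)$ bounds on $\widetilde C_{\Gamma,j}$ into the desired bilinear $L^p(\Gamma)$ bounds on $C_{\Gamma,j}$.
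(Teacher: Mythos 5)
Your proof is correct, and it follows the paper's overall strategy in every structural respect: verify that $\gamma'$ is (para-)accretive, that $\widetilde K_{\Gamma,j}$ is a standard bilinear kernel using the coercivity supplied by $\lambda<1$, that $\widetilde C_{\Gamma,j}$ is continuous on the H\"older spaces and satisfies WBP via the absolutely convergent representations of Proposition \ref{p:welldefined1}, verify the BMO testing conditions, invoke Theorem \ref{t:fullTb}, and transfer to $L^p(\Gamma)$ via the change of variables. You also employ the same algebraic identity $\widetilde C_{\Gamma,1}=\widetilde C_{\Gamma,0}-\widetilde C_{\Gamma,2}$ that the paper uses to treat the ``hard'' transpose conditions, so that every testing condition reduces to an integral carrying an $(\eta_R)'$ factor with annular support.

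Where you genuinely diverge is in how you show these integrals vanish. The paper rescales the integration variables, computes the pointwise limit $\lim_{R\to\infty}R\widetilde F_\Gamma(x,Ry_1,Ry_2)\gamma'(Ry_2)=(y_1^2+y_2^2)^{-1/2}$ by L'Hospital (this is exactly where the hypothesis that $L'$ is differentiable off a compact set and $L'(x)\to c_0$ enters), then applies dominated convergence to identify the limit as a constant times $\int\gamma'\phi=0$. You instead exploit $\int\gamma'f_0=0$ to subtract $\widetilde F_\Gamma(0,y_1,y_2)$ from the integrand and then bound what remains by the Calder\'on--Zygmund regularity estimate $|\widetilde F_\Gamma(x,y_1,y_2)-\widetilde F_\Gamma(0,y_1,y_2)|\less|x|/(|y_1|+|y_2|)^2$ valid on the annular support, getting $O(R_0/R)$ directly. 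This is more elementary, avoids the pointwise-limit computation altogether, and (as you can check) never uses the asymptotic hypothesis $L'(x)\to c_0$. Your closing remark that this hypothesis ``guarantees the absolute convergence at infinity required to carry out these manipulations for $\widetilde C_{\Gamma,0}$'' is the one misstep: the absolute-convergence representations of Proposition \ref{p:welldefined1} rely only on $\lambda<1$, and $\widetilde C_{\Gamma,0}=\widetilde C_{\Gamma,1}+\widetilde C_{\Gamma,2}$ comes for free. In fact your argument proves the proposition under weaker hypotheses than the paper states; the asymptotic condition on $L'$ is an artifact of the paper's particular method of computing the limit.
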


\begin{proof}
We will apply Theorem \ref{t:fullTb} to $\widetilde C_{\gamma,1}$ with $b_0=b_1=b_2=\gamma'$.  Note that $\gamma'$ is para-accretive since $Re(\gamma')=1$ and $\gamma'\in L^\infty$.  It is not hard to see that $\widetilde K_{\Gamma,1}$ is the kernel function associated to $\widetilde C_{\Gamma,1}$.  It also follows from $||L'||_{L^\infty}=\lambda<1$ that $\widetilde K_{\Gamma,1}$ is a standard bilinear kernel:
\begin{align*}
|\widetilde K_{\Gamma,1}(x,y_1,y_2)|&\leq\frac{1}{(1-\lambda)^{3/2}}\frac{|\gamma(x)-\gamma(y_1)|}{|(x-y_1)^2+(x-y_2)^2|^{3/2}}\leq\frac{1}{(1-\lambda)^{3/2}}\frac{||\gamma'||_{L^\infty}}{(x-y_1)^2+(x-y_2)^2},
\end{align*}
and
\begin{align*}
\partial_{y_2}\widetilde K_{\Gamma,1}(x,y_1,y_2)&\leq\frac{3}{(1-\lambda)^{5/2}}\frac{|\gamma(x)-\gamma(y_1)|\,|\gamma(x)-\gamma(y_2)|\,|\gamma'(y_2)|}{((x-y_1)^2+(x-y_2)^2)^{5/2}}\\
&\leq\frac{3}{(1-\lambda)^{5/2}}\frac{||\gamma'||_{L^\infty}^3}{((x-y_1)^2+(x-y_2)^2)^{3/2}}.
\end{align*}
A simliar estimate holds for $\partial_{x}\widetilde K_{\Gamma,1}(x,y_1,y_2)$ and $\partial_{y_1}\widetilde K_{\Gamma,1}(x,y_1,y_2)$, which implies that $\widetilde K_{\Gamma,1}(x,y_1,y_2)$ is a standard bilinear kernel.  Now it remains to verify that $\widetilde C_{\Gamma,1}$ satisfies the WBP and the BMO testing conditions for $b_0=b_1=b_2=\gamma'$.  Let $\phi_0,\phi_1,\phi_2$ be normalized bumps of order $1$, $u\in\R$, and $R>0$.  By Proposition \ref{p:welldefined1}, we have
\begin{align*}
&\left|\<\widetilde C_{\Gamma,1}(\gamma'\phi_1^{u,R},\gamma'\phi_2^{u,R}),\gamma'\phi_0^{u,R}\>\right|\\
&\hspace{1cm}\leq\int_{\R^3}|\widetilde F_{\Gamma}(x,y_1,y_2)\phi_0^{u,R}(x)(\phi_1^{u,R})'(y_1)\phi_2^{u,R}(y_2)\gamma'(x)\gamma'(y_2)|dx\,dy_1\,dy_2\\
&\hspace{1cm}\less\frac{1}{R(1-\lambda)^{1/2}}\int_{[R,R]^3}\frac{dx\,dy_1\,dy_2}{|x-y_1|+|x-y_2|}\less \frac{R}{(1-\lambda)^{1/2}}
\end{align*}
So $\widetilde C_{\Gamma,1}$ satisfies the WBP.  Now we check the three $BMO$ conditions of Theorem \ref{t:fullTb}:\\

\noindent\underline{$M_{\gamma'}\widetilde C_{\Gamma,1}(\gamma',\gamma')=0\in BMO$}\,:  Let $\phi\in C_0^\infty(\R)$ such that $\gamma'\phi$ has mean zero and $\eta\in C_0^\infty(\R)$ such that $0\leq\eta\leq1$, $\eta=1$ on $[-1,1]$, $\supp(\eta)\subset[-2,2]$, and $\eta_R(x)=\eta(x/R)$.  Again we use Proposition \ref{p:welldefined1} and make a change of variables,
\begin{align}
\<\widetilde C_{\Gamma,1}(\gamma'\eta_R,\gamma'\eta_R),\gamma'\phi\>&=-\int_{\R^3}\widetilde F_{\Gamma}(x,y_1,y_2)(\eta_R)'(y_1)\gamma'(y_2)\eta_R(y_2)\gamma'(x)\phi(x)dy_1\,dy_2\,dx\notag\\
&=-\int_{\R^3}R\widetilde F_{\Gamma}(x,Ry_1,Ry_2)\phi(x)\eta'(y_1)\eta(y_2)\gamma'(x)\gamma'(Ry_2)dx\,dy_1dy_2.\label{C(1)}
\end{align}
Then for $y_1,y_2\neq0$, we can compute the pointwise limit
\begin{align*}
\lim_{R\rightarrow\infty}R\widetilde F_{\Gamma}(x,Ry_1,Ry_2)\gamma'(Ry_2)&=\lim_{R\rightarrow\infty}\frac{R\gamma'(Ry_2)}{((\gamma(x)-\gamma(Ry_1))^2+(\gamma(x)-\gamma(Ry_2))^2)^{1/2}}\\
&=\lim_{R\rightarrow\infty}\frac{\gamma'(Ry_2)}{\(y_1^2\frac{(\gamma(x)-\gamma(Ry_1))^2}{(Ry_1)^2}+y_2^2\frac{(\gamma(x)-\gamma(Ry_2))^2}{(Ry_2)^2}\)^{1/2}}\\
&=\frac{1+ic_0}{\(y_1^2(1+ic_0)^2+y_2^2(1+ic_0)^2\)^{1/2}}=\frac{1}{\(y_1^2+y_2^2\)^{1/2}}.
\end{align*}
Here we use that $L$ is differentiable off of a compact set, that $L'(x)\rightarrow c_0$ as $|x|\rightarrow\infty$, and L'Hospital's rule to conclude that $L(x)/x\rightarrow c_0$ as $|x|\rightarrow\infty$.  Now let $R>0$ be large enough so that $\supp(\phi)\subset B(0,R/4)$, and using that $\supp(\eta')\subset[-2,2]\backslash[-1,1]$, we have the estimate
\begin{align*}
|R\widetilde F_{\Gamma}(x,Ry_1,Ry_2)\gamma'(x)\gamma'(Ry_2)\phi(x)\eta'(y_1)\eta(y_2)|&\less\frac{1}{(1-\lambda)^{1/2}}\frac{R|\phi(x)\eta'(y_1)|}{|x-Ry_1|+|x-Ry_2|}\\
&\hspace{-3.75cm}\leq\frac{1}{(1-\lambda)^{1/2}}\frac{R|\phi(x)\eta'(y_1)|}{R|y_1|/2+R/2-2|x|+R|y_2|}\less\frac{1}{(1-\lambda)^{1/2}}\frac{1}{|y_1|+|y_2|}.
\end{align*}
Then by dominated convergence
\begin{align*}
\lim_{R\rightarrow\infty}\<\widetilde C_{\Gamma,1}(\gamma'\eta_R,\gamma'\eta_R),\gamma'\phi\>&=\int_\R\(-\int_{\R^2}\frac{\eta'(y_1)\eta(y_2)}{\(y_1^2+y_2^2\)^{1/2}}dy_1dy_2\)\phi(x)\gamma'(x)dx=0.
\end{align*}
Here we also use that $\gamma'\phi$ has mean zero.  Therefore $M_{\gamma'}\widetilde C_{\Gamma,1}(\gamma',\gamma')=0\in BMO$ in the sense of Definition \ref{d:Tb}.\\

\noindent\underline{$M_{\gamma'}\widetilde C_\Gamma^{*1}(\gamma',\gamma')=0\in BMO$}\,:  Note that for every $x,y_1,y_2\in\R$ such that $|x-y_1|+|x-y_2|\neq0$ we can write
\begin{align*}
\widetilde K_{\Gamma,1}(x,y_1,y_2)\gamma'(x)\gamma'(y_1)\gamma'(y_2)&=\partial_{y_1}\widetilde F_{\Gamma,1}(x,y_1,y_2)\gamma'(x)\gamma'(y_2)\\
&=\partial_{x}\widetilde F_{\Gamma,1}(x,y_1,y_2)\gamma'(y_1)\gamma'(y_2)-\partial_{y_2}\widetilde F_{\Gamma,1}(x,y_1,y_2)\gamma'(x)\gamma'(y_1)\\
&=(\widetilde K_{\Gamma,0}(x,y_1,y_2)-\widetilde K_{\Gamma,2}(x,y_1,y_2))\gamma'(x)\gamma'(y_1)\gamma'(y_2).
\end{align*}
Then it follows that $M_{\gamma'}\widetilde C_{\Gamma,1}(M_{\gamma'}\,\cdot,M_{\gamma'}\,\cdot)=M_{\gamma'}\widetilde C_{\Gamma,0}(M_{\gamma'}\,\cdot,M_{\gamma'}\,\cdot)-M_{\gamma'}\widetilde C_{\Gamma,2}(M_{\gamma'}\,\cdot,M_{\gamma'}\,\cdot)$, and so by Proposition \ref{p:welldefined1}
\begin{align*}
&\<\widetilde C_{\Gamma,1}^{*1}(\gamma'\eta_R,\gamma'\eta_R),\gamma'\phi\>=\<\widetilde C_{\Gamma,1}(\gamma'\phi,\gamma'\eta_R),\gamma'\eta_R\>\\
&\hspace{3cm}=\<\widetilde C_{\Gamma,0}(\gamma'\phi,\gamma'\eta_R),\gamma'\eta_R\>-\<\widetilde C_{\Gamma,2}(\gamma'\phi,\gamma'\eta_R),\gamma'\eta_R\>\\
&\hspace{3cm}=-\int_{\R^3}\widetilde F_{\Gamma}(x,y_1,y_2)\gamma'(y_1)\phi(y_1)\gamma'(y_2)\eta_R(y_2)(\eta_R)'(x)dy_1\,dy_2\,dx\\
&\hspace{3.5cm}+\int_{\R^3}\widetilde F_{\Gamma}(x,y_1,y_2)\gamma'(y_1)\phi(y_1)(\eta_R)'(y_2)\gamma'(x)\eta_R(x)dy_1\,dy_2\,dx.
\end{align*}
These two expressions tend to zero by the same argument that \eqref{C(1)} tends to zero as $R\rightarrow\infty$ in the proof of the $M_{\gamma'}\widetilde C_{\Gamma,1}(\gamma',\gamma')=0$ condition.  Therefore $M_{\gamma'}\widetilde C_\Gamma^{*1}(\gamma',\gamma')=0\in BMO$ as well.\\  

\noindent\underline{$M_{\gamma'}\widetilde C_\Gamma^{*2}(\gamma',\gamma')=0\in BMO$}\,:  By Proposition \ref{p:welldefined1}, we can compute
\begin{align*}
&\<\widetilde C_{\Gamma,1}^{*2}(\gamma'\eta_R,\gamma'\eta_R),\gamma'\phi\>=-\int_{\R^3}\widetilde F_{\Gamma}(x,y_1,y_2)(\eta_R)'(y_1)\gamma'(x)\eta_R(x)\gamma'(y_2)\phi(y_2)dy_1\,dy_2\,dx.
\end{align*}
Again, this expression is essentially the same as the one in \eqref{C(1)}, and hence tends to zero as $R\rightarrow\infty$ by the argument.  Therefore $M_{\gamma'}\widetilde C_\Gamma^{*2}(\gamma',\gamma')=0\in BMO$.\\

Then by Theorem \ref{t:fullTb}, $\widetilde C_{\Gamma,1}$ can be extended to a bounded operator from $L^{p_1}\times L^{p_2}$ into $L^p$ for appropriate $p,p_1,p_2$.  Now it is easy to prove that $C_{\Gamma,1}$ can also be extended to a bounded operator:  Let $1<p_1,p_2<\infty$ and $1/2<p<\infty$ satisfy \eqref{Holder}.  For $g_1\in L^{p_1}(\Gamma)$ and $g_2\in L^{p_2}(\Gamma)$, and it follows that
\begin{align*}
||C_{\Gamma,1}(g_1,g_2)||_{L^p(\Gamma)}&=\(\int_{\R}|C_{\Gamma,1}(g_1,g_2)(\gamma(x))|^p|\gamma'(x)|dx\)^{1/p}\\
&\leq||\gamma'||_{L^\infty}^{1/p}||\widetilde C_{\Gamma,1}||_{p,p_1,p_2}||g_1\circ\gamma^{-1}||_{L^{p_1}(\R)}||g_2\circ\gamma^{-1}||_{L^{p_2}(\R)}\\
&=||\gamma'||_{L^\infty}^{1/p}||\gamma'^{-1}||_{L^\infty}^{1/p}||\widetilde C_{\Gamma,1}||_{p,p_1,p_2}||g_1||_{L^{p_1}(\Gamma)}||g_2||_{L^{p_2}(\Gamma)}.
\end{align*}
The bounds for $\widetilde C_{\Gamma,0}$, $\widetilde C_{\Gamma,2}$, $C_{\Gamma,0}$, and $C_{\Gamma,2}$ follow in the same way.
\end{proof}

\bibliographystyle{amsplain}

\begin{thebibliography}{10}

\bibitem{BMNT}\label{BMNT} \'A. B\'enyi, D. Maldonado, A. Nahmod, and R. H. Torres, {\it Bilinear paraproducts revisited}, Math. Nachr. {\bf 283}, 9, (2010), 1257-1276.

\bibitem{B1}\label{B1} O. V. Besov, {\it On a family of function spaces.  Embedding theorems and applications} Dokl. Aka. Nauk SSSR, {\bf126}, (1959), 1163-1165.

\bibitem{B2}\label{B2} O. V. Besov, {\it On a family of function spaces in connection with embeddings and extensions} Trudy Mat. Inst. Steklov, {\bf60}, (1961), 42-81.

\bibitem{B}\label{B} J. M. Bony, {\it Calcul symbolique et propagation des singularit{\'e}s pour les {\'e}quations aux d{\'e}riv{\'e}es partielles non lin{\'e}aires}, Ann. Sci. {\'Ec}ole Norm. Sup., {\bf14}, 2, (1981), 109-246.

\bibitem{Ca1}\label{Ca1} A. P. Calder\'on, {\it Intermediate spaces and interpolation, the complex method}, Studia Math., {\bf24}, (1964), 113-190.

\bibitem{Ca2}\label{Ca2} A. P. Calder\'on, {\it An atomic decomposition of distributions in parabolic $H^p$}, Adv. in Math., {\bf25}, (1977), 216-225.

\bibitem{C}\label{C} L. Carleson, {\it An Interpolation Problem for Bounded Analytic Functions}, American Journal of Mathematics, {\bf80}, (1958), 921-930.

\bibitem{Chr}\label{Chr} M. Christ, {\it A T(b) theorem with remarks on analytic capacity and the Cauchy integral}, Colloq. Math. \textbf{60/61},(1990), no. 2, 601-628.

\bibitem{CJ}\label{CJ} M. Christ, J. L. Journ\'e, {\it Polynomial growth estimates for multilinear singular integral operators}, Acta Math., {\bf159}, 1-2, (1987), 51-80.

\bibitem{Coi}\label{Coi} R. Coifman, {\it Adapted multi resolution analysis, computation, signal processing and operator theory}, Proceedings of the International Congress of Mathematicians, Vol. I, II, (Kyoto, 1990), 879-887, Math. Soc. Japan, Tokyo, 1991.

\bibitem{CM1}\label{CM1} R. Coifman and Y. Meyer, {\it Au del\`a des op\'erateurs pseudodiff\'erentiels}, Ast\'erisque, 57, Soci\'et\'e Math\'ematique de France, (1978).

\bibitem{DJ}\label{DJ} G. David and J.-L. Journ\'e, {\it A boundedness criterion for generalized Calder\'on-Zygmund operators}, Ann. of Math. (2), {\bf 120}, 2, (1984), 371-397.

\bibitem{DJS}\label{DJS} G. David, J.-L. Journ\'e, and S. Semmes, {\it Op\'erateurs de Calder\'on-Zygmund, fonctions para-accr\'etives et Interpolation}, Rev. Mat. Iberoamericana, \textbf{1}, 4, (1985), 1-56.

\bibitem{FS1}\label{FS1} C. Fefferman, E. Stein, {\it Some maximal inequalities}, Amer. J. Math., {\bf 93}, (1971), 107-115.

\bibitem{FJ}\label{FJ} M. Frazier and B. Jawerth, {\it A discrete transform and decompositions of distribution spaces}, J. Funct. Anal., {\bf 93}, 1, (1990), 34-170.

\bibitem{FJW}\label{FJW} M. Frazier, B. Jawerth, G. Weiss, {\it Littlewood-Paley theory and the study of function spaces}.  CBMS Regional Conference Series in Mathematics, {\bf 79}, AMS, Providence, (1991).

\bibitem{G}\label{G} L. Grafakos, {\it Classical and Modern Fourier Analysis}, Pearson/Prentice Hall, 2004.

\bibitem{GLMY}\label{GLMY} L. Grafakos, L. Liu, D. Maldonado, D. Yang {\it Multilinear anlaysis on metric spaces}, (preprint).

\bibitem{GT1}\label{GT1} L. Grafakos and R. H. Torres, {\it Multilinear Calder\'on-Zygmund theory}, Adv. Math., {\bf165}, 1, (2002), 124-164.

\bibitem{GT2}\label{GT2} L. Grafakos and R. H. Torres, {\it Diescrete decompositions for bilinear operators and almost diagonal conditions}, Trans. Amer. Math. Soc., {\bf354}, 3, (2002).

\bibitem{Ha}\label{Ha} Y. S. Han, {\it Calder\'on-type Reproducing Formula and the Tb Theorem}, Rev. Mat. Iberoamericana, {\bf10}, 1, (1994), 51-91.

\bibitem{Hart1}\label{Hart1} J. Hart, {\it Bilinear square functions and vector-valued Calder\'on-Zygmund operators}, J. Fourier Anal. Appl., {\bf18}, 6, (2012), 1291-1313.

\bibitem{Hart1e}\label{Hart1e} J. Hart, {\it Erratum:  Bilinear square functions and vector-valued Calder\'on-Zygmund operators}, to appear J. Fourier Anal. Appl., (2013).

\bibitem{Hart2}\label{Hart2} J. Hart, {\it A new proof of the bilinear T(1) theorem}, to appear Proc. of the Amer. Math. Soc., (2012).

\bibitem{Hart3}\label{Hart3} J. Hart, {\it Bilinear Littlewood-Paley Square Functions and Singular Integrals}, Doctoral Dissertation, University of Kansas, (2013).

\bibitem{JT}\label{JT} S. Janson, M. Taibleson, {\it Calder\'on
s representation theorems}, Rend. Sem. Mat. Univ. Politec. Torino, {\bf39}, 2, (1981), 27-35.

\bibitem{J}\label{J} P. Jones, {\it Square functions, Cauchy integrals, analytic capacity, and harmonic measure}, Harmonic Analysis and Partial Differential Equations, Lecture Notes in Math., 1384, Springer-Verlag (1989).

\bibitem{KS}\label{KS} C. Kenig, E. Stein, {\it Multilinear estimates and fractional integration}, Math. Res. Lett., {\bf6}, 1, (1999), 1-15.

\bibitem{Liz}\label{Liz}  P. Lizorkin, {\it Properties of functions of the spaces $\lambda^r_{p\Theta}$}, Trudy Mat. Inst. Steklov., {\bf131}, 247, (1974), 158-181.

\bibitem{M}\label{M} D. Maldonado, {\it Multilinear Singular Integrals and Quadratic Estimates}, Doctoral Dissertation, University of Kansas, (2005).

\bibitem{MN}\label{MN} D. Maldonado, V. Naibo, {\it On the boundedness of bilinear operators on products of Besov and Lebesgue spaces}, J. Math. Anal. Appl., {\bf352}, 2, (2009), 591-603.

\bibitem{McM}\label{McM}  A. McIntosh, Y. Meyer, {\it Alg\`ebres d'op\'erateurs d\'efinis par des int\'egrales singuli\`eres}, C. R. Acad. Sci. Paris S\'er. I Math., {\bf301}, 8, (1985), 395-397.

\bibitem{N1}\label{N1}  A. Nahmod, {\it Geometry of operators and spectral analysis on spaces of homogeneous type}, C. R. Acad. Sci. Paris S\'er I Math, {\bf 313}, (1991), no. 11, 721-725.

\bibitem{N2}\label{N2}  A. Nahmod, {\it Generalized uncertainty principles on spaces of homogeneous type}, F. Funct. Anal. {\bf 119}, (1994), no. 1, 171-209.

\bibitem{P1}\label{P1} J. Peetre, {\it On convolution operators leaving $L^{p,\lambda}$ invariant}, Ann. Mat. Pura Appl. (4), {\bf72}, (1966), 295-304.

\bibitem{P2}\label{P2} J. Peetre, {\it Sur les espaces de Besov}, C.R. Acad. Sci. Paris S\'er. A-B, {\bf264}, (1967), A281-A283.

\bibitem{P3}\label{P3} J. Peetre, {\it On spaces of Triebel-Lizorkin type}, Ark. Math., {\bf13}, (1975), 123-130.

\bibitem{Sp}\label{Sp} S. Spanne, {\it Sur l'interpolation entre les espaces $\mathcal L_k^{p,\Phi}$}, Ann. Scuola Norm. Sup. Pisa (3), \textbf{20}, (1966), 625-648.

\bibitem{S2}\label{S2} E. Stein, {\it Singular integrals, harmonic functions, and differentiability properties of functions of several variable}, Proc. Sympos. Pure Math., Chicago, IL, (1967), 316-335.

\bibitem{T1}\label{T1} M. Taibleson, {\it On the theory of Lipschitz spaces of distributions on Euclidean $n$-spaces I. Principal properties}.  J. Math. Mech., 13, (1964), 407-480.

\bibitem{T2}\label{T2} M. Taibleson, {\it On the theory of Lipschitz spaces of distributions on Euclidean $n$-spaces II.  Translation invariant operators, duality, and interpolation}, J. Math. Mech., {\bf14}, (1965), 821-840.

\bibitem{T3}\label{T3} M. Taibleson, {\it On the theory of Lipschitz spaces of distributions on Euclidean $n$-spaces III.  Smoothness and integrability of Fourier transforms, smoothness of convolution kernels.}.  J. Math. Mech., {\bf15}, (1966), 973-981.

\bibitem{T}\label{T} R. H. Torres, {\it Boundedness results for operators with singular kernels on distribution spaces}, Mem. Amer. Math. Soc., {\bf90}, 442 (1991).

\bibitem{Tr1}\label{Tr1} H. Triebel, {\it Spaces of distributions of Besov type on Euclidean $n$-space.  duality, interpolation}, Ark. Math., {\bf11}, (1973), 13-64.

\bibitem{Tr2}\label{Tr2} H. Triebel, {\it Theory of function spaces}, Monographs in Mathematics, {\bf78}, Birkh\"auser-Verlag-Basel-Boston-Stutthart, (1983).

\bibitem{U}\label{U} A. Uchiyama, {\it A constructive proof of the {Fefferman-Stein} decomposition of $BMO (R^n)$}, Acta Math., {\bf148}, (1982), 215-241.

\bibitem{W2}\label{W2} M. Wilson, {\it Convergence and stability of the Calder\'on reproducing formula in $H^1$ and $BMO$}, J. Fourier Anal. Appl., {\bf17}, 5, (2011), 801-820.

\end{thebibliography}

\end{document}